\numberwithin{equation}{section}
\numberwithin{figure}{section}
\theoremstyle{plain}
\newtheorem{thm}{\protect\theoremname}[section]
  \theoremstyle{definition}
  \newtheorem{defn}[thm]{\protect\definitionname}
  \theoremstyle{definition}
  \newtheorem{example}[thm]{\protect\examplename}
  \theoremstyle{plain}
  \newtheorem{lem}[thm]{\protect\lemmaname}
  \theoremstyle{plain}
  \newtheorem{cor}[thm]{\protect\corollaryname}
  \theoremstyle{remark}
  \newtheorem*{rem*}{\protect\remarkname}
  \theoremstyle{remark}
  \newtheorem{rem}[thm]{\protect\remarkname}
\DeclareMathOperator*{\esssup}{ess\,sup}
\DeclareMathOperator*{\esssinf}{ess\,inf}
\DeclareMathOperator*{\esssliminf}{ess\,lim\,inf}
\newcommand{\Addresses}{{
  \bigskip
  \footnotesize
  \textsc{Jarkko Siltakoski,  Department of Mathematics and Statistics, P.O.Box 35, FIN-40014, University of Jyv{\"a}skyl{\"a}, Finland}\par\nopagebreak
  \textit{E-mail address}: \href{mailto:jarkko.j.m.siltakoski@student.jyu.fi}{jarkko.j.m.siltakoski@student.jyu.fi}
}}
  \providecommand{\corollaryname}{Corollary}
  \providecommand{\definitionname}{Definition}
  \providecommand{\examplename}{Example}
  \providecommand{\lemmaname}{Lemma}
  \providecommand{\remarkname}{Remark}
\providecommand{\theoremname}{Theorem}
\begin{document}
\global\long\def\d{\,d}
\global\long\def\tr{\mathrm{tr}}
\global\long\def\supp{\operatorname{spt}}
\global\long\def\div{\operatorname{div}}
\global\long\def\osc{\operatorname{osc}}
\global\long\def\essup{\esssup}
\global\long\def\aint{\dashint}
\global\long\def\essinf{\esssinf}
\global\long\def\essliminf{\esssliminf}
\global\long\def\sa{|}
\global\long\def\sgn{\operatorname{sgn}}
 \excludeversion{note}\excludeversion{note2}\excludeversion{old}\keywords{fictitious dimension, non-homogeneous equation, $p$-{L}aplacian, normalized $p$-{L}aplacian, radial solutions}\subjclass[2010]{35J92, 35J70, 35J75, 35D40, 35D30}\date{September 2019}

\title[Equivalence between radial solutions]{Equivalence between radial solutions of different non-homogeneous
$p$-Laplacian type equations}

\author{Jarkko Siltakoski}
\begin{abstract}
We study radial viscosity solutions to the equation
\[
-\left|Du\right|^{q-2}\Delta_{p}^{N}u=f(\left|x\right|)\quad\text{in }B_{R}\subset\mathbb{R}^{N},
\]
where $f\in C[0,R)$, $p,q\in(1,\infty)$ and $N\geq2$. Our main
result is that $u(x)=v(\left|x\right|)$ is a bounded viscosity supersolution
if and only if $v$ is a bounded weak supersolution to $-\kappa\Delta_{q}^{d}v=f$
in $(0,R)$, where $\kappa>0$ and $\Delta_{q}^{d}$ is heuristically
speaking the radial $q$-Laplacian in a fictitious dimension $d$.
As a corollary we obtain the uniqueness of radial viscosity solutions.
However, the full uniqueness of solutions remains an open problem.
\end{abstract}

\maketitle

\section{Introduction}

In this paper, we study radial viscosity solutions to the equation
\begin{equation}
-\left|Du\right|^{q-2}\Delta_{p}^{N}u=f(\left|x\right|)\quad\text{in }B_{R},\label{eq:normplap f}
\end{equation}
where
\[
\Delta_{p}^{N}u:=\Delta u+\frac{(p-2)}{\left|Du\right|^{2}}\sum_{i,j=1}^{N}D_{ij}uD_{i}uD_{j}u
\]
is the normalized $p$-Laplacian, $\smash{f\in C[0,R)}$, $\smash{B_{R}\subset\mathbb{R}^{N}}$,
$\smash{N\geq2}$ and $p,q\in(1,\infty)$. The left-hand side of the
equation (\ref{eq:normplap f}) is the usual $p$-Laplacian when $q=p$
and the normalized $p$-Laplacian when $q=2$. In particular, the
equation (\ref{eq:normplap f}) may be in a non-divergence form and
therefore the use of viscosity solutions is appropriate. Since we
are interested in radial solutions, it is natural to restrict to a
ball at the origin and assume that the source term is radial.

Recently Parviainen and V{\'a}zquez \cite{ParvVaz} proved that radial
viscosity solutions to the parabolic equation $\partial_{t}u=\left|Du\right|^{q-2}\Delta_{p}^{N}u$
coincide with weak solutions of a one dimensional equation related
to the usual radial $q$-Laplacian. The objective of the present work
is to obtain a similar equivalence result for the equation (\ref{eq:normplap f})
while also considering supersolutions. Since the one dimensional equation
satisfies a comparison principle, we obtain the uniqueness of radial
solutions to (\ref{eq:normplap f}) as a corollary. To the best of
our knowledge, this was previously known only for $f=0$ or $f$ with
a constant sign \cite{kawohlManfrediParviainen12} and the full uniqueness
remains an open problem.

Stated more precisely, our main result is that $u(x):=v(\left|x\right|)$
is a bounded viscosity supersolution to (\ref{eq:normplap f}) if
and only if $v$ is a bounded weak supersolution to the one-dimensional
equation
\begin{equation}
-\kappa\Delta_{q}^{d}v\geq f\quad\text{in }(0,R)\subset\mathbb{R},\label{eq:radial eq}
\end{equation}
where
\[
\Delta_{q}^{d}v:=\left|v^{\prime}\right|^{q-2}\big((q-1)v^{\prime\prime}+\frac{d-1}{r}v^{\prime}\big)
\]
and $\kappa$ and $d$ are given in (\ref{eq:kappa and d}). Heuristically
speaking, the operator $\smash{\Delta_{q}^{d}}$ is the usual radial
$q$-Laplacian in a fictitious dimension $d$. Indeed, we show that
if $d$ is an integer, then supersolutions to (\ref{eq:radial eq})
coincide with radial supersolutions to the equation $\smash{-\Delta_{q}u\geq f(\left|x\right|)}$
in $\smash{B_{R}\subset\mathbb{R}^{d}}$. The precise definition of
weak supersolutions to (\ref{eq:radial eq}) uses certain weighted
Sobolev spaces and is given in Section 2.

Let us illustrate the relationship between equations (\ref{eq:normplap f})
and (\ref{eq:radial eq}) by a few formal computations. Assume that
$\smash{u:\mathbb{R}^{N}\rightarrow\mathbb{R}}$ is a smooth function
such that $u(x)=v(\left|x\right|)$ for some $v:[0,\infty)\rightarrow\mathbb{\mathbb{R}}$.
Then by a simple calculation, we have $\smash{Du(re_{1})=e_{1}v^{\prime}(r)}$
and $\smash{D^{2}u(re_{1})=e_{1}\otimes e_{1}v^{\prime\prime}(r)+r^{-1}(I-e_{1}\otimes e_{1})v^{\prime}(r)}$
for $r>0$. In particular we have $\smash{\left|Du(re_{1})\right|=\left|v^{\prime}(r)\right|}$.
Assuming that the gradient does not vanish, we obtain
\begin{align}
\Delta_{p}^{N}u(re_{1}) & =\Delta u+\frac{p-2}{\left|Du(re_{1})\right|^{2}}\sum_{i,j=1}^{N}D_{ij}uD_{i}uD_{j}u\nonumber \\
 & =(p-1)v^{\prime\prime}(r)+\frac{N-1}{r}v^{\prime}(r).\label{eq:intro 1}
\end{align}
Denoting
\begin{equation}
\kappa:=\frac{p-1}{q-1},\quad d:=\frac{(N-1)(q-1)}{p-1}+1\label{eq:kappa and d}
\end{equation}
and multiplying (\ref{eq:intro 1}) by $\left|Du(re_{1})\right|^{q-2}$,
it follows that
\begin{align*}
\left|Du(re_{1})\right|^{q-2}\Delta_{p}^{N}u(re_{1}) & =\kappa\left|v^{\prime}(r)\right|^{q-2}\big((q-1)v^{\prime\prime}(r)+\frac{d-1}{r}v^{\prime}(r)\big),
\end{align*}
where the right-hand side equals $\smash{\kappa\Delta_{q}^{d}v(r)}$.
Thus at least formally there is an equivalence between the equations
(\ref{eq:normplap f}) and (\ref{eq:radial eq}). However, to make
this rigorous, we need to carefully exploit the exact definitions
of viscosity and weak supersolutions.

To show that $v$ is a weak supersolution to (\ref{eq:radial eq})
whenever $u$ is a viscosity supersolution to (\ref{eq:normplap f}),
we apply the method developed by Julin and Juutinen \cite{newequivalence}.
The idea is to approximate $u$ using its inf-convolution $\smash{u_{\varepsilon}}$.
Since $\smash{u_{\varepsilon}}$ is still radial, there is $\smash{v_{\varepsilon}}$
such that $\smash{u_{\varepsilon}(x)=v_{\varepsilon}(\left|x\right|)}$.
Using the pointwise properties of inf-convolution, we show that $\smash{v_{\varepsilon}}$
is a weak supersolution to (\ref{eq:radial eq}). It then suffices
to pass to the limit to see that $v$ is also a weak supersolution.

To show that $u$ is a viscosity supersolution to (\ref{eq:normplap f})
whenever $v$ is a weak supersolution to (\ref{eq:radial eq}), we
adapt a standard argument used for example in \cite{equivalence_plaplace}.
Thriving for a contradiction, we assume that $u$ is not a viscosity
supersolution. Roughly speaking, this means that there exists a smooth
function $\varphi$ that touches $u$ from below and (\ref{eq:normplap f})
fails at the point of touching. We use $\varphi$ to construct a new
function $\phi$ that is a weak subsolution to (\ref{eq:radial eq})
and touches $v$ from below. This violates a comparison principle
and produces the desired contradiction. To avoid technicalities that
might occur should the gradient of $\varphi$ vanish, we use an equivalent
definition of viscosity supersolutions proposed by Birindelli and
Demengel \cite{birindelliDemengel04}. Extra care is also needed if
the point of touching is the origin.

The equation (\ref{eq:normplap f}) has received an increasing amount
of attention in the last several years. For example, the $\smash{C^{1,\alpha}}$
regularity of radial solutions to (\ref{eq:normplap f}) was shown
by Birindelli and Demengel \cite{birindelliDemengel12}. Using a different
technique Imbert and Silvestre \cite{imbertSilvestre12} proved the
$\smash{C^{1,\alpha}}$ regularity of solutions to $\smash{\left|Du\right|^{q-2}F(D^{2}u)=f}$
when $q>2$. More recently Attouchi and Ruosteenoja \cite{attouchiRuosteenoja18}
obtained $\smash{C^{1,\alpha}}$ regularity results for any solution
of (\ref{eq:normplap f}) and also proved some $\smash{W^{2,2}}$
estimates.

The equivalence of viscosity and weak solutions was first studied
by Ishii \cite{Ishii95} in the case of linear equations. The equivalence
of solutions for $p$-Laplace equation was first obtained by Manfredi,
Lindqvist and Juutinen \cite{equivalence_plaplace}, later in a different
way by Julin and Juutinen \cite{newequivalence} and for the $p(x)$-Laplace
equation by Juutinen, Lukkari and Parviainen \cite{equivalence_p(x)Laplace}.
Recent papers on this matter include the works of Attouchi, Parviainen
and Ruosteenoja \cite{OptimalC1} on the normalized $p$-Poisson problem
where the equivalence was used to obtain $C^{1,\alpha}$ regularity
of solutions, Medina and Ochoa \cite{chilepaper} on a non-homogeneous
$p$-Laplace equation, Siltakoski \cite{siltakoski18} on a normalized
$p(x)$-Laplace equation and Bieske and Freeman \cite{bieskeFreeman18}
on the $p(x)$-Laplace equation in Carnot groups.

The paper is organized as follows. Section 2 contains the precise
definitions of viscosity solutions and weak solutions in our context.
In Section 3 we show that weak supersolutions to (\ref{eq:radial eq})
are viscosity supersolutions to (\ref{eq:normplap f}) and the converse
is proved in Section 4. In Section 5 we consider the special case
where $d$ is an integer and finally the Appendix contains some properties
of the weighted Sobolev spaces. 

\section{Preliminaries}

\subsection{Viscosity solutions}

Let $\smash{\varphi,u:B_{R}\rightarrow\mathbb{R}}$. We say that \textit{$\varphi$
touches $u$ from below at $\smash{x_{0}\in B_{R}}$} if $\smash{\varphi(x_{0})=u(x_{0})}$
and $\varphi(x)<u(x)$ when $\smash{x\not=x_{0}}$.
\begin{defn}
\label{def:visc sol}A bounded lower semicontinuous function $\smash{u:B_{R}\rightarrow\mathbb{R}}$
is a \textit{viscosity supersolution} to $\eqref{eq:normplap f}$
in $\smash{B_{R}}$ if whenever $\smash{\varphi\in C^{2}}$ touches
$u$ from below at $\smash{x_{0}}$ and $D\varphi(x)\not=0$ when
$\smash{x\not=x_{0}}$, then we have
\[
\limsup_{x_{0}\not=y\rightarrow x_{0}}\left(-\left|D\varphi(y)\right|^{q-2}\Delta_{p}^{N}\varphi(y)\right)-f(\sa x_{0}\sa)\geq0.
\]
A bounded upper semicontinuous function $\smash{u:B_{R}\rightarrow\mathbb{R}}$
is \textit{a viscosity subsolution} to $\eqref{eq:normplap f}$ in
$\smash{B_{R}}$ if whenever $\smash{\varphi\in C^{2}}$ touches $u$
from above at $\smash{x_{0}}$ and $D\varphi(x)\not=0$ when $\smash{x\not=x_{0}}$,
then we have
\[
\liminf_{x_{0}\not=y\rightarrow x_{0}}\left(-\left|D\varphi(y)\right|^{q-2}\Delta_{p}^{N}\varphi(y)\right)-f(\sa x_{0}\sa)\leq0.
\]
A function is a \textit{viscosity solution} if it is both viscosity
sub- and supersolution.
\end{defn}
The limit procedure in Definition \ref{def:visc sol} is needed because
of the discontinuity in the equation when $q\leq2$. When $q>2$ the
equation is continuous and the limit procedure is unnecessary.

\subsection{Weak solutions}

In order to define weak solutions, we must first define the appropriate
Sobolev spaces. The \textit{weighted Lebesgue space} $\smash{L^{q}(r^{d-1},(0,R))}$
is defined as the set of all measurable functions $\smash{v:(0,R)\rightarrow\mathbb{R}}$
such that the norm
\[
\left\Vert v\right\Vert _{L^{q}(r^{d-1},(0,R))}:=\big(\int_{0}^{R}\left|v\right|^{q}r^{d-1}\d r\big)^{1/q}
\]
is finite. We define the \textit{weighted Sobolev space} $\smash{W^{1,q}(r^{d-1},(0,R))}$
as the set of all functions $\smash{v\in L^{q}(r^{d-1},(0,R))}$ whose
distributional derivative $\smash{v^{\prime}}$ is in $\smash{L^{q}(r^{d-1},(0,R))}$.
As usual, by distributional derivative we mean that $v^{\prime}$
satisfies
\[
\int_{0}^{R}v^{\prime}\varphi\d r=-\int_{0}^{R}v\varphi^{\prime}\d r
\]
 for all $\varphi\in C_{0}^{\infty}(0,R)$. We equip $\smash{W^{1,q}(r^{d-1},(0,R))}$
with the norm
\[
\left\Vert v\right\Vert _{W^{1,q}(r^{d-1},(0,R))}:=\big(\int_{0}^{R}\left|v\right|^{q}r^{d-1}\d r+\int_{0}^{R}\sa v^{\prime}\sa^{q}r^{d-1}\d r\big)^{1/q}.
\]
Then $\smash{W^{1,q}(r^{d-1},(0,R))}$ is a separable Banach space,
see e.g.\ \cite{kufnerOpic84} or \cite{kufner85}. Since $d>1$,
it follows from Theorem 7.4 in \cite{kufner85} that the set
\[
C^{\infty}[0,R]:=\left\{ u_{|(0,R)}:u\in C^{\infty}(\mathbb{R})\right\} 
\]
is dense in $\smash{W^{1,q}(r^{d-1},(0,R))}$. For the benefit of
the reader we have also included a proof in the appendix, see Theorem
\ref{thm:w1q density}. We point out that any $v\in W^{1,q}(r^{d-1},(0,R))$
has a representative that is continuous in $(0,R]$. Indeed, for any
$\delta>0$ we have $\smash{\delta^{d-1}<r^{d-1}<R^{d-1}}$ when $\smash{r\in(\delta,R)}$
and consequently the restriction $\smash{v_{|(\delta,R)}}$ is in
the usual Sobolev space $\smash{W^{1,q}(\delta,R)}$. 

In addition to \cite{kufner85}, weighted Sobolev spaces have been
studied for example in \cite{superbook}. However, the weight $w:\mathbb{R}\rightarrow\mathbb{R}$,
$\smash{w(x)=\left|x\right|^{d-1}}$ is not necessarily $q$-admissible
in the sense of \cite{superbook}. Indeed, in the one dimensional
setting $q$-admissible weights coincide with Muckenhoupt's $\smash{A_{q}}$-weights
\cite{bjornBuckleyKeith06}. Thus $w$ is $q$-admissible if and only
if $d-1<p-1$ which by (\ref{eq:kappa and d}) is equivalent to $p>N$.

With the weighted Sobolev spaces at hand, we can define weak solutions.
Recall that formally the equation (\ref{eq:radial eq}) reads as 
\[
-\kappa\left|v^{\prime}\right|^{q-2}\big((q-1)v^{\prime\prime}+\frac{d-1}{r}v^{\prime}\big)=f\quad\text{in }(0,R),
\]
where $\kappa$ and $d$ are the constants given in (\ref{eq:kappa and d}).
If $v$ is smooth and the gradient does not vanish, this can be equivalently
written as 
\[
-\kappa\big(\left|v^{\prime}\right|^{q-2}v^{\prime}r^{d-1}\big)^{\prime}-fr^{d-1}=0\quad\text{in }(0,R).
\]

\begin{defn}
\label{def:weak sol}We say that $v$ is a \textit{weak supersolution}
to (\ref{eq:radial eq}) in $(0,R)$ if $v\in\smash{W^{1,q}(r^{d-1},(0,R^{\prime}))}$
for all $\smash{R^{\prime}\in(0,R)}$ and we have
\begin{equation}
\int_{0}^{R}\kappa\left|v^{\prime}\right|^{q-2}v^{\prime}\varphi^{\prime}r^{d-1}-\varphi fr^{d-1}\d r\geq0\label{eq:weak super}
\end{equation}
for all non-negative $\smash{\varphi\in C_{0}^{\infty}(-R,R)}$. For
\textit{weak subsolutions} the inequality (\ref{eq:weak super}) is
reversed. Furthermore, $\smash{v\in C[0,R)}$ is a \textit{weak solution}
if it is both weak sub- and supersolution.
\end{defn}
Recall that our goal is to establish an equivalence between radial
viscosity supersolutions of (\ref{eq:normplap f}) and weak supersolutions
of (\ref{eq:radial eq}). For this reason the class of test functions
in Definition \ref{def:weak sol} needs to be $\smash{C_{0}^{\infty}(-R,R)}$
instead of $\smash{C_{0}^{\infty}(0,R)}$, see the example below.
We also point out that if $d$ is an integer, then weak supersolutions
in the sense of Definition 2.2 coincide with radial weak supersolutions
to $\smash{\Delta_{q}u\geq f(\left|x\right|)}$, where $\smash{\Delta_{q}}$
is the usual $q$-Laplacian in $d$-dimensions, see Theorem \ref{thm:q laplace equivalence}. 
\begin{example}
Let $p>N$, $f\equiv0$, and define $v:(0,R)\rightarrow\mathbb{R}$
by
\[
v(r):=\frac{1}{1-\alpha}r^{1-\alpha},\quad\text{where }\alpha:=\frac{N-1}{p-1}.
\]
Then $\smash{v\in W^{1,q}(r^{d-1},(0,R))}$ and it satisfies (\ref{eq:weak super})
for all non-negative $\smash{\varphi\in C_{0}^{\infty}(0,R)}$, but
$u(x):=v(\left|x\right|)$ is not a viscosity supersolution to (\ref{eq:normplap f}).
To verify this, observe first that $v$ is in the correct Sobolev
space. Indeed, the distributional derivative of $v$ is $\smash{v^{\prime}(r)=r^{-\alpha}}$
and thus $v^{\prime}\in L^{q}(r^{d-1},(0,R))$ since
\[
-\alpha q+d-1=-\frac{N-1}{p-1}q+\frac{(N-1)(q-1)}{p-1}=-\frac{N-1}{p-1}>-1.
\]
Moreover, for any non-negative $\varphi\in C_{0}^{\infty}(0,R)$,
we have
\[
\int_{0}^{R}\kappa\left|v^{\prime}\right|^{q-2}v^{\prime}\varphi^{\prime}r^{d-1}\d r=\int_{0}^{R}\kappa r^{-(q-1)\alpha}\varphi^{\prime}r^{\frac{(N-1)(q-1)}{p-1}}\d r=\int_{0}^{R}\kappa\varphi^{\prime}\d r=0.
\]
To see that the function $u(x)=v(\left|x\right|)$ is not a viscosity
supersolution to (\ref{eq:normplap f}), set $\smash{\phi(x):=(x_{1}-1)^{2}}$.
Then $u-\phi$ has a local minimum at $0$ and $D\phi(0)\not=0$,
but
\begin{align*}
 & -\left|D\varphi(0)\right|^{q-2}\Delta_{p}^{N}\varphi(0)\\
 & \ =-\left|2\right|^{q-2}\big(\tr(2e_{1}\otimes e_{1})+\frac{(p-2)}{2^{2}}(-2e_{1})^{\prime}(2e_{1}\otimes e_{1})(-2e_{1})\big)<0,
\end{align*}
which means that $u$ is not a supersolution.
\end{example}
\begin{lem}
\label{lem:admissible test functions}We may extend the class of test
functions in Definition \ref{def:weak sol} to $\varphi\in\smash{W^{1,q}(r^{d-1},(0,R))}$
such that $\smash{\supp\varphi\subset[0,R^{\prime})}$ for some $\smash{R^{\prime}\in(0,R)}$.
\end{lem}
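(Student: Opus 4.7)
The plan is to approximate a given non-negative $\varphi\in W^{1,q}(r^{d-1},(0,R))$ with $\supp\varphi\subset[0,R_{0})$ for some $R_{0}\in(0,R)$ in the $W^{1,q}(r^{d-1},(0,R))$-norm by non-negative elements of $C_{0}^{\infty}(-R,R)$ (viewed on $(0,R)$), and then pass to the limit in (\ref{eq:weak super}).

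First, by Theorem \ref{thm:w1q density} I would pick $u_{k}\in C^{\infty}(\mathbb{R})$ with $u_{k}|_{(0,R)}\to\varphi$ in $W^{1,q}(r^{d-1},(0,R))$. To put the approximants into the required class, fix $R_{1}\in(R_{0},R)$ and a cutoff $\eta\in C_{0}^{\infty}(-R,R)$ with $0\le\eta\le 1$, $\eta\equiv 1$ on $[-R_{0},R_{0}]$ and $\supp\eta\subset(-R_{1},R_{1})$. Then $\eta u_{k}\in C_{0}^{\infty}(-R,R)$, and since $\eta\equiv 1$ on $\supp\varphi$, the decomposition
\[
\eta u_{k}-\varphi=\eta(u_{k}-\varphi)+(\eta-1)\varphi
\]
shows $\eta u_{k}|_{(0,R)}\to\varphi$ in $W^{1,q}(r^{d-1},(0,R))$.

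The next step enforces non-negativity, which is the main technical point. The truncations $(\eta u_{k})^{+}$ are Lipschitz and supported in $(-R_{1},R_{1})$, and I claim $(\eta u_{k})^{+}\to\varphi^{+}=\varphi$ in $W^{1,q}(r^{d-1},(0,R))$ along a subsequence. This follows from the standard chain rule for the positive part: on $\{\varphi=0\}$ one has $\varphi'=0$ almost everywhere, while on its complement the indicators $\chi_{\{\eta u_{k}>0\}}$ converge a.e.\ to $\chi_{\{\varphi>0\}}$ (passing to a pointwise-convergent subsequence), so a dominated-convergence argument controls the derivative term. Convolving each $(\eta u_{k})^{+}$ with a non-negative mollifier of scale $\varepsilon_{k}$ small enough that the support stays inside $(-R,R)$ then yields $\chi_{k}\in C_{0}^{\infty}(-R,R)$, non-negative, with $\chi_{k}\to\varphi$ in $W^{1,q}(r^{d-1},(0,R))$.

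Finally, applying (\ref{eq:weak super}) to each $\chi_{k}$ and letting $k\to\infty$ delivers the inequality for $\varphi$: since $v\in W^{1,q}(r^{d-1},(0,R_{1}))$ forces $|v'|^{q-2}v'\in L^{q/(q-1)}(r^{d-1},(0,R_{1}))$ and $\chi_{k}'\to\varphi'$ in $L^{q}(r^{d-1},(0,R_{1}))$, H\"older's inequality handles the principal term, while $f\in C[0,R)$ being bounded on $[0,R_{1}]$ together with $\chi_{k}\to\varphi$ in $L^{q}(r^{d-1},(0,R_{1}))$ handles the source term. The main obstacle is the interaction of smoothness with non-negativity in the third step; keeping $\supp\eta u_{k}$ strictly inside $(-R,R)$ is what makes the truncation-then-mollification procedure safe in this weighted setting.
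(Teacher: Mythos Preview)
Your proof is correct and follows essentially the paper's approach: use Theorem~\ref{thm:w1q density} to approximate $\varphi$, multiply by a cutoff to obtain functions in $C_{0}^{\infty}(-R,R)$, and pass to the limit via H\"older's inequality. The only difference is your explicit handling of non-negativity through truncation and re-mollification; the paper simply sets $\phi_{j}=\varphi_{j}\xi$ and applies (\ref{eq:weak super}) directly, tacitly relying on the fact that the mollification-based approximants from Theorem~\ref{thm:w1q density} can be taken non-negative when $\varphi\ge 0$, so your extra step makes that point self-contained rather than changing the strategy.
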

\begin{proof}
Take a cut-off function $\xi\in C_{0}^{\infty}(-R,R)$ such that $\xi\equiv1$
in $[0,R^{\prime}]$. Take $\varphi_{j}\in C^{\infty}[0,R]$ such
that $\varphi_{j}\rightarrow\varphi$ in $W^{1,q}(r^{d-1},(0,R))$.
Set $\phi_{j}:=\varphi_{j}\xi$. Then $\phi_{j}\in C_{0}^{\infty}(-R,R)$
and hence 
\begin{align}
0\leq & \int_{0}^{R}\left|v^{\prime}\right|^{q-2}v^{\prime}\phi_{j}^{\prime}r^{d-1}-f\phi_{j}r^{d-1}\d r\nonumber \\
= & \int_{0}^{R}\left|v^{\prime}\right|^{q-2}v^{\prime}\varphi_{j}^{\prime}\xi r^{d-1}-f\varphi_{j}\xi r^{d-1}\d r+\int_{0}^{R}\left|v^{\prime}\right|^{q-2}v^{\prime}\varphi_{j}\xi^{\prime}r^{d-1}\d r.\label{eq:admissible test functions 1}
\end{align}
Since $\xi\equiv1$ in $\supp\varphi$, we have $\varphi^{\prime}\xi=\varphi^{\prime}$
and so\begin{note2}(We have $\varphi^{\prime}\xi=\varphi^{\prime}$
in $\supp\varphi$ and $\varphi^{\prime}=0$ a.e.\ in $(0,R)\setminus\supp\varphi$.
Indeed, let $\psi=\varphi_{\text{|(\ensuremath{\delta},R)}}\in W^{1,q}(\delta,R)$.
Then $\psi\in W^{1,q}(\delta,R)$ and $\psi^{\prime}=0$ a.e.\ in
$(\delta,R)\setminus\supp\psi$. Thus $\varphi^{\prime}\xi=\varphi^{\prime}$)\end{note2}

\begin{align*}
 & \int_{0}^{R}\left|v^{\prime}\right|^{q-2}v^{\prime}\varphi_{j}^{\prime}\xi r^{d-1}-f\varphi_{j}\xi r^{d-1}\d r\\
 & \ =\int_{0}^{R}\left|v^{\prime}\right|^{q-2}v^{\prime}(\varphi_{j}^{\prime}-\varphi^{\prime})\xi r^{d-1}-f(\varphi_{j}-\varphi)\xi r^{d-1}\d r\\
 & \ \ \ \ \ +\int_{0}^{R}\left|v^{\prime}\right|^{q-2}v^{\prime}\varphi^{\prime}r^{d-1}-f\varphi r^{d-1}\d r.
\end{align*}
Combining this with (\ref{eq:admissible test functions 1}), we get
\begin{align*}
\int_{0}^{R}\left|v^{\prime}\right|^{q-2}v^{\prime}\varphi^{\prime}r^{d-1}-f\varphi r^{d-1}\d r\geq- & \!\int_{0}^{R}\left|v^{\prime}\right|^{q-1}\left|\varphi_{j}^{\prime}-\varphi^{\prime}\right|\xi r^{d-1}+\left|f\right|\left|\varphi_{j}-\varphi\right|\xi r^{d-1}dr\\
 & -\!\int_{0}^{R}\left|v^{\prime}\right|^{q-1}\left|\varphi_{j}\right|\left|\xi^{\prime}\right|r^{d-1}\d r.
\end{align*}
The first integral at the right-hand side converges to zero by H{\"o}lder's
inequality. Moreover, since $\smash{\varphi\xi^{\prime}\equiv0}$
in $(0,R)$, we have
\[
\int_{0}^{R}\left|v^{\prime}\right|^{q-1}\left|\varphi_{j}\right|\left|\xi^{\prime}\right|r^{d-1}\d r=\int_{0}^{R}\left|v^{\prime}\right|^{q-1}\left|\varphi_{j}-\varphi\right|\left|\xi^{\prime}\right|r^{d-1}\d r\rightarrow0.\qedhere
\]
\end{proof}

\section{Weak solutions are viscosity solutions}

We show that bounded weak supersolutions to (\ref{eq:radial eq})
are radial viscosity supersolutions to (\ref{eq:normplap f}). In
order to formulate the precise statement, we recall that the lower
semicontinuous reguralization of a function $v:(0,R)\rightarrow\mathbb{R}$
is defined by
\[
v_{\ast}(r):=\essliminf_{s\rightarrow r}v(s):=\lim_{S\rightarrow0}\essinf_{s\in(r-S,r+S)\cap(0,R)}v(s)
\]
for all $r\in[0,R]$. Observe that since any function $\smash{v\in W^{1,q}(r^{d-1},(0,R))}$
admits a continuous representative, we have $\smash{v=v_{\ast}}$
almost everywhere in $(0,R)$ for such $v$. 
\begin{thm}
\label{thm:weak is visc}Assume that $v$ is a bounded weak supersolution
to (\ref{eq:radial eq}) in $(0,R)$. Then $u(x):=v_{\ast}(\left|x\right|)$
is a viscosity supersolution to (\ref{eq:normplap f}) in $B_{R}$.
\end{thm}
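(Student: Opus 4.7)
The plan is to argue by contradiction along the lines of \cite{equivalence_plaplace}. Assume $u:=v_{\ast}(|\cdot|)$ fails to be a viscosity supersolution to (\ref{eq:normplap f}). Definition~\ref{def:visc sol} then furnishes a $C^{2}$ test function $\varphi$ touching $u$ from below at some $x_{0}\in B_{R}$, with $D\varphi(x)\ne 0$ for $x\ne x_{0}$, such that
\[
\limsup_{y\to x_{0},\,y\ne x_{0}}\bigl(-|D\varphi(y)|^{q-2}\Delta_{p}^{N}\varphi(y)\bigr)<f(|x_{0}|).
\]
Using continuity of $f$ and subtracting a small term of the form $c|x-x_{0}|^{4}$ (which does not alter the first two derivatives at $x_{0}$), I would pass to a strict touching and fix $\rho,\delta>0$ so that $-|D\varphi(y)|^{q-2}\Delta_{p}^{N}\varphi(y)\le f(|y|)-\delta$ on $B_{\rho}(x_{0})\setminus\{x_{0}\}$.

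Treating next the generic case $x_{0}\neq 0$ and choosing coordinates so that $x_{0}=r_{0}e_{1}$ with $r_{0}\in(0,R)$, I would set $\phi(r):=\varphi(re_{1})$. Since $u$ is radial, $\phi$ touches $v_{\ast}$ from below at $r_{0}$. The key observation is that $\varphi\vert_{\{|x|=r_{0}\}}$ attains its maximum at $x_{0}$, so differentiating along great circles through $x_{0}$ yields $\partial_{i}\varphi(x_{0})=0$ and $\partial_{ii}\varphi(x_{0})\le\phi'(r_{0})/r_{0}$ for each $i\ge 2$. In the nondegenerate case $\phi'(r_{0})\ne 0$, this bound together with the identities $\kappa(q-1)=p-1$ and $\kappa(d-1)=N-1$ from the introduction gives
\[
-\kappa\,\Delta_{q}^{d}\phi(r_{0})\le -|\phi'(r_{0})|^{q-2}\Delta_{p}^{N}\varphi(x_{0})<f(r_{0})-\delta,
\]
and continuity of $\phi,\phi',\phi'',f$ propagates $-\kappa\,\Delta_{q}^{d}\phi(r)<f(r)-\delta/2$ to some interval $I=(r_{0}-\tau,r_{0}+\tau)\Subset(0,R)$, making $\phi$ a strict classical subsolution on $I$.

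The contradiction is then standard. For $\eta>0$ small, $\psi_{\eta}:=(\phi+\eta-v)_{+}$ belongs to $W^{1,q}(r^{d-1},(0,R))$, is nonnegative, and by strict touching and continuity of $v$ on $(0,R]$ has compact support in $I$; it is therefore admissible in Definition~\ref{def:weak sol} by Lemma~\ref{lem:admissible test functions}. Testing the weak supersolution inequality for $v$ against $\psi_{\eta}$, integrating the classical subsolution inequality for $\phi$ by parts against $\psi_{\eta}$, and subtracting, I would obtain
\[
\int_{0}^{R}\kappa\bigl(|v'|^{q-2}v'-|\phi'|^{q-2}\phi'\bigr)\psi_{\eta}'\,r^{d-1}\,\d r>\frac{\delta}{2}\int_{0}^{R}\psi_{\eta}\,r^{d-1}\,\d r>0.
\]
Since $\psi_{\eta}'=\phi'-v'$ on $\{\phi+\eta>v\}$ and vanishes elsewhere, the left-hand integrand equals $-\kappa\bigl(|\phi'|^{q-2}\phi'-|v'|^{q-2}v'\bigr)(\phi'-v')$, which is nonpositive by the monotonicity of $s\mapsto|s|^{q-2}s$ -- contradiction.

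The hard part will be the degenerate cases. The first is $\phi'(r_{0})=0$: then $\Delta_{q}^{d}\phi$ becomes indeterminate, and when $q<2$ the discontinuity of $|D\varphi|^{q-2}$ means the limsup in Definition~\ref{def:visc sol} cannot simply be replaced by a pointwise value; here I would invoke the Birindelli--Demengel equivalent definition of viscosity supersolutions \cite{birindelliDemengel04}, which disciplines the admissible test functions at critical points in exactly the way needed to carry through the one-dimensional extraction. The second is $x_{0}=0$: the great-circle argument degenerates and $\phi$ must touch $v_{\ast}$ from below at the endpoint $r=0$, so $\psi_{\eta}$ has support only in $[0,\tau)$; this case is precisely why Definition~\ref{def:weak sol} uses $C_{0}^{\infty}(-R,R)$ rather than $C_{0}^{\infty}(0,R)$ and why Lemma~\ref{lem:admissible test functions} was proved, and I would rely on both to push the contradiction argument through at the origin.
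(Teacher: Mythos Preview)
Your overall strategy---contradiction, radial reduction via Lemma~\ref{lem:jets of radial function}, comparison with a lifted test function---is exactly the paper's approach, and your inline comparison argument with $\psi_\eta=(\phi+\eta-v)_+$ is precisely the content of Lemma~\ref{lem:comparison lemma}. One organizational difference: the paper adopts the Birindelli--Demengel Definition~\ref{def:visc sol 2} from the outset rather than starting from Definition~\ref{def:visc sol} and switching later. This is cleaner because condition~(i) already forces $D\varphi(x_0)\ne 0$, so by Lemma~\ref{lem:jets of radial function} one has $\phi'(r_0)\ne 0$ automatically and your ``degenerate case $\phi'(r_0)=0$'' simply never arises. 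It also makes visible condition~(ii) (the constant case), which you do not mention: when $u$ is constant near $x_0$ one must still check $f\le 0$ there, and the paper does this directly from the weak inequality with $v'\equiv 0$.

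There is, however, a real missing idea in your treatment of $x_0=0$. You correctly note that $\psi_\eta$ will have support in $[0,\tau)$ and that this is why Definition~\ref{def:weak sol} uses $C_0^\infty(-R,R)$ and why Lemma~\ref{lem:admissible test functions} exists. But that is only the \emph{comparison} half of the argument; the other half is establishing that $\phi$ is a weak subsolution to (\ref{eq:radial eq}) on $(0,\tau)$, and here the great-circle estimate (Lemma~\ref{lem:jets of radial function}) is unavailable since it requires $r_0\ne 0$. The paper's mechanism is different: from Birindelli--Demengel one has $D\varphi(0)\ne 0$, so setting $\xi=D\varphi(0)/|D\varphi(0)|$ and $\phi(r)=\varphi(r\xi)$ gives $\phi'(r)\ge M>0$ near $0$. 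The singular term $\frac{d-1}{r}\phi'(r)$ then blows up and dominates $\phi''$ and $f$, forcing $-\kappa\Delta_q^d\phi(r)\le f(r)$ on $(0,\tau)$ without any comparison of $\Delta_p^N\varphi$ to $\Delta_q^d\phi$. One must also check that the boundary term $\kappa|\phi'(h)|^{q-2}\phi'(h)h^{d-1}\zeta(h)$ vanishes as $h\to 0$ when integrating by parts, which uses $d>1$ and the boundedness of $\phi'$. Without these ingredients the origin case does not close.
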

To prove Theorem \ref{thm:weak is visc}, we use the following definition
of viscosity supersolutions introduced by Birindelli and Demengel
\cite{birindelliDemengel04}. Its advantage is that we may restrict
to test functions whose gradient does not vanish. It is shown in \cite{attouchiRuosteenoja18}
that Definitions \ref{def:visc sol} and \ref{def:visc sol 2} are
equivalent.
\begin{defn}
\label{def:visc sol 2}A bounded and lower semicontinuous function
$u:B_{R}\rightarrow\mathbb{R}$ is a viscosity supersolution to (\ref{eq:normplap f})
if for any $x_{0}\in B_{R}$ one of the following conditions holds.
\begin{enumerate}
\item The function $u$ is not a constant in $B_{\delta}(x_{0})$ for any
$\delta>0$, and whenever $\varphi\in C^{2}$ touches $u$ from below
at $x_{0}$ with $D\varphi(x_{0})\not=0$, we have
\begin{equation}
-\left|D\varphi(x_{0})\right|^{q-2}\Delta_{p}^{N}\varphi(x_{0})\geq f(\left|x_{0}\right|).\label{eq:somecond}
\end{equation}
\item The function $u$ is a constant in $B_{\delta}(x_{0})$ for some $\delta>0$,
and we have
\[
f(\left|x\right|)\leq0\quad\text{for all }x\in B_{\delta}(x_{0}).
\]
\end{enumerate}
\end{defn}
\begin{proof}[Proof of Theorem \ref{thm:weak is visc}]
Let $x_{0}\in B_{R}$. We first consider the case where $u$ is a
constant in $B_{\delta}(x_{0})$ for some $\delta>0$. In this case
also $v$ is a constant a.e.\ in $I:=(0,R)\cap(\left|x_{0}\right|-\delta,\left|x_{0}\right|+\delta)$.
This implies that $\smash{v^{\prime}\equiv0}$ a.e.\ in $I$ and
thus, since $v$ is a weak supersolution to (\ref{eq:radial eq}),
we have
\[
\int_{I}\varphi fr^{d-1}\d r\leq0
\]
for all non-negative $\varphi\in C_{0}^{\infty}(I)$. Since $f$ is
continuous, it follows that $f\le0$ in $I$ and consequently $f(\left|x\right|)\leq0$
in $B_{\delta}(x_{0})$, as desired.

Assume then that $u$ is not a constant near $\smash{x_{0}}$. Suppose
on the contrary that the condition (i) of Definition \ref{def:visc sol 2}
fails at $\smash{x_{0}}$, that is, there exists $\smash{\varphi\in C^{2}}$
touching $u$ from below at $x_{0}$ with $\smash{D\varphi(x_{0})\not=0}$
and
\begin{equation}
f(\left|x_{0}\right|)>-\left|D\varphi(x_{0})\right|^{q-2}\Delta_{p}^{N}\varphi(x_{0}).\label{eq:weak is visc 2}
\end{equation}
We consider the case $\smash{x_{0}\not=0}$ first and argue like in
the proof of Proposition A.3 in \cite{ParvVaz}. Let $Q$ be an orthogonal
matrix such that $\smash{x_{0}}=\smash{r_{0}Qe_{1}}$, where $\smash{r_{0}:=\left|x_{0}\right|}$
and define $\smash{\psi(x):=\varphi(Qx)}$. Then $\psi$ touches $u$
from below at $\smash{r_{0}e_{1}}$ and we have $\smash{D\psi(x)=Q^{\prime}D\varphi(Qx)}$
and $\smash{D^{2}\psi(x)=Q^{\prime}D^{2}\varphi(Qx)Q}$. From these
and (\ref{eq:weak is visc 2}) it follows that 
\begin{equation}
f(r_{0})>-\left|D\psi(r_{0}e_{1})\right|^{q-2}\Delta_{p}^{N}\psi(r_{0}e_{1}).\label{eq:weak is visc some}
\end{equation}
Since $\psi$ touches the radial function $u$ from below at $r_{0}e_{1}\not=0$,
we have $\smash{D_{i}\psi(r_{0}e_{1})=0}$ and $\smash{D_{ii}\psi(r_{0}e_{1})\leq\frac{1}{r_{0}}D_{1}\psi(r_{0}e_{1})}$
for $1<i\leq N$ (see Lemma \ref{lem:jets of radial function} below).
Thus by setting $\smash{\phi(r):=\psi(re_{1})}$, we obtain from (\ref{eq:weak is visc some})
\begin{align*}
f(r_{0}) & >-\left|D_{1}\psi(r_{0}e_{1})\right|^{q-2}\big(D_{11}\psi(r_{0}e_{1})+\sum_{i=2}^{N}D_{ii}\psi(r_{0}e_{1})+(p-2)D_{11}\psi(r_{0}e_{1})\big)\\
 & \geq-\left|D_{1}\psi(r_{0}e_{1})\right|^{q-2}\big((p-1)D_{11}\psi(r_{0}e_{1})+\frac{N-1}{r_{0}}D_{1}\psi(r_{0}e_{1})\big)\\
 & =-\kappa\left|\phi^{\prime}(r_{0})\right|^{q-2}\big((q-1)\phi^{\prime\prime}(r_{0})+\frac{d-1}{r_{0}}\phi^{\prime}(r_{0})\big),
\end{align*}
where we used that $\smash{\kappa=\frac{p-1}{q-1}}$ and $\smash{d=\frac{(N-1)(q-1)}{p-1}+1}$.
Since the above inequality is strict, by continuity it remains true
in some interval $I\Subset(0,R)$ containing $r_{0}$. In other words,
for any $r\in I$ it holds that
\begin{align*}
f(r)r^{d-1}> & -\kappa\left|\phi^{\prime}(r)\right|^{q-2}\big((q-1)\phi^{\prime\prime}(r)+\frac{d-1}{r}\phi^{\prime}(r)\big)r^{d-1}\\
= & -\kappa\big(\left|\phi^{\prime}(r)\right|^{q-2}\phi^{\prime}(r)r^{d-1}\big)^{\prime}.
\end{align*}
Multiplying this by a non-negative function $\eta\in C_{0}^{\infty}(I)$
and integrating by parts we find that
\begin{equation}
\int_{I}\kappa\left|\phi^{\prime}\right|^{q-2}\phi^{\prime}\eta^{\prime}r^{d-1}-\eta fr^{d-1}\d r\leq0.\label{eq:weak is visc 3}
\end{equation}
We set
\[
\overline{\phi}(r):=\phi(r)+l,
\]
where $\smash{l:=\min_{r\in\partial I}(v_{\ast}(r)-\phi(r))>0}$.
Then $\overline{\phi}$ still satisfies (\ref{eq:weak is visc 3}).
Since $\smash{\overline{\phi}\leq v_{\ast}}$ on $\partial I$, it
follows from a comparison principle that $\smash{\overline{\phi}\leq v_{\ast}}$
in $I$ (see Lemma \ref{lem:comparison lemma} below). But this is
a contradiction since $\smash{\overline{\phi}(r_{0})=v_{\ast}(r_{0})}$
and $l>0$.

Consider then the case $x_{0}=0$. Denote $\xi:=D\varphi(0)/\left|D\varphi(0)\right|$
and define a function $\phi:[0,R)\rightarrow\mathbb{R}$ by
\[
\phi(r):=\varphi(r\xi).
\]
Then for $r>0$ we have
\[
\phi^{\prime}(r)=\xi\cdot D\varphi(r\xi)\quad\text{and}\quad\phi^{\prime\prime}(r)=\xi^{\prime}D^{2}\varphi(r\xi)\xi.
\]
Since $\xi\cdot D\varphi(0)=\left|D\varphi(0)\right|>0$, it follows
by continuity that there are constants $M,\delta>0$ such that
\begin{equation}
\phi^{\prime}(r)\geq M\quad\text{when }r\in(0,\delta).\label{eq:weak is visc 5}
\end{equation}
Hence the quantity $\smash{(d-1)r^{-1}\phi^{\prime}(r)}$ is large
when $r>0$ is small. Therefore, since $f$ and $\smash{\phi^{\prime\prime}}$
are bounded in $\smash{(0,R^{\prime})}$ for any $\smash{R^{\prime}<R}$,
there exists $\delta>0$ such that for all $r\in(0,\delta)$ we have
\begin{align*}
f(r)\geq & -\kappa\left|\phi^{\prime}(r)\right|^{q-2}\big((q-1)\phi^{\prime\prime}(r)+\frac{d-1}{r}\phi^{\prime}(r)\big)\\
= & -\kappa\big(\left|\phi^{\prime}(r)\right|^{q-2}\phi^{\prime}(r)r^{d-1}\big)^{\prime}r^{1-d}.
\end{align*}
In other words, for all $r\in(0,\delta)$ it holds that
\begin{equation}
-\kappa\big(\left|\phi^{\prime}(r)\right|^{q-2}\phi^{\prime}(r)r^{d-1}\big)^{\prime}-f(r)r^{d-1}\leq0.\label{eq:weak is visc 4}
\end{equation}
On the other hand, since $\phi^{\prime}$ is bounded in $(0,\delta)$
we have $\phi\in W^{1,q}(r^{d-1},(0,\delta))$. Moreover, for any
non-negative $\smash{\zeta\in C_{0}^{\infty}(-\delta,\delta)}$ we
obtain using integration by parts
\begin{align*}
 & \int_{0}^{\delta}\kappa\left|\phi^{\prime}\right|^{q-2}\phi^{\prime}\zeta^{\prime}r^{d-1}-\zeta fr^{d-1}\d r\\
 & \ =\lim_{h\rightarrow0}\int_{h}^{\delta}\kappa\left|\phi^{\prime}\right|^{q-2}\phi^{\prime}\zeta^{\prime}r^{d-1}-\zeta fr^{d-1}\d r\\
 & \ =\lim_{h\rightarrow0}\bigg(\int_{h}^{\delta}-\kappa\big(\left|\phi^{\prime}\right|^{q-2}\phi^{\prime}r^{d-1}\big)^{\prime}\zeta-\zeta fr^{d-1}\d r-\kappa\left|\phi^{\prime}(h)\right|^{q-2}\phi^{\prime}(h)h^{d-1}\zeta(h)\bigg)\leq0,
\end{align*}
where we used (\ref{eq:weak is visc 4}) and noticed that the last
term converges to zero because $d-1>0$ and $\smash{\phi^{\prime}\geq M>0}$
in $(0,\delta)$. Thus $\phi$ is a weak subsolution to (\ref{eq:radial eq})
in $(0,\delta)$. We set
\[
\overline{\phi}(r):=\phi(r)+l,
\]
where $\smash{l:=\phi(\delta)-v_{\ast}(\delta)>0.}$ Then $\smash{\overline{\phi}\leq v_{\ast}}$
in $(0,\delta)$ by Theorem \ref{thm:comparison principle}. Hence
it follows from continuity of $\smash{\overline{\phi}}$ and definition
of $\smash{v_{\ast}}$ that $\smash{\overline{\phi}(0)\leq v_{\ast}(0)}$.
But this is a contradiction since $\smash{\overline{\phi}(0)=\varphi(0)+l=v_{\ast}(0)+l}$
and $l>0$.
\end{proof}
We still need to prove the lemmas used in the previous proof: the
comparison theorems and the following fact about the derivatives of
test functions.
\begin{lem}
\label{lem:jets of radial function}Let $u:B_{R}\rightarrow\mathbb{R}$
be radial. Assume that $\varphi\in C^{2}$ touches $u$ from below
at $re_{1}\not=0$. Then for $1<i\leq N$ we have
\[
D_{i}\varphi(re_{1})=0\quad\text{and}\quad D_{ii}\varphi(re_{1})\leq\frac{1}{r}D_{1}\varphi(re_{1}).
\]
\end{lem}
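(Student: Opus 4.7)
The plan is to exploit the radial symmetry of $u$ by restricting $\varphi$ to the sphere $\{|x|=r\}$. Since $\varphi$ touches $u$ from below at $re_1$, we have $\varphi(re_1)=u(re_1)$ and $\varphi(x)\le u(x)=u(re_1)$ for all $x$ near $re_1$ with $|x|=r$ (using that $u$ is radial and $r\ne 0$, so the sphere passes through $re_1$). In other words, the restriction of $\varphi$ to the sphere has a local maximum at $re_1$.

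To extract the required pointwise information, for each $i\in\{2,\dots,N\}$ I would plug in the smooth curve
\[
\gamma_i(t):=r\bigl(\cos t\,e_1+\sin t\,e_i\bigr),
\]
which lies on the sphere $\{|x|=r\}$ and satisfies $\gamma_i(0)=re_1$, $\gamma_i'(0)=re_i$, and $\gamma_i''(0)=-re_1$. The function $g_i(t):=\varphi(\gamma_i(t))$ then has a local maximum at $t=0$.

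The first-order necessary condition $g_i'(0)=0$ gives
\[
0=D\varphi(re_1)\cdot\gamma_i'(0)=r\,D_i\varphi(re_1),
\]
which yields $D_i\varphi(re_1)=0$. Computing the second derivative via the chain rule,
\[
g_i''(0)=\gamma_i'(0)^{\prime}D^2\varphi(re_1)\gamma_i'(0)+D\varphi(re_1)\cdot\gamma_i''(0)=r^2 D_{ii}\varphi(re_1)-r\,D_1\varphi(re_1),
\]
and the second-order necessary condition $g_i''(0)\le 0$ yields
\[
r^2 D_{ii}\varphi(re_1)\le r\,D_1\varphi(re_1),
\]
from which the claimed inequality follows after dividing by $r^2>0$.

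There is no real obstacle here: the only subtlety is to pick a curve on the sphere whose first two derivatives at $0$ separate the normal and tangential directions at $re_1$, and the family $\gamma_i$ does exactly this. The condition $r\ne 0$ is essential so that the sphere is genuinely a smooth manifold through the touching point.
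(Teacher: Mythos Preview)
Your proof is correct and is in fact cleaner than the paper's. Both arguments rest on the same observation: since $u$ is constant on the sphere $\{|x|=r\}$ and $\varphi$ touches $u$ from below at $re_1$, the restriction of $\varphi$ to this sphere has a local maximum at $re_1$. You extract the first- and second-order conditions directly by parametrizing the sphere with the smooth curve $\gamma_i(t)=r(\cos t\,e_1+\sin t\,e_i)$ and reading off $g_i'(0)=0$, $g_i''(0)\le 0$. The paper instead writes the second-order Taylor expansion of $\varphi$ at $re_1$, plugs in the off-sphere point $re_1+he_i$, and then compares with the radially projected point $re_1+he_i-S(h)e_1$ on the sphere, where $S(h)=r-\sqrt{r^2-h^2}$; this forces a separate asymptotic computation $S(h)/h^2\to 1/(2r)$ and a second pass with $-he_i$ to pin down the sign of $D_i\varphi(re_1)$. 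Your curve-based argument bypasses all of this and gets both conclusions in one shot, at the modest cost of invoking the chain rule for $g_i''$. The paper's route is slightly more ``bare-hands'' (only Taylor expansions, no parametrized curves), but yours is shorter and arguably more transparent.
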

\begin{proof}
Since $\varphi\in C^{2}$, we have
\[
\varphi(y)=\varphi(re_{1})+(y-re_{1})\cdot D\varphi(re_{1})+\frac{1}{2}(y-re_{1})^{\prime}D^{2}\varphi(re_{1})(y-re_{1})+o(\left|y-re_{1}\right|^{2})
\]
as $y\rightarrow re_{1}$. Letting $y=re_{1}+he_{i}$, where $h>0$
and $1<i\leq N$, the above implies that 
\begin{equation}
hD_{i}\varphi(re_{1})+\frac{1}{2}h^{2}D_{ii}\varphi(re_{1})=\varphi(re_{1}+he_{i})-\varphi(re_{1})+o(\left|h\right|^{2})\text{ as }h\rightarrow0.\label{eq:jets of radial function 1}
\end{equation}
Let now 
\[
S(h):=r-\sqrt{r^{2}-h^{2}}
\]
so that the vector $re_{1}+he_{i}-S(h)e_{1}$ lies on the boundary
of the ball $B_{r}(0)$. Since $u$ is constant on $\partial B_{r}(0)$,
the assumption that $\varphi$ touches $u$ from below at $re_{1}$
implies
\[
\varphi(re_{1})=u(re_{1})=u(re_{1}+he_{i}-S(h)e_{1})\geq\varphi(re_{1}+he_{i}-S(h)e_{1}).
\]
Combining this with (\ref{eq:jets of radial function 1}) we obtain
\begin{equation}
hD_{i}\varphi(re_{1})+\frac{1}{2}h^{2}D_{ii}\varphi(re_{1})\leq\varphi(re_{1}+he_{i})-\varphi(re_{1}+he_{i}-S(h)e_{1})+o(\left|h\right|^{2}).\label{eq:jets of radial function 2}
\end{equation}
Since $\varphi\in C^{2}$, there is $M>0$ such that for all $a,z\in B_{1}(re_{1})$
we have the estimate
\[
\varphi(a)-\varphi(z)\leq-(z-a)\cdot D\varphi(a)+M\left|z-a\right|^{2}.
\]
Setting $a=re_{1}+he_{i}$ and $z=re_{1}+he_{i}-S(h)e_{1}$, the above
and (\ref{eq:jets of radial function 2}) lead to
\[
\frac{1}{h}D_{i}\varphi(re_{1})+\frac{1}{2}D_{ii}\varphi(re_{1})\leq\frac{S(h)}{h^{2}}D_{1}\varphi(re_{1}+he_{i})+M\frac{\left|S(h)\right|}{h^{2}}^{2}+\frac{o(\left|h\right|^{2})}{h^{2}}.
\]
Observe that $\frac{S(h)}{h^{2}}\rightarrow\frac{1}{2r}$ as $h\rightarrow0$,
so the left hand side of the above inequality tends to $\frac{1}{2r}D_{1}\varphi(re_{1})$.
Thus we must have $D_{i}\varphi(re_{1})\leq0$. On the other hand,
repeating the previous arguments, but instead selecting $y=re_{1}-he_{i}$
at the beginning, we can deduce the estimate
\[
-\frac{1}{h}D_{i}\varphi(re_{1})+\frac{1}{2}D_{ii}\varphi(re_{1})\leq\frac{S(h)}{h^{2}}D_{1}\varphi(re_{1}-he_{i})+M\frac{\left|S(h)\right|^{2}}{h^{2}}+\frac{o(\left|h\right|^{2})}{h^{2}},
\]
from which it follows that $D_{i}\varphi(re_{1})\geq0$. Thus $D_{i}\varphi(re_{1})=0$
and we may let $h\rightarrow0$ to obtain that $D_{ii}(re_{1})\leq\frac{1}{r}D_{1}\varphi(re_{1})$.
\end{proof}
\begin{thm}[Comparison principle]
\label{thm:comparison principle}Let $w$ and $v$ respectively be
bounded weak sub- and supersolutions to (\ref{eq:radial eq}) in $(0,R)$.
Assume that we have
\[
\limsup_{r\rightarrow R}w(r)\leq\liminf_{r\rightarrow R}v(r)
\]
Then $w\leq v$ a.e.\ in $(0,R)$.
\end{thm}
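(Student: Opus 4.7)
The plan is to adapt the standard comparison argument for $q$-Laplacian-type equations: subtract the two weak formulations tested against the same non-negative function built from the positive part of $w - v - \varepsilon$, then invoke strict monotonicity of $a \mapsto |a|^{q-2}a$.

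Fix $\varepsilon > 0$. Working with the continuous representatives of $w$ and $v$ on $(0,R)$, the boundary hypothesis supplies some $R' \in (0,R)$ with $w(r) - v(r) < \varepsilon$ for all $r \in (R', R)$; hence $\phi := (w - v - \varepsilon)_+$ is supported in $[0, R']$. Since $w, v \in W^{1,q}(r^{d-1},(0,R'))$, the standard truncation rule for Sobolev functions gives $\phi \in W^{1,q}(r^{d-1},(0,R))$ with $\phi' = (w' - v')\,\chi_{\{w - v > \varepsilon\}}$ a.e.\ in $(0,R)$, and by Lemma \ref{lem:admissible test functions} $\phi$ is admissible in both the supersolution inequality for $v$ and the subsolution inequality for $w$. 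Testing these two inequalities with the common $\phi \geq 0$ and subtracting cancels the $f$-terms, leaving
\[
\int_{\{w - v > \varepsilon\}} \kappa \bigl(|w'|^{q-2} w' - |v'|^{q-2} v'\bigr)(w' - v')\, r^{d-1} \, dr \leq 0.
\]
Since $q > 1$, the elementary monotonicity inequality $(|a|^{q-2}a - |b|^{q-2}b)(a-b) \geq 0$, strict unless $a = b$, forces $w' = v'$ a.e.\ on $\{w - v > \varepsilon\}$, and therefore $\phi' = 0$ a.e.\ in $(0,R)$.

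Finally, since $\phi$ restricted to any $(\delta, R)$ lies in the ordinary $W^{1,q}(\delta, R)$ (the weight $r^{d-1}$ is bounded above and below on such intervals) and has a continuous representative there, $\phi$ has a continuous representative on $(0, R]$; combined with $\phi' = 0$ a.e., this forces $\phi$ to be constant on $(0, R]$. As $\phi \equiv 0$ on $(R', R)$, this constant is zero, giving $w \leq v + \varepsilon$ a.e.\ in $(0,R)$, and letting $\varepsilon \to 0$ finishes the proof. I expect the most delicate point to be the admissibility of $\phi$ together with this closing \emph{constant-implies-zero} step, since the test-function class does not require vanishing at the origin and $\phi$ may be positive at $r=0$ a priori; the boundary hypothesis at $R$ is precisely what supplies both the compact support away from $R$ and an interior point where $\phi$ is already known to vanish.
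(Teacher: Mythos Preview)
Your proof is correct and follows essentially the same approach as the paper: test both the sub- and supersolution inequalities with $\phi=(w-v-\varepsilon)_{+}$ (admissible via Lemma~\ref{lem:admissible test functions} and the truncation lemma), subtract to cancel $f$, use the monotonicity of $a\mapsto|a|^{q-2}a$ to get $\phi'=0$ a.e., and then conclude $\phi\equiv0$ from its vanishing near $R$. Your closing explanation of why $\phi'\equiv0$ forces $\phi\equiv0$ via the unweighted $W^{1,q}_{loc}$ structure is exactly the point the paper records in one line.
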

\begin{proof}
Let $\varepsilon>0$. Then there is $0<R^{\prime}<R$ such that $w-v-\varepsilon<0$
in $(R^{\prime},R)$. We set
\[
\varphi:=\max(w-v-\varepsilon,0).
\]
By Lemma \ref{lem:max is in H1q} we have $\varphi\in W^{1,q}(r^{d-1},(0,R))$
with
\[
\varphi^{\prime}=\begin{cases}
w^{\prime}-v^{\prime}, & \text{a.e.\ in }\left\{ w>v+\varepsilon\right\} ,\\
0, & \text{a.e.\ in }(0,R)\setminus\left\{ w>v+\varepsilon\right\} .
\end{cases}
\]
By Lemma \ref{lem:admissible test functions} we may use $\varphi$
as a test function in (\ref{eq:weak super}) for $w$ and $v$. This
yields the inequalities
\begin{align*}
\int_{\left\{ w>v+\varepsilon\right\} }\kappa\left|w^{\prime}\right|^{q-2}w^{\prime}(w^{\prime}-v^{\prime})r^{d-1}\d r\leq & \int_{0}^{R}\varphi fr^{d-1}\d r,\\
\int_{\left\{ w>v+\varepsilon\right\} }\kappa\left|v^{\prime}\right|^{q-2}v^{\prime}(w^{\prime}-v^{\prime})r^{d-1}\d r\geq & \int_{0}^{R}\varphi fr^{d-1}\d r.
\end{align*}
Subtracting the second inequality from the first we get
\[
\int_{\left\{ w>v+\varepsilon\right\} }\kappa(\left|w^{\prime}\right|^{q-2}w^{\prime}-\left|v^{\prime}\right|^{q-2}v^{\prime})(w^{\prime}-v^{\prime})r^{d-1}\d r\leq0.
\]
Since $\smash{(\left|a\right|^{q-2}a-\left|b\right|^{q-2}b)\left(a-b\right)\geq0}$
for all $a,b\in\mathbb{R},$ it follows that $\smash{w^{\prime}-v^{\prime}\equiv0}$
in $\left\{ w>v+\varepsilon\right\} $. Hence $\smash{\varphi^{\prime}\equiv0}$
a.e.\ in $(0,R)$. This implies that also $\varphi\equiv0$ a.e.\ in
$(0,R)$ since we have $\smash{\varphi\in W_{loc}^{1,q}(0,R)}$ and
$\varphi\equiv0$ in $\smash{(R^{\prime},R)}$. Consequently $w\leq v-\varepsilon$
a.e.\ in $(0,R)$ and letting $\varepsilon\rightarrow0$ finishes
the proof.
\end{proof}
\begin{lem}
\label{lem:comparison lemma}Let $v$ be a bounded weak supersolution
to (\ref{eq:radial eq}) in $(0,R)$. Let $I\Subset(0,R)$ be an interval
and suppose that $\phi\in C^{2}(\overline{I})$ satisfies
\begin{equation}
\int_{I}\left|\phi^{\prime}\right|^{q-2}\phi^{\prime}\varphi^{\prime}r^{d-1}-\varphi fr^{d-1}\d r\le0\label{eq:comp b 1}
\end{equation}
for all $\varphi\in C_{0}^{\infty}(I)$. Assume also that for all
$r_{0}\in\partial I$ we have
\[
\limsup_{r\rightarrow r_{0}}\phi(r)\leq\liminf_{r\rightarrow r_{0}}v(r).
\]
Then $\phi\leq v$ a.e.\ in $I$. 
\end{lem}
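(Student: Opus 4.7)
The plan is to adapt the argument of Theorem \ref{thm:comparison principle}, replacing the global behavior at the endpoints of $(0,R)$ by a localized boundary condition on $\partial I$. Throughout I will treat the factor $\kappa$ from (\ref{eq:weak super}) as if it were also present in (\ref{eq:comp b 1}); since $\kappa>0$, this discrepancy (presumably a typo, as the lemma is invoked in the proof of Theorem \ref{thm:weak is visc} exactly for (\ref{eq:weak is visc 3}) which does have $\kappa$) plays no role in the sign analysis.

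First I would fix $\varepsilon>0$ and produce an admissible test function supported compactly inside $I$. Since $I \Subset (0,R)$, the weight $r^{d-1}$ is bounded above and below on $I$, so $W^{1,q}(r^{d-1},I)$ coincides with the ordinary $W^{1,q}(I)$; in particular the continuous representative of $v$ is defined on a neighborhood of $\overline{I}$. Combining this with the $C^{2}(\overline{I})$ regularity of $\phi$ and the hypothesis $\limsup_{r\to r_{0}}\phi(r)\leq\liminf_{r\to r_{0}}v(r)$ at each $r_{0}\in\partial I$, I can find a neighborhood $U$ of $\partial I$ in $I$ on which $\phi<v+\varepsilon$. Setting
\[
\varphi := \max(\phi - v - \varepsilon,\, 0),
\]
extended by zero outside $I$, yields a function whose support is a compact subset of $I$, in particular contained in $[0,R')$ for some $R'<R$.

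Next I would verify admissibility. By Lemma \ref{lem:max is in H1q}, $\varphi \in W^{1,q}(r^{d-1},(0,R'))$ with weak derivative
\[
\varphi' = \begin{cases} \phi'-v' & \text{a.e.\ on } \{\phi>v+\varepsilon\},\\ 0 & \text{elsewhere.}\end{cases}
\]
Thus Lemma \ref{lem:admissible test functions} makes $\varphi$ an admissible test function in the weak supersolution inequality for $v$, and a standard approximation by $C_{0}^{\infty}(I)$ functions (legitimate since the weight is bounded on $\supp\varphi$) also makes it admissible in (\ref{eq:comp b 1}) for $\phi$. Inserting $\varphi$ in both inequalities and subtracting gives
\[
\int_{\{\phi>v+\varepsilon\}}\bigl(|\phi'|^{q-2}\phi' - |v'|^{q-2}v'\bigr)(\phi'-v')\,r^{d-1}\,dr \leq 0,
\]
where the $f$ contributions cancel. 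The monotonicity inequality $(|a|^{q-2}a-|b|^{q-2}b)(a-b)\geq 0$ forces the integrand to vanish a.e., so $\varphi'\equiv 0$ a.e.\ in $I$. Together with $\varphi\equiv 0$ on $U\cap I$ and connectedness of $I$, this yields $\varphi\equiv 0$ a.e.\ in $I$, i.e.\ $\phi\leq v+\varepsilon$ a.e.\ in $I$. Letting $\varepsilon\to 0$ concludes the proof.

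The only real obstacle is the first step: extracting a test function with support compact in $I$ from the one-sided $\limsup$--$\liminf$ hypothesis. The key is that $\phi$ is genuinely continuous on $\overline{I}$ while $v$ only has an interior continuous representative, but the boundary inequality and the shift by $\varepsilon$ together provide the required strict separation near $\partial I$. Everything afterwards is a direct transcription of the monotonicity/test-function argument already used in Theorem \ref{thm:comparison principle}.
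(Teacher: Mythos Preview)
Your proposal is correct and follows essentially the same route as the paper's proof: restrict to $I$ (where the weighted and unweighted Sobolev spaces coincide), set $\varphi=(\phi-v-\varepsilon)_{+}$, use the boundary hypothesis to ensure $\supp\varphi\Subset I$, test both inequalities with $\varphi$ after approximation, and conclude via the monotonicity identity exactly as in Theorem~\ref{thm:comparison principle}. Your remark about the missing $\kappa$ is also apt---it is indeed harmless since $\kappa>0$.
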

\begin{proof}
Since $v\in W^{1,q}(r^{d-1},(0,R))$ we have $v_{|I}\in W^{1,q}(I)$
with $(v_{|I})^{\prime}=v^{\prime}$ in $I$. Moreover, we have
\begin{equation}
\int_{0}^{R}\sa(v_{|I})^{\prime}\sa^{q-2}(v_{|I})^{\prime}\varphi^{\prime}r^{d-1}-\varphi fr^{d-1}\d r\geq0\label{eq:comp b 2}
\end{equation}
 for all $\varphi\in C_{0}^{\infty}(I)$. For $\varepsilon>0$, we
set
\[
\varphi:=(\phi-v_{|I}-\varepsilon)_{+}.
\]
Then $\varphi\in W^{1,q}(I)$ and $\supp\varphi\Subset I$. Thus we
may after approximation use $\varphi$ as a test function in (\ref{eq:comp b 1})
and (\ref{eq:comp b 2}). It then follows similarly as in the proof
of Theorem \ref{thm:comparison principle} that $\varphi\equiv0$
a.e.\ in $I$ and letting $\varepsilon\rightarrow0$ finishes the
proof.
\end{proof}

\section{Viscosity solutions are weak solutions}

We show that bounded radial viscosity supersolutions to (\ref{eq:normplap f})
are weak supersolutions to (\ref{eq:radial eq}). More precisely,
we prove the following theorem.
\begin{thm}
\label{thm:visc is weak}Let $u$ be a bounded radial viscosity supersolution
to (\ref{eq:normplap f}) in $B_{R}$. Then $v(r):=u(re_{1})$ is
a weak supersolution to (\ref{eq:radial eq}) in $(0,R)$.
\end{thm}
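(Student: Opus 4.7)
The strategy, following Julin and Juutinen \cite{newequivalence}, is to regularize $u$ by its inf-convolution
\[
u_\varepsilon(x) := \inf_{y \in B_R}\bigl\{ u(y) + \tfrac{1}{2\varepsilon}|x-y|^2 \bigr\}.
\]
Because $u$ is radial so is $u_\varepsilon$, so one may write $u_\varepsilon(x) = v_\varepsilon(|x|)$. Standard inf-convolution theory gives that $u_\varepsilon$ is semi-concave (hence locally Lipschitz and twice differentiable a.e.\ in the sense of Alexandrov), that $u_\varepsilon \nearrow u$ pointwise as $\varepsilon \to 0$, and that $u_\varepsilon$ is a viscosity supersolution to a perturbation of \eqref{eq:normplap f} on a slightly smaller ball $B_{R_\varepsilon}$ with $R_\varepsilon \uparrow R$, the source being replaced by some $f_\varepsilon \to f$ locally uniformly. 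These regularity properties transfer to $v_\varepsilon$ by radial symmetry.

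Next I extract a pointwise differential inequality for $v_\varepsilon$. At a.e.\ $r_0 \in (0, R_\varepsilon)$ the function $v_\varepsilon$ admits a second-order Taylor expansion at $r_0$, and correspondingly $u_\varepsilon$ is Alexandrov-twice-differentiable at $r_0 e_1$ with $D_i u_\varepsilon(r_0 e_1) = 0$ for $i > 1$ and $D_{ii} u_\varepsilon(r_0 e_1) = v_\varepsilon'(r_0)/r_0$ by the radial symmetry. A Jensen-type argument, applying the viscosity supersolution definition to the quadratic paraboloid supplied by the Taylor expansion, shows that the viscosity inequality holds in the classical pointwise sense at such points. When $v_\varepsilon'(r_0) \neq 0$ the computation from the introduction converts it into
\[
-\kappa \bigl( |v_\varepsilon'|^{q-2} v_\varepsilon'\, r^{d-1} \bigr)'(r_0) \geq f_\varepsilon(r_0)\, r_0^{d-1},
\]
while on the set where $v_\varepsilon'$ vanishes the nonlinear flux $|v_\varepsilon'|^{q-2} v_\varepsilon'$ vanishes a.e.\ and so contributes nothing to the weak formulation. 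Multiplying by a non-negative $\varphi \in C_0^\infty(-R_\varepsilon, R_\varepsilon)$ and integrating by parts over $(0, R_\varepsilon)$ then gives the approximate weak supersolution inequality, the boundary term at $r = 0$ vanishing because $v_\varepsilon$ is locally Lipschitz and $d > 1$.

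The final step is to let $\varepsilon \to 0$. Monotone convergence $v_\varepsilon \nearrow v$ together with a standard Caccioppoli-type estimate (testing the approximate weak formulation against a cutoff times $v_\varepsilon$ minus a constant) provides uniform local bounds on $v_\varepsilon'$ in $L^q(r^{d-1})$, so $v \in W^{1,q}(r^{d-1}, (0, R'))$ for every $R' < R$ and $v_\varepsilon' \rightharpoonup v'$ along a subsequence. A Minty-type monotonicity argument, exploiting strict monotonicity of $a \mapsto |a|^{q-2}a$, identifies the weak limit of the nonlinear flux and allows the passage to the limit, yielding the desired weak inequality \eqref{eq:weak super} against any non-negative $\varphi \in C_0^\infty(-R, R)$. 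The main technical obstacle is precisely this last identification of the nonlinear limit without a priori strong convergence of $v_\varepsilon'$, together with the careful bookkeeping near $r = 0$ where the weight $r^{d-1}$ degenerates badly (it is not an $A_q$-weight in general) while the test functions are not required to vanish there.
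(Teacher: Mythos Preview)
Your overall architecture matches the paper's---inf-convolution, pointwise inequality at Alexandrov points, integration to a weak formulation, Caccioppoli, then pass to the limit---but there is a genuine gap in the singular range $1<q\le 2$, and it stems from your choice of the \emph{quadratic} inf-convolution.

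The problem is your treatment of the set $\{v_\varepsilon'=0\}$. You say the flux $|v_\varepsilon'|^{q-2}v_\varepsilon'$ vanishes there and hence ``contributes nothing to the weak formulation.'' But the weak inequality you want is
\[
\int_0^{R_\varepsilon}\kappa|v_\varepsilon'|^{q-2}v_\varepsilon'\,\varphi' r^{d-1}\,dr-\int_0^{R_\varepsilon}\varphi f_\varepsilon r^{d-1}\,dr\ge 0,
\]
and the second integral does not vanish on $\{v_\varepsilon'=0\}$. Concretely, if $v_\varepsilon'\equiv 0$ on some interval $(a,b)$ and $f_\varepsilon>0$ there, testing with $\varphi\in C_0^\infty(a,b)$ gives a strictly negative value. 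So one must show $f_\varepsilon(r)\le 0$ at (a.e.) points where $v_\varepsilon'(r)=0$. The paper does exactly this in Lemma~\ref{lem:inf conv pointwise}: at such a point $x=re_1$ one has $x_\varepsilon=x$, so the barrier $\phi(y)=u(x)-|x-y|^{\hat q}/(\hat q\varepsilon^{\hat q-1})$ touches $u$ from below, and the viscosity condition yields $f(r)\le 0$ \emph{provided} $|D\phi|^{q-2}\Delta_p^N\phi\to 0$ as $y\to x$. This last limit requires $(q-2)(\hat q-1)+\hat q-2>0$, which is why the paper takes $\hat q>2$ large. With your quadratic kernel ($\hat q=2$) and $q<2$, the quantity $-|D\phi|^{q-2}\Delta_p^N\phi\sim C|y-x|^{q-2}\to+\infty$, so the $\limsup$ condition in Definition~\ref{def:visc sol} is vacuous and gives no information on $f$. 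For $q=2$ it gives only $f(r)\le (N+p-2)/\varepsilon$, again not $f\le 0$.

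A secondary point: the step ``multiplying by $\varphi$ and integrating by parts'' is not automatic. The Alexandrov second derivative is only the absolutely continuous part of the distributional second derivative of the semi-concave function $v_\varepsilon$, so the a.e.\ pointwise inequality does not integrate directly. The paper mollifies ($v_{\varepsilon,j}$), uses the one-sided bound $v_{\varepsilon,j}''\le C$ from semi-concavity to justify Fatou, and for $1<q\le 2$ inserts a further $\delta$-regularization $G_\delta$ to cope with the singularity of $|v'|^{q-2}$ (Lemmas~\ref{lem:inf conv is weak q > 2} and~\ref{lem:inf-conv is weak}). Your sketch would need analogous care. The final limit in the paper is done by first upgrading weak to strong convergence of $v_\varepsilon'$ via the test function $(v-v_\varepsilon)\xi^q$; your Minty-type identification is a reasonable alternative in spirit, but note that you only have an inequality, not an equation, so the usual Minty trick needs the one-sided version.
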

As a corollary of Theorem \ref{thm:visc is weak}, we obtain the uniqueness
of radial viscosity solutions to (\ref{eq:normplap f}). We also have
the following comparison result for radial super- and subsolutions.
However, the full uniqueness and comparison principle still remain
open as far as we know.
\begin{lem}
\label{lem:visc comp}Let $h,u\in C(B_{R})$ be bounded radial viscosity
sub- and supersolutions to (\ref{eq:normplap f}) in $B_{R}$, respectively.
Assume that for all $x_{0}\in\partial B_{R}$ it holds 
\[
\limsup_{x\rightarrow x_{0}}h(x)\leq\liminf_{x\rightarrow x_{0}}u(x).
\]
Then $h\leq u$ in $B_{R}$.
\end{lem}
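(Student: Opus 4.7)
The plan is to reduce the statement to the one-dimensional weak comparison principle in Theorem \ref{thm:comparison principle}. Since $h$ and $u$ are radial and continuous on $B_R$, I set $w(r):=h(re_1)$ and $v(r):=u(re_1)$ for $r\in[0,R)$, so that $w,v\in C[0,R)$; it then suffices to show $w\leq v$ on $[0,R)$, since radial symmetry together with continuity at the origin then delivers $h\leq u$ on $B_R$.

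The first step is to transfer the viscosity information to the one-dimensional weak setting. Theorem \ref{thm:visc is weak} yields directly that $v$ is a weak supersolution to (\ref{eq:radial eq}) in $(0,R)$. For $w$, I would invoke the subsolution analog of Theorem \ref{thm:visc is weak}; equivalently, one can apply Theorem \ref{thm:visc is weak} itself to $-h$, which, using that $\Delta_p^N$ reverses sign under $u\mapsto -u$, is a bounded radial viscosity supersolution to $-|Du|^{q-2}\Delta_p^N u=-f$ in $B_R$. The resulting weak supersolution inequality for $-w$ against source $-f$ translates, after negation and the identity $|-w'|^{q-2}(-w')=-|w'|^{q-2}w'$, into the weak subsolution inequality for $w$ against source $f$.

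The second step is to verify the boundary hypothesis of Theorem \ref{thm:comparison principle}: by radial symmetry, the assumption that $\limsup_{x\to x_0}h(x)\leq\liminf_{x\to x_0}u(x)$ for all $x_0\in\partial B_R$ collapses to $\limsup_{r\to R}w(r)\leq\liminf_{r\to R}v(r)$. Applying Theorem \ref{thm:comparison principle} then gives $w\leq v$ almost everywhere in $(0,R)$, and continuity of $w,v$ on $[0,R)$ upgrades this to a pointwise inequality on all of $[0,R)$, completing the argument. The only technicality, and thus the main (minor) obstacle, is articulating the subsolution counterpart of Theorem \ref{thm:visc is weak} cleanly, but this follows either via the sign-flip trick above or by inspecting the proof of Theorem \ref{thm:visc is weak} with inequalities reversed throughout.
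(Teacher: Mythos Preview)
Your proposal is correct and follows essentially the same approach as the paper: define $w(r)=h(re_1)$ and $v(r)=u(re_1)$, invoke Theorem~\ref{thm:visc is weak} (and its subsolution analog) to pass to weak sub- and supersolutions of (\ref{eq:radial eq}), apply the one-dimensional comparison principle of Theorem~\ref{thm:comparison principle}, and upgrade the a.e.\ inequality to a pointwise one by continuity. The paper's proof is more terse and simply asserts the subsolution case of Theorem~\ref{thm:visc is weak} without elaboration, whereas you helpfully spell out the sign-flip justification.
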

\begin{proof}
By Theorem \ref{thm:visc is weak}, the functions $w(r):=h(re_{1})$
and $v(r):=u(re_{1})$ are weak sub- and supersolutions to (\ref{eq:radial eq})
in $(0,R)$, respectively. Hence by Theorem \ref{thm:comparison principle}
we have $w\leq v$ a.e.\ in $(0,R)$. It follows from continuity
that $h\leq u$ in $B_{R}$.
\end{proof}
\begin{cor}
Let $u,h\in C(\overline{B_{R}})$ be radial viscosity solutions to
(\ref{eq:normplap f}) in $B_{R}$ such that $u=h$ on $\partial B_{R}$.
Then $u=h$.
\end{cor}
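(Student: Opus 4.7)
The plan is to deduce this uniqueness statement as an immediate two-sided application of the comparison result in Lemma \ref{lem:visc comp}. Since $u$ and $h$ are both radial viscosity \emph{solutions}, by Definition \ref{def:visc sol} each is simultaneously a viscosity subsolution and a viscosity supersolution of (\ref{eq:normplap f}). Moreover, they both belong to $C(\overline{B_R})$ and agree on $\partial B_R$, so in particular each is bounded on $B_R$ (by compactness of $\overline{B_R}$).

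The first step is to verify the boundary hypothesis of Lemma \ref{lem:visc comp} in both directions. For any $x_0 \in \partial B_R$, continuity of $h$ and $u$ up to the boundary together with $h = u$ on $\partial B_R$ yields
\[
\limsup_{x\rightarrow x_0} h(x) = h(x_0) = u(x_0) = \liminf_{x\rightarrow x_0} u(x),
\]
and by symmetry the analogous equality holds with the roles of $h$ and $u$ swapped. Thus the boundary comparison in Lemma \ref{lem:visc comp} holds with equality in both directions.

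Next, I apply Lemma \ref{lem:visc comp} with $h$ viewed as a viscosity subsolution and $u$ as a viscosity supersolution, which gives $h \leq u$ in $B_R$. Then I apply the same lemma with the roles reversed — now $u$ is the subsolution and $h$ the supersolution — to obtain $u \leq h$ in $B_R$. Combining the two inequalities yields $u = h$ throughout $B_R$, and equality on $\partial B_R$ is given, so $u = h$ on $\overline{B_R}$.

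There is essentially no obstacle here beyond checking that the hypotheses of Lemma \ref{lem:visc comp} are met; the real content is buried in Theorem \ref{thm:visc is weak} (translating to the one-dimensional weighted problem) and the weak comparison principle Theorem \ref{thm:comparison principle}, both of which are already available. The only minor point to keep in mind is that one must invoke Lemma \ref{lem:visc comp} twice rather than trying to use it once with a signed difference, because the comparison machinery is phrased for a sub-/supersolution pair rather than for differences of solutions.
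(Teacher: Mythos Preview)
Your proof is correct and matches the paper's intended approach: the corollary is stated immediately after Lemma \ref{lem:visc comp} without proof, precisely because it follows by applying that lemma twice with the roles of sub- and supersolution interchanged, exactly as you do. Your verification of the boundary hypothesis via continuity on $\overline{B_R}$ and the observation that $C(\overline{B_R})$ ensures boundedness are the only details needed.
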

One way to prove that viscosity solutions are weak solutions is by
using a comparison principle \cite{equivalence_plaplace}. As mentioned
however, full comparison principle for the equation (\ref{eq:normplap f})
is open and Lemma \ref{lem:visc comp} is not \textit{a priori} available.
Therefore we use the method developed by Julin and Juutinen \cite{newequivalence}.
The idea is to approximate a viscosity supersolution $u$ by its inf-convolution
\[
u_{\varepsilon}(x):=\inf_{y\in B_{R}}\left\{ u(y)+\frac{\left|x-y\right|^{\hat{q}}}{\hat{q}\varepsilon^{\hat{q}-1}}\right\} ,
\]
where $\varepsilon>0$ and $\hat{q}>2$ is a fixed constant so large
that $q-2+(\hat{q}-2)/(\hat{q}-1)>0$. Then $u_{\varepsilon}\rightarrow u$
pointwise in $B_{R}$ and it is standard to show that $u_{\varepsilon}$
is a viscosity supersolution to 
\begin{equation}
-\left|Du_{\varepsilon}\right|^{q-2}\Delta_{p}^{N}u_{\varepsilon}\geq f_{\varepsilon}(\left|x\right|)\quad\text{in }B_{R_{\varepsilon}},\label{eq:lala}
\end{equation}
where $f_{\varepsilon}(r):=\inf_{\left|r-s\right|\leq\rho(\varepsilon)}f(s)$,
$R_{\varepsilon}:=R-\rho(\varepsilon)$ and $\rho(\varepsilon)\rightarrow0$
as $\varepsilon\rightarrow0$. Moreover, $u_{\varepsilon}$ is semi-concave
by definition and thus twice differentiable almost everywhere by Alexandrov's
theorem (see e.g. \cite[p273]{measuretheoryevans}). Hence $u_{\varepsilon}$
satisfies the equation (\ref{eq:lala}) pointwise almost everywhere.
Since $u_{\varepsilon}$ is still radial, we can perform a radial
transformation on (\ref{eq:lala}) to obtain after mollification arguments
that $v_{\varepsilon}(r):=u_{\varepsilon}(re_{1})$ is a weak supersolution
to $-\kappa\Delta_{q}^{d}v_{\varepsilon}=f_{\varepsilon}$ in $(0,R_{\varepsilon})$.
Caccioppoli's estimate then implies that $v_{\varepsilon}$ converges
to $v$ in the weighted Sobolev space up to a subsequence and we obtain
that $v_{\varepsilon}$ is a weak supersolution. 

Before beginning the proof of Theorem \ref{thm:visc is weak}, we
collect some well known properties of inf-convolution in the following
lemma (see e.g.\ \cite{userguide,nikos,newequivalence}).
\begin{lem}
\label{lem:inf conv properties}Assume that $u:B_{R}\rightarrow\mathbb{R}$
is bounded and lower semicontinuous. Then the inf-convolution $u_{\varepsilon}$
has the following properties.
\begin{enumerate}
\item We have $u_{\varepsilon}\leq u$ and $u_{\varepsilon}\rightarrow u$
pointwise in $B_{R}$ as $\varepsilon\rightarrow0$.
\item There exists $\rho(\varepsilon)>0$ such that
\[
u_{\varepsilon}(x)=\inf_{y\in B_{\rho(\varepsilon)}(x)\cap B_{R}}\left\{ u(y)+\frac{1}{q\varepsilon^{\hat{q}-1}}\left|x-y\right|^{\hat{q}}\right\} 
\]
and $\rho(\varepsilon)\rightarrow0$ as $\varepsilon\rightarrow0$.
In fact we can choose $\rho(\varepsilon)=\left(q\varepsilon^{\hat{q}-1}\mathrm{osc}_{B_{R}}\thinspace u\right)^{\frac{1}{\hat{q}}}$.
\item Denote $R_{\varepsilon}:=R-\rho(\varepsilon)$. Then $u_{\varepsilon}$
is semi-concave in $B_{R_{\varepsilon}}$. Moreover, for any $x\in B_{R_{\varepsilon}}$
there is $x_{\varepsilon}\in\overline{B_{\rho(\varepsilon)}(x)}$
such that $u_{\varepsilon}(x)=u(x_{\varepsilon})+\frac{1}{\hat{q}\varepsilon^{\hat{q}-1}}\left|x-x_{\varepsilon}\right|^{\hat{q}}.$
\item If $u_{\varepsilon}$ is twice differentiable at $x\in B_{R_{\varepsilon}}$,
then
\begin{align}
Du_{\varepsilon}(x)= & (x-x_{\varepsilon})\frac{\left|x-x_{\varepsilon}\right|^{\hat{q}-2}}{\varepsilon^{\hat{q}-1}},\label{eq:inf conv diff}\\
D^{2}u_{\varepsilon}(x)\leq & (\hat{q}-1)\frac{\left|x-x_{\varepsilon}\right|^{\hat{q}-2}}{\varepsilon^{\hat{q}-1}}I.\nonumber 
\end{align}
\end{enumerate}
\end{lem}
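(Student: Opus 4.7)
I would verify the four properties in the order (ii), (i), (iii), (iv), using as the essential tool the defining inequality $u_\varepsilon(x) \le u(y) + |x-y|^{\hat{q}}/(\hat{q}\varepsilon^{\hat{q}-1})$, valid for every $y \in B_R$. For (ii), specializing to $y = x$ gives $u_\varepsilon(x) \le u(x)$, so any sequence $y_n$ tending to the infimum defining $u_\varepsilon(x)$ satisfies
\[
\frac{|x-y_n|^{\hat{q}}}{\hat{q}\varepsilon^{\hat{q}-1}} \le u(x) - u(y_n) + o(1) \le \osc_{B_R} u + o(1),
\]
whence $|x-y_n| \le \rho(\varepsilon) + o(1)$ with the stated $\rho(\varepsilon) = (\hat{q}\varepsilon^{\hat{q}-1}\osc_{B_R} u)^{1/\hat{q}}$. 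When $x \in B_{R_\varepsilon}$, the closed ball $\overline{B_{\rho(\varepsilon)}(x)}$ lies inside $B_R$ and is compact, so lower semicontinuity of $u$ upgrades the minimizing sequence to an actual minimizer $x_\varepsilon$ with $|x-x_\varepsilon| \le \rho(\varepsilon)$, supplying the existence claim in (iii).

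Property (i) then follows easily: the inequality $u_\varepsilon \le u$ is built into the definition, while for convergence I would note that any near-minimizer $y_\varepsilon$ for $u_\varepsilon(x)$ satisfies $y_\varepsilon \to x$ as $\varepsilon \to 0$ by the bound above, so $\liminf_{\varepsilon \to 0} u_\varepsilon(x) \ge \liminf_{\varepsilon \to 0} u(y_\varepsilon) \ge u(x)$ by lower semicontinuity, which together with $u_\varepsilon(x) \le u(x)$ yields pointwise convergence.

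For semi-concavity in (iii), I would exhibit a constant $C_\varepsilon$ such that $u_\varepsilon(x) - C_\varepsilon |x|^2/2$ is concave. A direct calculation gives
\[
D^2_x\!\left(\frac{|x-y|^{\hat{q}}}{\hat{q}\varepsilon^{\hat{q}-1}}\right) = \frac{|x-y|^{\hat{q}-2}}{\varepsilon^{\hat{q}-1}}\left(I + (\hat{q}-2)\frac{(x-y)\otimes(x-y)}{|x-y|^{2}}\right),
\]
whose largest eigenvalue equals $(\hat{q}-1)|x-y|^{\hat{q}-2}/\varepsilon^{\hat{q}-1}$. By (ii), only $y$ with $|x-y| \le \rho(\varepsilon)$ contribute to the infimum near any given point, so this Hessian is bounded above by a constant $C_\varepsilon$ independent of $y$. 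Subtracting $C_\varepsilon |x|^2/2$ then makes every function in the defining family concave in $x$, and the infimum of concave functions is concave.

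Finally, for (iv), I would use the standard touching argument: if $u_\varepsilon$ is twice differentiable at $x_0 \in B_{R_\varepsilon}$ with minimizer $x_\varepsilon$ from (iii), then $\psi(x) := u(x_\varepsilon) + |x-x_\varepsilon|^{\hat{q}}/(\hat{q}\varepsilon^{\hat{q}-1})$ touches $u_\varepsilon$ from above at $x_0$, since $\psi(x_0) = u_\varepsilon(x_0)$ by choice of $x_\varepsilon$ and $\psi(x) \ge u_\varepsilon(x)$ everywhere by the defining inequality at $y = x_\varepsilon$. Hence $Du_\varepsilon(x_0) = D\psi(x_0) = (x_0 - x_\varepsilon)|x_0 - x_\varepsilon|^{\hat{q}-2}/\varepsilon^{\hat{q}-1}$ and $D^2 u_\varepsilon(x_0) \le D^2\psi(x_0)$, which with the eigenvalue bound from (iii) yields the stated Hessian estimate. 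I expect the main technical point to be the semi-concavity in (iii), since it depends crucially on the $|x-y|$-localization from (ii); the rest amounts to bookkeeping with the defining inequality and lower semicontinuity.
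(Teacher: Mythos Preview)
The paper does not actually prove this lemma; it is stated as a collection of ``well known properties of inf-convolution'' with references to \cite{userguide,nikos,newequivalence}, so there is no paper proof to compare against. Your sketch follows the standard route found in those references and is essentially correct.

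One small point deserves care in (iii). You write that ``only $y$ with $|x-y|\le\rho(\varepsilon)$ contribute to the infimum near any given point'' and then conclude that subtracting $C_\varepsilon|x|^2/2$ makes ``every function in the defining family concave in $x$''. As written this is slightly circular: the family of relevant $y$'s depends on $x$, so you are not literally taking an infimum of a fixed family of concave functions. The cleanest fix is to forget the $\rho(\varepsilon)$ localization here and simply observe that for \emph{all} $x,y\in B_R$ one has $|x-y|\le 2R$, so the Hessian bound $(\hat q-1)|x-y|^{\hat q-2}/\varepsilon^{\hat q-1}\le (\hat q-1)(2R)^{\hat q-2}/\varepsilon^{\hat q-1}=:C_\varepsilon$ holds uniformly; then each map $x\mapsto u(y)+|x-y|^{\hat q}/(\hat q\varepsilon^{\hat q-1})-C_\varepsilon|x|^2/2$ is genuinely concave on all of $B_R$ for every fixed $y\in B_R$, and the infimum over the fixed family $y\in B_R$ is concave. (Alternatively, your local argument can be made rigorous by noting that local concavity on an open convex set implies global concavity, but this should be stated explicitly.) With this adjustment your proof is complete.
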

\begin{rem*}
Observe that if $u$ is radial, then so is $u_{\varepsilon}$. Moreover,
if we set $v(r):=u_{\varepsilon}(re_{1})$ and assume that $v$ is
twice differentiable at $r\in(0,R_{\varepsilon})$, then by (iv) of
Lemma \ref{lem:inf conv properties} we have
\begin{align}
v_{\varepsilon}^{\prime}(r)= & (r-r_{\varepsilon})\frac{\left|r-r_{\varepsilon}\right|}{\varepsilon^{\hat{q}-1}}^{\hat{q}-2},\label{eq:inf conv radial diff-1}\\
v_{\varepsilon}^{\prime\prime}(r)\leq & \frac{\hat{q}-1}{\varepsilon}\left|v_{\varepsilon}^{\prime}(r)\right|^{\frac{\hat{q}-2}{\hat{q}-1}},\label{eq:inf conv radial hess-1}
\end{align}
where $r_{\varepsilon}\in(r-\rho(\varepsilon),r+\rho(\varepsilon))$. 
\end{rem*}
\begin{lem}
\label{lem:infconv is visc super}Assume that $u$ is a bounded viscosity
supersolution to (\ref{eq:normplap f}) in $B_{R}$. Then $u_{\varepsilon}$
is a viscosity supersolution to (\ref{eq:lala}) in $B_{R_{\varepsilon}}$.
\end{lem}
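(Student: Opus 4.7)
The plan is to translate any admissible test function for $u_{\varepsilon}$ at a point $x_{0}\in B_{R_{\varepsilon}}$ into a test function for $u$ at the associated ``optimal'' point $x_{\varepsilon}$ produced by the inf-convolution, and then pull back the supersolution inequality obtained from $u$ to conclude the one for $u_{\varepsilon}$ at $x_{0}$. I would work throughout with Definition~\ref{def:visc sol}.

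First I would take $\varphi\in C^{2}$ touching $u_{\varepsilon}$ from below at $x_{0}$ with $D\varphi(x)\neq 0$ for $x\neq x_{0}$, and apply Lemma~\ref{lem:inf conv properties}(iii) to obtain $x_{\varepsilon}\in\overline{B_{\rho(\varepsilon)}(x_{0})}$ realizing the infimum in $u_\varepsilon(x_0)$, i.e., satisfying
\[
u_{\varepsilon}(x_{0}) = u(x_{\varepsilon}) + \tfrac{1}{\hat{q}\varepsilon^{\hat{q}-1}}|x_{0}-x_{\varepsilon}|^{\hat{q}}.
\]
Note that $|x_{0}|<R_{\varepsilon}=R-\rho(\varepsilon)$ forces $x_{\varepsilon}\in B_{R}$, so $u$ is defined at $x_{\varepsilon}$. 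I would then introduce the translated test function
\[
\psi(y) := \varphi(y + x_{0} - x_{\varepsilon}) - \tfrac{1}{\hat{q}\varepsilon^{\hat{q}-1}}|x_{0}-x_{\varepsilon}|^{\hat{q}}
\]
on a neighborhood of $x_{\varepsilon}$ and verify that it touches $u$ from below at $x_{\varepsilon}$. Equality at $x_{\varepsilon}$ is immediate from the choice of $x_{\varepsilon}$, while the strict inequality at nearby points $y\neq x_{\varepsilon}$ follows by combining the strict touching $\varphi(z)<u_{\varepsilon}(z)$ for $z\neq x_{0}$ with the pointwise bound $u_{\varepsilon}(z)\leq u(y) + \tfrac{1}{\hat{q}\varepsilon^{\hat{q}-1}}|z-y|^{\hat{q}}$ from the definition of the inf-convolution, evaluated at $z := y + x_{0}-x_{\varepsilon}$. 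The identity $D\psi(y) = D\varphi(y+x_{0}-x_{\varepsilon})$ shows that $D\psi$ vanishes only at $y=x_\varepsilon$, so $\psi$ is admissible.

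Having identified $\psi$ as an admissible test function, I would apply the supersolution property of $u$ at $x_{\varepsilon}$ to obtain
\[
\limsup_{x_{\varepsilon}\neq y\to x_{\varepsilon}}\bigl(-|D\psi(y)|^{q-2}\Delta_{p}^{N}\psi(y)\bigr) \geq f(|x_{\varepsilon}|).
\]
The change of variable $z = y + x_{0}-x_{\varepsilon}$ rewrites the left-hand side as $\limsup_{x_{0}\neq z\to x_{0}}\bigl(-|D\varphi(z)|^{q-2}\Delta_{p}^{N}\varphi(z)\bigr)$, and the bound $||x_{\varepsilon}|-|x_{0}||\leq|x_{\varepsilon}-x_{0}|\leq\rho(\varepsilon)$ together with $f_{\varepsilon}(r) := \inf_{|s-r|\leq\rho(\varepsilon)} f(s)$ gives $f_{\varepsilon}(|x_{0}|)\leq f(|x_{\varepsilon}|)$, producing the supersolution inequality for $u_{\varepsilon}$ at $x_{0}$.

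The main technical hurdle is the \emph{strict}-touching condition built into Definition~\ref{def:visc sol}: the direct translation of $\varphi$ produces only $\psi\leq u$ in a small neighborhood of $x_{\varepsilon}$. One must combine the strictness of $\varphi<u_{\varepsilon}$ at points $z\neq x_{0}$ with the infimum defining $u_{\varepsilon}$ in the manner above to upgrade to strict local inequality, and then apply a cutoff that preserves the two-jet of $\psi$ at $x_{\varepsilon}$ to extend the strict inequality to all of $B_{R}$. Apart from this, the argument is a straightforward translation.
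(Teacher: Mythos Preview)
Your proposal is correct and matches the paper's proof essentially step for step: the paper also takes the optimal point $x_{\varepsilon}$ from Lemma~\ref{lem:inf conv properties}(iii), defines the same translated test function $\psi(z)=\varphi(z+x_{0}-x_{\varepsilon})-\tfrac{1}{\hat q\varepsilon^{\hat q-1}}|x_{0}-x_{\varepsilon}|^{\hat q}$, checks that it touches $u$ from below at $x_{\varepsilon}$, and then pulls back the supersolution inequality using $|x_{\varepsilon}-x_{0}|\le\rho(\varepsilon)$ and the definition of $f_{\varepsilon}$. The only difference is that you flag the strict-touching/global-extension issue and propose a cutoff, whereas the paper simply asserts the touching and leaves this standard technicality implicit; note, incidentally, that since the paper's notion of ``touches from below'' is already strict for all $x\neq x_{0}$, your chain $\varphi(z)<u_{\varepsilon}(z)\le u(y)+\tfrac{1}{\hat q\varepsilon^{\hat q-1}}|z-y|^{\hat q}$ with $z=y+x_{0}-x_{\varepsilon}$ already gives strict inequality $\psi(y)<u(y)$ for $y\neq x_{\varepsilon}$ wherever $\psi$ is defined, so the cutoff is only needed for domain extension, not for strictness.
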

\begin{proof}
Suppose that $\varphi\in C^{2}$ touches $u_{\varepsilon}$ from below
at $x\in B_{R_{\varepsilon}}$ and that $D\varphi(y)\not=0$ when
$y\not=x$. Let $x_{\varepsilon}$ be as in (iii) of Lemma \ref{lem:inf conv properties}.
Then 
\begin{align}
\varphi(x)= & u_{\varepsilon}(x)=u(x_{\varepsilon})+\frac{1}{\hat{q}\varepsilon^{\hat{q}-1}}\left|x-x_{\varepsilon}\right|^{\hat{q}},\label{eq:infconv is visc super 1}\\
\varphi(y)\leq & u_{\varepsilon}(y)\leq u(z)+\frac{1}{\hat{q}\varepsilon^{\hat{q}-1}}\left|y-z\right|^{\hat{q}}\quad\text{for all }y,z\in B_{R}.\label{eq:infconv is visc super 2}
\end{align}
Set 
\[
\psi(z)=\varphi(z+x-x_{\varepsilon})-\frac{\left|x-x_{\varepsilon}\right|^{\hat{q}}}{\hat{q}\varepsilon^{\hat{q}-1}}.
\]
It follows from (\ref{eq:infconv is visc super 1}) and (\ref{eq:infconv is visc super 2})
that $\psi$ touches $u$ from below at $x_{\varepsilon}$. Therefore,
since $u$ is a viscosity supersolution to (\ref{eq:normplap f}),
we have
\begin{align*}
0\leq & \limsup_{x\not=z\rightarrow x_{\varepsilon}}\big(-\left|D\psi(z)\right|^{q-2}\Delta_{p}^{N}\psi(z)\big)-f(\sa x_{\varepsilon}\sa)\\
\leq & \limsup_{x\not=y\rightarrow x}\big(-\left|D\varphi(y)\right|^{q-2}\Delta_{p}^{N}\varphi(y)\big)-f_{\varepsilon}(\left|x\right|),
\end{align*}
where we used that $\left|x-x_{\varepsilon}\right|\leq\rho(\varepsilon)$
and $f_{\varepsilon}(r)=\inf_{\left|r-s\right|\leq\rho(\varepsilon)}f(s)$.
Consequently $u_{\varepsilon}$ is a viscosity supersolution to (\ref{eq:lala}).
\end{proof}
Next we combine the previous lemma with the radial transformation
of (\ref{eq:lala}).
\begin{lem}
\label{lem:inf conv pointwise}Assume that $u$ is a bounded radial
viscosity supersolution to (\ref{eq:normplap f}) in $B_{R}$. Set
$v_{\varepsilon}(r):=u_{\varepsilon}(re_{1})$ and assume that $v_{\varepsilon}$
is twice differentiable at $r\in(0,R_{\varepsilon})$. Then, if $q>2$
or $v_{\varepsilon}^{\prime}(r)\not=0$, we have
\begin{equation}
-\kappa\left|v_{\varepsilon}^{\prime}(r)\right|^{q-2}\big((q-1)v_{\varepsilon}^{\prime\prime}(r)+\frac{d-1}{r}v_{\varepsilon}^{\prime}(r)\big)-f_{\varepsilon}(r)\geq0.\label{eq:inf conv pointwise}
\end{equation}
Moreover, if $1<q\leq2$ with $v_{\varepsilon}^{\prime}(r)=0$, then
we have $f_{\varepsilon}(r)\leq0.$
\end{lem}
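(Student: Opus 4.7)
The plan is to transfer the viscosity supersolution property of $u_\varepsilon$ (provided by Lemma \ref{lem:infconv is visc super}) to a pointwise radial inequality, using that semi-concavity gives an honest two-sided second-order Taylor expansion at points of Alexandrov twice differentiability. Since $v_\varepsilon$ is twice differentiable at $r>0$ and $x\mapsto|x|$ is smooth near $re_1$, the composition $u_\varepsilon(x)=v_\varepsilon(|x|)$ is twice differentiable at $re_1$ with
\[
Du_\varepsilon(re_1)=v_\varepsilon'(r)e_1,\qquad D^2u_\varepsilon(re_1)=v_\varepsilon''(r)e_1\otimes e_1+\frac{v_\varepsilon'(r)}{r}(I-e_1\otimes e_1),
\]
exactly as in the formal calculation of the introduction.

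I would handle the main case $v_\varepsilon'(r)\neq0$ first. Using the Taylor expansion together with semi-concavity, I would construct, for every $\eta>0$, a $C^2$ function $\varphi_\eta$ that touches $u_\varepsilon$ strictly from below at $re_1$ with $D\varphi_\eta(re_1)=v_\varepsilon'(r)e_1\neq0$ and $D^2\varphi_\eta(re_1)=D^2u_\varepsilon(re_1)-\eta I$: concretely, take the quadratic Taylor polynomial with Hessian perturbed by $-\eta I$, cut off with a smooth bump, and glue to a sufficiently negative constant outside a small ball (using that $u_\varepsilon$ is bounded in $B_R$). Since $v_\varepsilon'(r)\neq0$ the function $u_\varepsilon$ is not constant near $re_1$, so the equivalent Birindelli--Demengel formulation (Definition \ref{def:visc sol 2}(i)) applied to $u_\varepsilon$ with source $f_\varepsilon$ gives
\[
-|v_\varepsilon'(r)|^{q-2}\Delta_p^N\varphi_\eta(re_1)\geq f_\varepsilon(r).
\]
Because $D\varphi_\eta(re_1)$ is parallel to $e_1$, the operator collapses to $(p-1)v_\varepsilon''(r)+(N-1)r^{-1}v_\varepsilon'(r)-(N+p-2)\eta$; letting $\eta\downarrow0$ and invoking $p-1=\kappa(q-1)$ and $N-1=\kappa(d-1)$ yields exactly \eqref{eq:inf conv pointwise}.

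For the complementary case $v_\varepsilon'(r)=0$---which covers both the $q>2$ subcase of the first assertion (where \eqref{eq:inf conv pointwise} reduces to $f_\varepsilon(r)\leq0$) and the ``Moreover'' part---I would argue directly with $u$ rather than $u_\varepsilon$. By the radial identity \eqref{eq:inf conv radial diff-1}, $v_\varepsilon'(r)=0$ forces the inf-convolution minimizer to satisfy $r_\varepsilon=r$, so $u_\varepsilon(re_1)=u(re_1)$ and the defining inequality of $u_\varepsilon$ yields $u(y)\geq u(re_1)-|y-re_1|^{\hat q}/(\hat q\varepsilon^{\hat q-1})$ for every $y\in B_R$, strictly for $y\neq re_1$ by uniqueness of the minimizer at a point of differentiability. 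Hence $\psi(y):=u(re_1)-|y-re_1|^{\hat q}/(\hat q\varepsilon^{\hat q-1})$ is a genuine $C^2$ function that touches $u$ from below at $re_1$, with $D\psi$ vanishing only at $re_1$. Definition \ref{def:visc sol} applied to $u$ then gives $\limsup_{y\to re_1}(-|D\psi(y)|^{q-2}\Delta_p^N\psi(y))\geq f(r)$; a direct computation using the radial symmetry of $\psi$ around $re_1$ shows the left-hand side equals a positive multiple of $|y-re_1|^{(\hat q-1)(q-2)+\hat q-2}/\varepsilon^{(\hat q-1)(q-1)}$, whose exponent of $|y-re_1|$ is strictly positive by the standing choice $q-2+(\hat q-2)/(\hat q-1)>0$. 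Thus the $\limsup$ vanishes, so $f(r)\leq0$, and $f_\varepsilon(r)\leq f(r)\leq0$ follows from the definition of $f_\varepsilon$.

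The step that I expect to require the most care is the construction of the $C^2$ minorant $\varphi_\eta$ in the first case: one has only Alexandrov twice differentiability of $u_\varepsilon$ at $re_1$ and must upgrade this, via the semi-concavity from Lemma \ref{lem:inf conv properties}(iii), to a globally defined $C^2$ function that touches strictly from below with prescribed $1$-jet and a controlled perturbation of the Hessian, all while keeping $D\varphi_\eta(re_1)\neq0$. Everything downstream---the algebraic simplification of $\Delta_p^N$ on a radial test function, the limit $\eta\downarrow0$, and the $\limsup$ calculation for $\psi$---is then routine.
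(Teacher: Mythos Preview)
Your proposal is correct and follows essentially the same strategy as the paper: in the nondegenerate case use that twice differentiability of $u_\varepsilon$ at $re_1$ lets you test the viscosity inequality with (a perturbation of) the second-order Taylor polynomial, then reduce the operator via the radial formulas for $Du_\varepsilon(re_1)$ and $D^2u_\varepsilon(re_1)$; in the degenerate case use the very same touching function $\psi(y)=u(re_1)-|y-re_1|^{\hat q}/(\hat q\varepsilon^{\hat q-1})$ coming from $x_\varepsilon=x$, and compute that $|D\psi|^{q-2}\Delta_p^N\psi\to0$ thanks to the exponent choice for $\hat q$.

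The only differences are organizational. The paper treats ``$q>2$ or $v_\varepsilon'(r)\neq0$'' as a single case and simply asserts that twice differentiability plus the viscosity property yields \eqref{eq:inf conv pointwise 1} pointwise, without spelling out the $C^2$ minorant; your explicit construction of $\varphi_\eta$ and passage $\eta\downarrow0$ is the standard way to make that one-line step rigorous, and using Definition~\ref{def:visc sol 2}(i) (which only demands $D\varphi(x_0)\neq0$) is a clean way to avoid worrying about $D\varphi_\eta$ elsewhere. Conversely, you route the subcase $q>2$, $v_\varepsilon'(r)=0$ through the $\psi$-argument together with $f_\varepsilon(r)\le f(r)$, whereas the paper absorbs it into the first case by reading \eqref{eq:inf conv pointwise 1} as $0-f_\varepsilon(r)\ge0$. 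Both routes are valid; yours sidesteps the mild awkwardness of interpreting $|Du_\varepsilon|^{q-2}\Delta_\infty^N u_\varepsilon$ when the gradient vanishes.
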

\begin{proof}
Consider first the case $q>2$ or $v_{\varepsilon}^{\prime}(r)\not=0$.
Since $u_{\varepsilon}$ is twice differentiable at $re_{1}$, it
follows from the definition of viscosity supersolutions that
\begin{equation}
-\left|Du_{\varepsilon}(re_{1})\right|^{q-2}(\Delta u_{\varepsilon}(re_{1})+(p-2)\Delta_{\infty}^{N}u_{\varepsilon}(re_{1}))-f_{\varepsilon}(r)\geq0,\label{eq:inf conv pointwise 1}
\end{equation}
where 
\begin{equation}
\Delta_{\infty}^{N}u_{\varepsilon}=\left|Du_{\varepsilon}\right|^{-2}\sum_{i,j=1}^{N}D_{ij}u_{\varepsilon}D_{i}u_{\varepsilon}D_{j}u_{\varepsilon}.\label{eq:norm inf lap}
\end{equation}
 Moreover, we have
\[
Du_{\varepsilon}(re_{1})=e_{1}v_{\varepsilon}^{\prime}(r)\quad\text{and\quad}D^{2}u_{\varepsilon}(re_{1})=e_{1}\otimes e_{1}v_{\varepsilon}^{\prime\prime}(r)+\frac{1}{r}\big(I-e_{1}\otimes e_{1}\big)v_{\varepsilon}^{\prime}(r).
\]
It is now straightforward to compute that
\[
\Delta u_{\varepsilon}(re_{1})=\tr D^{2}u_{\varepsilon}(re_{1})=v_{\varepsilon}^{\prime\prime}(r)+\frac{N-1}{r}v_{\varepsilon}^{\prime}(r)
\]
and using (\ref{eq:norm inf lap}) 
\[
\Delta_{\infty}^{N}u_{\varepsilon}(re_{1})=v_{\varepsilon}^{\prime\prime}(r).
\]
Combining these with (\ref{eq:inf conv pointwise 1}) and recalling
that $d-1=(N-1)(q-1)/(p-1)$, $\kappa=(p-1)/(q-1)$, we obtain (\ref{eq:inf conv pointwise}).

Consider then the case $1<q\leq2$ and $v_{\varepsilon}^{\prime}(r)=0$.
Denote $x:=re_{1}$. Then $Du(x)=0$ and so by (\ref{eq:inf conv diff})
we have $x_{\varepsilon}=x$. Therefore by the definition of inf-convolution
\[
u(y)+\frac{\left|x-y\right|^{\hat{q}}}{\hat{q}\varepsilon^{\hat{q}-1}}\geq u_{\varepsilon}(x)=u(x)\quad\text{for all }y\in B_{R}.
\]
Rearranging the terms, we find that
\[
\phi(y):=u(x)-\frac{\left|x-y\right|^{\hat{q}}}{\hat{q}\varepsilon^{\hat{q}-1}}\leq u(y)\quad\text{for all }y\in B_{R}.
\]
In other words, the function $\phi$ touches $u$ from below at $x$.
Since $u$ is a viscosity supersolution and $D\phi(y)\not=0$ when
$y\not=x$, it follows that
\[
\limsup_{y\rightarrow x,y\not=x}\left(\left|D\phi(y)\right|^{q-2}\Delta_{p}^{N}\phi(y)-f(\left|y\right|)\right)\geq0.
\]
This implies that $-f(\left|x\right|)=-f(r)\geq0$ since $\left|D\phi(y)\right|^{q-2}\Delta_{p}^{N}\phi(y)\rightarrow0$
as $y\rightarrow x$. Indeed, we have
\begin{align*}
\left|D\phi(y)\right|^{q-2}\left|\Delta_{p}^{N}\phi(y)\right|\leq & C(q,\hat{q},\varepsilon)\left|y-x\right|^{(q-2)(\hat{q}-1)}(N+\left|p-2\right|)||D^{2}\phi(y)||\\
\leq & C(q,\hat{q},p,N,\varepsilon)\left|y-x\right|^{(q-2)(\hat{q}-1)+\hat{q}-2},
\end{align*}
where $(q-2)(\hat{q}-1)+\hat{q}-2>0$ by definition of $\hat{q}$.
\end{proof}
Next we show that the inf-convolution is a weak supersolution to $-\kappa\Delta_{q}^{d}u_{\varepsilon}=f_{\varepsilon}$
in $(0,R_{\varepsilon})$. We consider the case $q>2$ first.
\begin{lem}
\label{lem:inf conv is weak q > 2}Let $q>2$. Assume that $u$ is
a bounded radial viscosity supersolution to (\ref{eq:normplap f})
in $B_{R}$. Then the function $v_{\varepsilon}(r):=u_{\varepsilon}(re_{1})$
is a weak supersolution to $-\kappa\Delta_{q}^{d}u=f_{\varepsilon}$
in $(0,R_{\varepsilon})$.
\end{lem}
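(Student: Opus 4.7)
The plan is to combine the pointwise a.e.\ inequality provided by Lemma \ref{lem:inf conv pointwise} with an integration by parts in the $BV$ sense, exploiting the semi-concavity of $v_{\varepsilon}$. First I would verify the required Sobolev regularity: $u_{\varepsilon}$ is bounded and, by \eqref{eq:inf conv radial diff-1}, $v_{\varepsilon}'$ is a.e.\ bounded by $(\rho(\varepsilon)/\varepsilon)^{\hat{q}-1}$, so $v_{\varepsilon}\in W^{1,q}(r^{d-1},(0,R'))$ for every $R'<R_{\varepsilon}$.

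By Alexandrov's theorem applied to the semi-concave function $v_{\varepsilon}$, the second derivative $v_{\varepsilon}''$ exists a.e.\ in $(0,R_{\varepsilon})$. Since $q>2$, Lemma \ref{lem:inf conv pointwise} applies at every such point and, after an a.e.\ product-rule computation, gives
\[
-\kappa\bigl(|v_{\varepsilon}'(r)|^{q-2}v_{\varepsilon}'(r)r^{d-1}\bigr)'\;\geq\;f_{\varepsilon}(r)r^{d-1}\quad\text{a.e.\ in }(0,R_{\varepsilon}).
\]
This is the pointwise ingredient, which must now be upgraded to the distributional formulation.

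Set $g(r):=|v_{\varepsilon}'(r)|^{q-2}v_{\varepsilon}'(r)r^{d-1}$. Semi-concavity of $v_{\varepsilon}$, coming from \eqref{eq:inf conv radial hess-1}, means that $Dv_{\varepsilon}'$ is a signed Radon measure whose singular part (with respect to Lebesgue measure) is non-positive. The $BV$ chain rule applied to the $C^{1}$ non-decreasing map $t\mapsto|t|^{q-2}t$, where the hypothesis $q>2$ is essential, shows that $g\in BV_{\mathrm{loc}}(0,R_{\varepsilon})$ and that the singular part of $Dg$ is likewise non-positive. For a non-negative $\varphi\in C_{0}^{\infty}(0,R_{\varepsilon})$, $BV$ integration by parts combined with this sign information yields
\[
\int_{0}^{R_{\varepsilon}}\kappa|v_{\varepsilon}'|^{q-2}v_{\varepsilon}'\varphi'r^{d-1}\,dr\;=\;-\kappa\int\varphi\,dDg\;\geq\;-\kappa\int_{0}^{R_{\varepsilon}}\varphi\,g'(r)\,dr\;\geq\;\int_{0}^{R_{\varepsilon}}\varphi f_{\varepsilon}r^{d-1}\,dr,
\]
where $g'$ is the a.e.\ derivative; the first inequality drops the non-positive singular part (using $\varphi\geq0$), and the second uses the pointwise inequality.

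Finally, since the test functions in Definition \ref{def:weak sol} lie in $C_{0}^{\infty}(-R_{\varepsilon},R_{\varepsilon})$ and may be nonzero at the origin, I would approximate by cutoffs $\xi_{\eta}$ equal to $1$ on $[2\eta,R_{\varepsilon}]$ and vanishing on $[0,\eta]$ with $|\xi_{\eta}'|\leq C/\eta$. Applying the preceding inequality to $\xi_{\eta}\varphi$ produces an error term whose absolute value is controlled by $C\eta^{-1}\int_{\eta}^{2\eta}r^{d-1}\,dr\leq C'\eta^{d-1}$, which vanishes as $\eta\to0$ because $d>1$; the remaining terms converge by dominated convergence, yielding the weak supersolution inequality. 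The main obstacle is precisely the $BV$ integration by parts step: $v_{\varepsilon}'$ is only of bounded variation, not of Sobolev $W^{1,1}_{\mathrm{loc}}$ regularity, so the pointwise a.e.\ inequality is by itself not sufficient; semi-concavity of $v_{\varepsilon}$ is exactly the ingredient that gives the singular part of $Dg$ the correct sign so that it may be discarded against a non-negative test function.
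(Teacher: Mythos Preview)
Your proof is correct and takes a genuinely different route from the paper's. The paper proceeds by mollification: it writes $u_{\varepsilon}=\phi+C|x|^{2}$ with $\phi$ concave, mollifies $\phi$ to obtain smooth semi-concave approximations $v_{\varepsilon,j}$, performs the integration by parts at the smooth level, and then passes to the limit using Fatou's lemma (with a lower bound coming from $v_{\varepsilon,j}''\leq C$) on the left and dominated convergence on the right. Lemma~\ref{lem:inf conv pointwise} is invoked only at the very end, after the limit $j\to\infty$, to conclude non-negativity. Your argument instead stays at the level of $v_{\varepsilon}$ itself and replaces the mollification step by a direct $BV$ integration by parts, using the Vol'pert chain rule for the $C^{1}$ monotone map $t\mapsto|t|^{q-2}t$ together with the fact that the singular part of $Dv_{\varepsilon}'$ is non-positive; this is precisely equivalent to what the mollification and Fatou step accomplish in the paper. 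Your treatment of the origin by cutoffs $\xi_{\eta}$ corresponds to the paper's limit $h\to0$ in the boundary term of the integration by parts. Two minor remarks: semi-concavity of $v_{\varepsilon}$ (needed for $v_{\varepsilon}'\in BV_{\mathrm{loc}}$ with non-positive singular second derivative) is the content of Lemma~\ref{lem:inf conv properties}(iii) rather than of \eqref{eq:inf conv radial hess-1}, which is a pointwise refinement; and you should note that $v_{\varepsilon}'$ is locally bounded so that the chain rule applies with $\Phi$ Lipschitz on the relevant range. The paper's approach is more elementary in that it avoids the $BV$ machinery, while yours is more direct and makes transparent exactly which structural feature (the sign of the singular measure) replaces smoothness.
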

\begin{proof}
Since $u_{\varepsilon}$ is semi-concave in $B_{R_{\varepsilon}}$,
it is also locally Lipschitz continuous there \cite[p267]{measuretheoryevans}.
Consequently we have $v_{\varepsilon}\in W^{1,q}(r^{d-1},(0,R^{\prime}))$
for all $R^{\prime}\in(0,R_{\varepsilon})$ by Lemma \ref{lem:sobolev is in H1q}.
Observe then that since $\phi(x):=u_{\varepsilon}(x)-C(\hat{q},\varepsilon,u)\left|x\right|^{2}$
is concave in $B_{R_{\varepsilon}}$, it is twice differentiable almost
everywhere by Alexandrov's theorem. Moreover, the proof of Alexandrov's
theorem in \cite[p273]{measuretheoryevans} establishes that we can
approximate $\phi$ by smooth concave radial functions $\phi_{j}$
with the standard mollification. Therefore, by setting $u_{\varepsilon,j}(x):=\phi_{j}(x)+C(\hat{q},\varepsilon,u)\left|x\right|^{2}$,
the following pointwise limits hold almost everywhere in $B_{R_{\varepsilon}}$
\[
u_{\varepsilon,j}\rightarrow u_{\varepsilon},\quad Du_{\varepsilon,j}\rightarrow Du_{\varepsilon}\quad\text{and\quad}D^{2}u_{\varepsilon,j}\rightarrow D^{2}u_{\varepsilon}.
\]
Thus, since $u_{\varepsilon}$ is radial, setting $v_{\varepsilon,j}(r):=u_{\varepsilon,j}(re_{1})$
we have 
\[
v_{\varepsilon,j}\rightarrow v_{\varepsilon},\quad v_{\varepsilon,j}^{\prime}\rightarrow v_{\varepsilon}^{\prime}\quad\text{and\quad}v_{\varepsilon,j}^{\prime\prime}\rightarrow v_{\varepsilon}^{\prime\prime}
\]
almost everywhere in $(0,R_{\varepsilon})$. \begin{note2} (since
$v_{\varepsilon}^{\prime}(r)=e_{1}Du_{\varepsilon}(re_{1})$ and $v_{\varepsilon}^{\prime\prime}(r)=e_{1}^{\prime}D^{2}u_{\varepsilon}(re_{1})e_{1}$)
\end{note2} Since $v_{\varepsilon,j}$ is smooth and $q>2$, a direct
calculation yields for $r\in(0,R_{\varepsilon})$
\begin{align}
-\kappa\sa v_{\varepsilon,j}^{\prime}\sa^{q-2}\big((q-1)v_{\varepsilon,j}^{\prime\prime}+\frac{d-1}{r}v_{\varepsilon,j}^{\prime}\big)r^{d-1}= & -\kappa(\sa v_{\varepsilon,j}^{\prime}\sa^{q-2}v_{\varepsilon,j}^{\prime}r^{d-1})^{\prime}.\label{eq:inf conv is weak q>2 1}
\end{align}
Fix a non-negative $\varphi\in C_{0}^{\infty}(-R_{\varepsilon},R_{\varepsilon})$.
Then, integrating by parts we find for $h>0$
\begin{align*}
 & \int_{h}^{R}-\varphi\kappa(\sa v_{\varepsilon,j}^{\prime}\sa^{q-2}v_{\varepsilon,j}^{\prime}r^{d-1})^{\prime}\d r\\
 & =\int_{h}^{R}\kappa\sa v_{\varepsilon,j}^{\prime}\sa^{q-2}v_{\varepsilon,j}^{\prime}r^{d-1}\varphi^{\prime}\d r+\varphi(h)\kappa\sa v_{\varepsilon,j}^{\prime}(h)\sa^{q-2}v_{\varepsilon,j}^{\prime}(h)h^{d-1}.
\end{align*}
Combining this with (\ref{eq:inf conv is weak q>2 1}), letting $h\rightarrow0$
and subtracting $\int_{0}^{R}\varphi f_{\varepsilon}r^{d-1}\d r$
from both sides, we obtain
\begin{align*}
\int_{0}^{R} & -\kappa\varphi\sa v_{\varepsilon,j}^{\prime}\sa^{q-2}\big((q-1)v_{\varepsilon,j}^{\prime\prime}+\frac{d-1}{r}v_{\varepsilon,j}^{\prime}\big)r^{d-1}-\varphi f_{\varepsilon}r^{d-1}\d r\\
 & =\int_{0}^{R}\kappa\sa v_{\varepsilon,j}^{\prime}\sa^{q-2}v_{\varepsilon,j}^{\prime}\varphi^{\prime}r^{d-1}-\varphi f_{\varepsilon}r^{d-1}\d r.
\end{align*}
Since $v_{\varepsilon,j}$ is Lipschitz continuous, we have $\smash{M:=\sup_{j}\sa\sa v_{\varepsilon,j}^{\prime}\sa\sa_{L^{\infty}(\supp\varphi)}<\infty}$.
Thus we may let $j\rightarrow\infty$ in the above inequality and
apply the dominated convergence theorem at the right hand side to
obtain
\begin{align}
 & \liminf_{j\rightarrow\infty}\int_{0}^{R}-\kappa\varphi\sa v_{\varepsilon,j}^{\prime}\sa^{q-2}\big((q-1)v_{\varepsilon,j}^{\prime\prime}+\frac{d-1}{r}v_{\varepsilon,j}^{\prime}\big)r^{d-1}-\varphi f_{\varepsilon}r^{d-1}\d r\nonumber \\
 & \ \leq\int_{0}^{R}\kappa\sa v_{\varepsilon}^{\prime}\sa^{q-2}v_{\varepsilon}^{\prime}\varphi^{\prime}r^{d-1}-\varphi f_{\varepsilon}r^{d-1}\d r.\label{eq:inf conv is weak q>2 2}
\end{align}
It now suffices to show that the left-hand side is non-negative to
finish the proof. Observe that $\smash{v_{\varepsilon,j}^{\prime\prime}\leq C(\hat{q},\varepsilon,u)}$
since $\phi_{j}$ is concave. Thus 
\begin{align*}
-\sa v_{\varepsilon,j}^{\prime}\sa^{q-2}\big((q-1)v_{\varepsilon,j}^{\prime\prime}+\frac{d-1}{r}v_{\varepsilon,j}^{\prime})\geq & -M^{q-2}\big(C(q,\hat{q},\varepsilon,u)+\frac{d-1}{r}M\big).
\end{align*}
Since $d-2>-1$, it follows from the above inequality that the integral
at the left hand side of (\ref{eq:inf conv is weak q>2 2}) has an
integrable lower bound. Hence by Fatou's lemma
\begin{align*}
 & \liminf_{j\rightarrow\infty}\int_{0}^{R}-\kappa\varphi\sa v_{\varepsilon,j}^{\prime}\sa^{q-2}\big((q-1)v_{\varepsilon,j}^{\prime\prime}+\frac{d-1}{r}v_{\varepsilon,j}^{\prime}\big)r^{d-1}-\varphi f_{\varepsilon}r^{d-1}\d r\\
 & \ \geq\int_{0}^{R}-\kappa\varphi\sa v_{\varepsilon}^{\prime}\sa^{q-2}\big((q-1)v_{\varepsilon}^{\prime\prime}+\frac{d-1}{r}v_{\varepsilon}^{\prime}\big)r^{d-1}-\varphi f_{\varepsilon}r^{d-1}\d r\geq0,
\end{align*}
where the last inequality follows from Lemma \ref{lem:inf conv pointwise}.
\end{proof}
Next we consider the case $1<q\leq2$. We need an additional regularization
step because of the singularity of $\left|Du\right|^{q-2}\Delta_{p}^{N}u$
at the points where the gradient vanishes.
\begin{lem}
\label{lem:inf-conv is weak}Let $1<q\leq2$. Assume that $u$ is
a bounded radial viscosity supersolution to (\ref{eq:normplap f})
in $B_{R}$. Then the function $v_{\varepsilon}(r):$$=u_{\varepsilon}(re_{1})$
is a weak supersolution to $-\kappa\Delta_{q}^{d}u\geq f_{\varepsilon}$
in $(0,R_{\varepsilon})$.
\end{lem}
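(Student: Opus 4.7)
The plan is to follow the template of Lemma \ref{lem:inf conv is weak q > 2} but with an additional regularization to handle the singularity of $s\mapsto|s|^{q-2}s$ at $s=0$ when $1<q\leq 2$. Introduce
\[
\Phi_\sigma(s):=(\sigma+s^2)^{(q-2)/2}s,\qquad \sigma>0,
\]
which is smooth, satisfies $|\Phi_\sigma(s)|\leq|s|^{q-1}$, converges to $|s|^{q-2}s$ pointwise as $\sigma\to 0^+$, and has
\[
\Phi_\sigma'(s)=(\sigma+s^2)^{(q-4)/2}\bigl[(q-1)s^2+\sigma\bigr],
\]
with $0\leq\Phi_\sigma'(s)\leq\sigma^{(q-2)/2}$ on $\mathbb{R}$ (the upper bound because $\Phi_\sigma'$ is decreasing in $s^2\geq 0$ when $q\leq 2$). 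Construct the smooth radial mollifications $v_{\varepsilon,j}$ exactly as in the previous lemma, so that $v_{\varepsilon,j},v_{\varepsilon,j}',v_{\varepsilon,j}''\to v_\varepsilon,v_\varepsilon',v_\varepsilon''$ almost everywhere, with $v_{\varepsilon,j},v_{\varepsilon,j}'$ uniformly bounded and $v_{\varepsilon,j}''\leq C$ uniformly.

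Since each $v_{\varepsilon,j}$ is smooth, the chain rule gives $(\Phi_\sigma(v_{\varepsilon,j}')r^{d-1})'=\Phi_\sigma'(v_{\varepsilon,j}')v_{\varepsilon,j}''r^{d-1}+(d-1)\Phi_\sigma(v_{\varepsilon,j}')r^{d-2}$. Testing against a nonnegative $\varphi\in C_0^\infty(-R_\varepsilon,R_\varepsilon)$ and integrating by parts on $(h,R_\varepsilon)$ with $h\to 0^+$ (the boundary term vanishes because $|\Phi_\sigma(v_{\varepsilon,j}')|$ is bounded and $d>1$), one obtains an identity of exactly the form derived in Lemma \ref{lem:inf conv is weak q > 2} but with $|\cdot|^{q-2}(\cdot)$ replaced by $\Phi_\sigma$. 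Fixing $\sigma>0$ and letting $j\to\infty$, the left-hand side converges by dominated convergence, while for the right-hand side the bounds $\Phi_\sigma'\leq\sigma^{(q-2)/2}$ and $v_{\varepsilon,j}''\leq C$ provide a $j$-uniform integrable lower bound on the integrand, so Fatou's lemma delivers the analogous inequality with $v_\varepsilon$ in place of $v_{\varepsilon,j}$.

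The hard part will be the passage $\sigma\to 0^+$. The left-hand side converges by dominated convergence to $\int\kappa|v_\varepsilon'|^{q-2}v_\varepsilon'\varphi'r^{d-1}-\varphi f_\varepsilon r^{d-1}\,dr$. For the right-hand side I would split $(0,R_\varepsilon)$ along the zero set of $v_\varepsilon'$: on $\{v_\varepsilon'\neq 0\}$ one has $\Phi_\sigma'\to(q-1)|v_\varepsilon'|^{q-2}$ and $\Phi_\sigma\to|v_\varepsilon'|^{q-2}v_\varepsilon'$ pointwise, so the limiting integrand is nonnegative by Lemma \ref{lem:inf conv pointwise}. On $\{v_\varepsilon'=0\}$, formula \eqref{eq:inf conv diff} forces $x_\varepsilon=re_1$, and property (iv) of Lemma \ref{lem:inf conv properties} then gives $v_\varepsilon''(r)\leq 0$; combined with $f_\varepsilon(r)\leq 0$ from Lemma \ref{lem:inf conv pointwise}, the integrand is nonnegative on this set for every $\sigma>0$. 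The delicate step is producing a uniform-in-$\sigma$ integrable lower bound for the integrand in order to apply Fatou: the singular factor $\Phi_\sigma'(0)=\sigma^{(q-2)/2}$ must be cancelled, and this is possible precisely because $(v_\varepsilon'')^+$ vanishes on $\{v_\varepsilon'=0\}$, while on $\{v_\varepsilon'\neq 0\}$ the clean bound $\Phi_\sigma'(s)\leq q|s|^{q-2}$ already holds once $\sigma\leq s^2$. This structural sign property---a specific feature of the inf-convolution without which the regularization would yield only a vacuous bound---is what ultimately closes the argument and identifies $v_\varepsilon$ as a weak supersolution.
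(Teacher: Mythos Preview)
Your approach is essentially the paper's: the same regularization $\Phi_\sigma(s)=(\sigma+s^2)^{(q-2)/2}s$ (the paper writes its derivative out as $G_\delta$), the same mollification $v_{\varepsilon,j}$, and the same two-step limit $j\to\infty$ then $\sigma\to 0$, with Fatou on the strong side and dominated convergence on the weak side.

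There is, however, a genuine gap in your final Fatou step. On $\{v_\varepsilon'\neq 0\}$ you invoke $\Phi_\sigma'(s)\leq q|s|^{q-2}$ (in fact this holds for all $\sigma>0$, not just $\sigma\leq s^2$), but this by itself gives only
\[
-\kappa\,\Phi_\sigma'(v_\varepsilon')\,v_\varepsilon''\ \geq\ -\kappa q\,|v_\varepsilon'|^{q-2}(v_\varepsilon'')^+,
\]
and with merely the semiconcavity bound $(v_\varepsilon'')^+\leq C$ the right-hand side is $-Cq|v_\varepsilon'|^{q-2}$, which need not be integrable when $q<2$: it blows up wherever $v_\varepsilon'$ approaches zero, and there is no a priori control on how often or how slowly that can happen. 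Knowing only that $(v_\varepsilon'')^+$ vanishes \emph{on} the set $\{v_\varepsilon'=0\}$ does not control this behaviour \emph{near} that set.

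What actually closes the argument---and what the paper uses---is the \emph{quantitative} inf-convolution estimate \eqref{eq:inf conv radial hess-1},
\[
v_\varepsilon''\ \leq\ \frac{\hat q-1}{\varepsilon}\,|v_\varepsilon'|^{(\hat q-2)/(\hat q-1)}\quad\text{a.e.},
\]
which you cite only at $v_\varepsilon'=0$ via property (iv). Combined with your bound on $\Phi_\sigma'$ it yields
\[
\Phi_\sigma'(v_\varepsilon')\,(v_\varepsilon'')^+\ \leq\ C\,|v_\varepsilon'|^{\,q-2+\frac{\hat q-2}{\hat q-1}},
\]
and the exponent is nonnegative precisely by the choice of $\hat q$. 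This produces a \emph{bounded} (not merely integrable) lower bound, uniform in $\sigma$, and is the missing ingredient in your sketch. (A minor point: your labeling of ``left'' and ``right'' in the $j\to\infty$ paragraph is swapped relative to the convention of Lemma~\ref{lem:inf conv is weak q > 2}; the side carrying $v_{\varepsilon,j}''$ is the one that needs Fatou.)
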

\begin{proof}
\textbf{(Step 1)} We define the smooth semi-concave functions $v_{\varepsilon,j}$
exactly as in the proof of Lemma \ref{lem:inf conv is weak q > 2}.
Then again
\[
v_{\varepsilon,j}\rightarrow v_{\varepsilon},\quad v_{\varepsilon,j}^{\prime}\rightarrow v_{\varepsilon}^{\prime}\quad\text{and\quad}v_{\varepsilon,j}^{\prime\prime}\rightarrow v_{\varepsilon}^{\prime\prime}
\]
almost everywhere in $(0,R_{\varepsilon})$. Let $\delta>0$. We regularize
the radial transformation of equation (\ref{eq:normplap f}) by considering
the following term
\[
G_{\delta}(v):=-\kappa(\left|v^{\prime}\right|^{2}+\delta)^{\frac{q-2}{2}}\Big(\big(1+(q-2)\frac{\left|v^{\prime}\right|^{2}}{\left|v^{\prime}\right|^{2}+\delta}\big)v^{\prime\prime}+\frac{d-1}{r}v^{\prime}\Big).
\]
Since $v_{\varepsilon,j}$ is smooth, a direct calculation yields
for $r\in(0,R_{\varepsilon})$
\begin{equation}
G_{\delta}(v_{\varepsilon,j})r^{d-1}=-\kappa\big((\sa v_{\varepsilon,j}^{\prime}\sa^{2}+\delta)^{\frac{q-2}{2}}v_{\varepsilon,j}^{\prime}r^{d-1}\big)^{\prime}.\label{eq:direct calc}
\end{equation}
Fix a non-negative $\varphi\in C_{0}^{\infty}(-R_{\varepsilon},R_{\varepsilon})$.
Then, integrating by parts we have for $h>0$
\begin{align*}
 & \int_{h}^{R}-\kappa\varphi\big((\sa v_{\varepsilon,j}^{\prime}\sa^{2}+\delta)^{\frac{q-2}{2}}v_{\varepsilon,j}^{\prime}r^{d-1}\big)^{\prime}\d r\\
 & =\int_{h}^{R}\kappa(\sa v_{\varepsilon,j}^{\prime}\sa^{2}+\delta)^{\frac{q-2}{2}}v_{\varepsilon,j}^{\prime}r^{d-1}\varphi^{\prime}\d r+\varphi(h)\kappa(\sa v_{\varepsilon,j}^{\prime}(h)\sa^{2}+\delta)^{\frac{q-2}{2}}v_{\varepsilon,j}^{\prime}(h)h^{d-1}.
\end{align*}
Combining this with (\ref{eq:direct calc}), letting $h\rightarrow0$
and subtracting $\int_{0}^{R}\varphi f_{\varepsilon}r^{d-1}\d r$
from both sides we obtain
\[
\int_{0}^{R}\varphi G_{\delta}(v_{\varepsilon,j})r^{d-1}-\varphi f_{\varepsilon}r^{d-1}\d r=\int_{0}^{R}\kappa(\sa v_{\varepsilon,j}^{\prime}\sa^{2}+\delta)^{\frac{q-2}{2}}v_{\varepsilon,j}^{\prime}\varphi^{\prime}r^{d-1}-\varphi f_{\varepsilon}r^{d-1}\d r.
\]
This implies that
\begin{align}
 & \liminf_{j\rightarrow\infty}\int_{0}^{R}\varphi G_{\delta}(v_{\varepsilon,j})r^{d-1}-\varphi f_{\varepsilon}r^{d-1}\d r\nonumber \\
 & \ \leq\lim_{j\rightarrow\infty}\int_{0}^{R}\kappa(\sa v_{\varepsilon,j}^{\prime}\sa^{2}+\delta)^{\frac{q-2}{2}}v_{\varepsilon,j}^{\prime}\varphi^{\prime}r^{d-1}-\varphi f_{\varepsilon}r^{d-1}\d r.\label{eq:preauxiliary}
\end{align}
We intend to apply Fatou's lemma at the left-hand side and the dominated
convergence theorem at the right-hand side. Since $v_{\varepsilon}$
is Lipschitz continuous, we have $M:=\sup_{j}||v_{\varepsilon,j}^{\prime}||_{L^{\infty}(\supp\varphi)}<\infty$,
which justifies the use of the dominated convergence theorem. Observe
then that $v_{\varepsilon,j}^{\prime\prime}\leq C(\hat{q},\varepsilon,u)$
by semi-concavity. Hence
\begin{align*}
G_{\delta}(v_{\varepsilon,j})= & -\kappa(\sa v_{\varepsilon,j}^{\prime}\sa^{2}+\delta)^{\frac{q-2}{2}}\Big(\big(1+(q-2)\frac{\sa v_{\varepsilon,j}\sa^{2}}{\sa v_{\varepsilon,j}\sa^{2}+\delta}\big)v_{\varepsilon,j}^{\prime\prime}+\frac{d-1}{r}v_{\varepsilon,j}^{\prime}\Big)\\
\geq & -\kappa\delta^{\frac{q-2}{2}}(C(q,\hat{q},\varepsilon,u)+\frac{d-1}{r}M).
\end{align*}
Since $d-2>-1$, it follows from the above estimate that the integrand
at the left-hand side of (\ref{eq:preauxiliary}) has an integrable
lower bound independent of $j$. Thus
\begin{align}
\int_{0}^{R}\varphi\left(G_{\delta}(v_{\varepsilon})-f_{\varepsilon}\right)r^{d-1}\d r & \leq\int_{0}^{R}\kappa(\left|v_{\varepsilon}^{\prime}\right|^{2}+\delta)^{\frac{q-2}{2}}v_{\varepsilon}^{\prime}\varphi^{\prime}r^{d-1}-\varphi f_{\varepsilon}r^{d-1}\d r.\label{eq:auxiliary}
\end{align}

\textbf{(Step 2)} We let $\delta\rightarrow0$ in the auxiliary inequality
(\ref{eq:auxiliary}) and obtain
\begin{align}
 & \liminf_{\delta\rightarrow0}\int_{0}^{R}\varphi(G_{\delta}(v_{\varepsilon})-f_{\varepsilon})r^{d-1}dr\nonumber \\
 & \ \ \leq\lim_{\delta\rightarrow0}\int_{0}^{R}\kappa(\left|v_{\varepsilon}^{\prime}\right|^{2}+\delta)^{\frac{q-2}{2}}v_{\varepsilon}^{\prime}\varphi^{\prime}r^{d-1}-\varphi f_{\varepsilon}r^{d-1}dr\nonumber \\
 & \ \ =\int_{0}^{R}\kappa\left|v_{\varepsilon}^{\prime}\right|^{q-2}v_{\varepsilon}^{\prime}\varphi^{\prime}r^{d-1}-\varphi f_{\varepsilon}r^{d-1}\d r,\label{eq:auxiliary2}
\end{align}
where the use of the dominated convergence theorem was justified since
$v_{\varepsilon}$ is Lipschitz continuous. It now suffices to show
that the left-hand side of (\ref{eq:auxiliary2}) is non-negative
to finish the proof. By (\ref{eq:inf conv radial hess-1}) we have
\begin{equation}
v_{\varepsilon}^{\prime\prime}\leq\frac{\hat{q}-1}{\varepsilon}\left|v_{\varepsilon}^{\prime}\right|^{\frac{\hat{q}-2}{\hat{q}-1}}\label{eq:inf conv 2nd diff est}
\end{equation}
almost everywhere in $(0,R_{\varepsilon})$. Hence, when $v_{\varepsilon}^{\prime}\not=0$,
it holds that
\begin{align*}
G_{\delta}(v_{\varepsilon}) & =-\kappa(\left|v_{\varepsilon}^{\prime}\right|^{2}+\delta)^{\frac{q-2}{2}}\Big(\big(1+(q-2)\frac{\left|v_{\varepsilon}^{\prime}\right|^{2}}{\left|v_{\varepsilon}^{\prime}\right|^{2}+\delta}\big)v_{\varepsilon}^{\prime\prime}+\frac{d-1}{r}v_{\varepsilon}^{\prime}\Big)\\
 & \geq-\kappa\left|v_{\varepsilon}^{\prime}\right|^{q-2}(C(q,\hat{q},\varepsilon)\left|v_{\varepsilon}^{\prime}\right|^{\frac{\hat{q}-2}{\hat{q}-1}}+\frac{d-1}{r}\left|v_{\varepsilon}^{\prime}\right|)\\
 & =-\kappa(C(q,\hat{q},\varepsilon)\left|v_{\varepsilon}^{\prime}\right|^{q-2+\frac{\hat{q}-2}{\hat{q}-1}}+\frac{d-1}{r}\left|v_{\varepsilon}^{\prime}\right|^{q-1}),
\end{align*}
where $q-2+\frac{\hat{q}-2}{\hat{q}-1}\geq0$ by definition of $\hat{q}$.
Moreover, when $v_{\varepsilon}^{\prime}=0$, we have $G_{\delta}(v_{\varepsilon})\geq0$
directly by (\ref{eq:inf conv 2nd diff est}). Since $v_{\varepsilon}$
is Lipschitz continuous in the support of $\varphi$, these estimates
imply that the integrand at the left-hand side of (\ref{eq:auxiliary2})
has an integrable lower bound independent of $\delta$. Thus by Fatou's
lemma
\begin{align}
\liminf_{\delta\rightarrow0} & \int_{0}^{R}\varphi\left(G_{\delta}(v_{\varepsilon})-f_{\varepsilon}\right)r^{d-1}\d r\nonumber \\
\geq & \int_{0}^{R}\liminf_{\delta\rightarrow0}\varphi\left(G_{\delta}(v_{\varepsilon})-f_{\varepsilon}\right)r^{d-1}\d r\nonumber \\
= & \int_{\left\{ v_{\varepsilon}^{\prime}\not=0\right\} }\varphi\Big(-\kappa\left|v_{\varepsilon}^{\prime}\right|^{q-2}\big((q-1)v_{\varepsilon}^{\prime\prime}+\frac{d-1}{r}v_{\varepsilon}^{\prime}\big)-f_{\varepsilon}\Big)r^{d-1}\d r\nonumber \\
 & +\int_{\left\{ v_{\varepsilon}^{\prime}=0\right\} }\liminf_{\delta\rightarrow0}\varphi(-\kappa\delta^{\frac{q-2}{2}}v_{\varepsilon}^{\prime\prime}-f_{\varepsilon})r^{d-1}\d r\nonumber \\
=: & A_{1}+A_{2}.\label{eq:liminf est}
\end{align}
It follows directly from Lemma \ref{lem:inf conv pointwise} that
$A_{1}\geq0$. Moreover, if $r\in\left\{ v_{\varepsilon}^{\prime}=0\right\} $,
then Lemma \ref{lem:inf conv pointwise} implies that $f_{\varepsilon}(r)\leq0$
and inequality (\ref{eq:inf conv 2nd diff est}) reads as $v_{\varepsilon}^{\prime\prime}(r)\leq0$.
Hence also $A_{2}\geq0$. Combining (\ref{eq:auxiliary2}) and (\ref{eq:liminf est})
we have thus established the desired inequality.
\end{proof}
We use the following Caccioppoli's estimate to show that the sequence
$v_{\varepsilon}$ is bounded in the weighted Sobolev space.
\begin{lem}[Caccioppoli's estimate]
\label{lem:Caccioppoli} Let $v$ be a bounded weak supersolution
to (\ref{eq:radial eq}) in $(0,R)$. Suppose moreover that $v$ is
Lipschitz continuous in $(0,R^{\prime})$ for any $R^{\prime}\in(0,R)$.
Then for any non-negative $\xi\in C_{0}^{\infty}(-R,R)$ we have
\[
\int_{0}^{R}\left|v^{\prime}\right|^{q}\xi^{q}r^{d-1}\d r\leq C\int_{0}^{R}\left(\left|\xi^{\prime}\right|^{q}+\xi^{q}\left|f\right|\right)r^{d-1}\d r,
\]
where $C=C(\kappa,q,M)$ and $M=\left\Vert v\right\Vert _{L^{\infty}((0,R)\cap\supp\xi)}$.
\end{lem}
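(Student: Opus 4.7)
The plan is to follow the standard Caccioppoli scheme, using as test function a suitable combination of $v$ and $\xi$ in the weak supersolution inequality, and then absorbing the resulting gradient term via Young's inequality. Since weak supersolutions require non-negative test functions, and we want $v'$ to appear with the correct sign when differentiating, I will use
\[
\varphi := (2M - v)\,\xi^{q},
\]
where $M = \|v\|_{L^{\infty}((0,R)\cap\supp\xi)}$. On the support of $\xi$ one has $|v|\leq M$, hence $M \leq 2M - v \leq 3M$, so $\varphi\geq 0$. The Lipschitz assumption on $v$ in $(0,R')$ for every $R'\in(0,R)$, together with the fact that $\xi$ has compact support in $(-R,R)$, ensures that $\varphi\in W^{1,q}(r^{d-1},(0,R))$ with $\supp\varphi\subset [0,R')$; hence Lemma \ref{lem:admissible test functions} makes $\varphi$ admissible in the weak formulation.

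Next I compute the distributional derivative
\[
\varphi' = -v'\xi^{q} + (2M-v)\,q\xi^{q-1}\xi',
\]
and insert it into the weak supersolution inequality (\ref{eq:weak super}). After rearranging, the leading term $-\kappa |v'|^{q}\xi^{q}\,r^{d-1}$ can be moved to the left-hand side, giving
\[
\int_{0}^{R}\kappa |v'|^{q}\xi^{q}r^{d-1}\d r \leq \int_{0}^{R}\kappa q\, |v'|^{q-1}(2M-v)\,\xi^{q-1}|\xi'|\,r^{d-1}\d r + \int_{0}^{R}(2M-v)\,\xi^{q}|f|\,r^{d-1}\d r.
\]
Using the bound $0\leq 2M-v\leq 3M$ on $\supp\xi$ controls the constants in front of the two right-hand integrals.

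The main (and only non-routine) step is to absorb the gradient term on the right into the left-hand side. I would apply Young's inequality with exponents $q/(q-1)$ and $q$ to the factorization $|v'|^{q-1}\xi^{q-1}\cdot 3\kappa q M |\xi'|$, obtaining for every $\varepsilon>0$
\[
3\kappa q M\,|v'|^{q-1}\xi^{q-1}|\xi'| \leq \varepsilon |v'|^{q}\xi^{q} + C(\varepsilon,\kappa,q,M)\,|\xi'|^{q}.
\]
Choosing $\varepsilon = \kappa/2$ and moving the absorbed term across yields
\[
\tfrac{\kappa}{2}\int_{0}^{R}|v'|^{q}\xi^{q}r^{d-1}\d r \leq C(\kappa,q,M)\!\int_{0}^{R}|\xi'|^{q}r^{d-1}\d r + 3M\!\int_{0}^{R}\xi^{q}|f|\,r^{d-1}\d r,
\]
which, after dividing by $\kappa/2$, is exactly the stated estimate with $C=C(\kappa,q,M)$.

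The hard part is really just the bookkeeping around admissibility of $\varphi$: one needs to verify that the Lipschitz hypothesis suffices to place $(2M-v)\xi^{q}$ in the weighted Sobolev space and, via Lemma \ref{lem:admissible test functions}, in the correct class of test functions; everything else is the classical Young-absorption argument.
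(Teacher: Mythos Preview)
Your proposal is correct and follows essentially the same approach as the paper: the paper uses the test function $\varphi=(M-v)\xi^{q}$ (rather than your $(2M-v)\xi^{q}$), bounds $M-v\leq 2M$, applies Young's inequality to absorb the gradient term, and remarks that absorption is legitimate because the Lipschitz hypothesis makes $\int_{0}^{R}|v'|^{q}\xi^{q}r^{d-1}\,dr$ finite. Your choice of shift $2M$ instead of $M$ is immaterial.
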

\begin{proof}
Since $\xi\in C_{0}^{\infty}(-R,R)$, we can use $\varphi:=(M-v)\xi^{q}$
as a test function by Lemma \ref{lem:admissible test functions}.
This yields
\begin{align*}
0\leq & \int_{0}^{R}\kappa\left|v^{\prime}\right|^{q-2}v^{\prime}(-v^{\prime}\xi^{q}+(M-v)q\xi^{\prime}\xi^{q-1})r^{d-1}-(M-v)\xi^{q}fr^{d-1}\d r.
\end{align*}
Rearranging the terms and using that $(M-v)\leq2M$, we obtain 
\begin{equation}
\int_{0}^{R}\kappa\left|v^{\prime}\right|^{q}\xi^{q}r^{d-1}\d r\leq2M\int_{0}^{R}q\left|v^{\prime}\right|^{q-1}\left|\xi^{\prime}\right|\xi^{q-1}r^{d-1}+\xi^{q}\left|f\right|r^{d-1}\d r.\label{eq:caccioppoli 1}
\end{equation}
By Young's inequality, we have for any $\epsilon>0$
\[
q\left|v^{\prime}\right|^{q-1}\left|\xi^{\prime}\right|\xi^{q-1}r^{d-1}\leq\epsilon\left|v^{\prime}\right|^{q}\xi^{q}r^{d-1}+C(q,\epsilon)\left|\xi^{\prime}\right|^{q}r^{d-1}.
\]
Applying this to (\ref{eq:caccioppoli 1}), taking small enough $\epsilon>0$
and absorbing the term with $v^{\prime}$ to the left-hand side, we
obtain the desired estimate. Absorbing the term is justified as it
is finite by the Lipschitz continuity of $v$.
\end{proof}
It now remains to use the Caccioppoli's estimate to obtain a subsequence
of $\smash{v_{\varepsilon}}$ that converges to $v$ in the weighted
Sobolev space. Then we can pass to the limit to see that $v$ is a
weak supersolution to (\ref{eq:radial eq}) in $(0,R)$.
\begin{proof}[Proof of Theorem \ref{thm:visc is weak}]
 Set $v_{\varepsilon}(r):=u_{\varepsilon}(re_{1})$ and let $0<R^{\prime\prime}<R$.
We start by showing that $v_{\varepsilon}\rightarrow v$ in $W^{1,q}(r^{d-1},(0,R^{\prime\prime}))$.
By assuming that $\varepsilon$ is small enough, we find $R^{\prime}$
such that
\[
R^{\prime\prime}<R^{\prime}<R_{\varepsilon}<R.
\]
Since by Lemmas \ref{lem:inf conv is weak q > 2} and \ref{lem:inf-conv is weak}
the function $v_{\varepsilon}$ is a weak supersolution to $-\kappa\Delta_{q}^{d}v_{\varepsilon}\geq f_{\varepsilon}$
in $(0,R_{\varepsilon})$, Lemma \ref{lem:Caccioppoli} implies that
$v_{\varepsilon}^{\prime}$ is bounded in $L^{q}(r^{d-1},(0,R^{\prime}))$.
Thus by Lemma \ref{lem:H1q conv} we have $v\in W^{1,q}(r^{d-1},(0,R^{\prime}))$
and $v_{\varepsilon}^{\prime}\rightarrow v^{\prime}$ weakly in $L^{q}(r^{d-1},(0,R^{\prime}))$
up to a subsequence. We set
\[
\varphi:=(v-v_{\varepsilon})\xi^{q},
\]
where $\xi\in C_{0}^{\infty}(-R^{\prime},R^{\prime})$ is a non-negative
cut-off function such that $\xi\equiv1$ in $(0,R^{\prime\prime})$.
Using $\varphi$ as a test function in the weak formulation of $-\kappa\Delta_{q}^{d}v_{\varepsilon}\geq f_{\varepsilon}$
we obtain
\[
0\leq\int_{0}^{R^{\prime}}\kappa\sa v_{\varepsilon}^{\prime}\sa^{q-2}v_{\varepsilon}^{\prime}\big((v^{\prime}-v_{\varepsilon}^{\prime})\xi^{q}+q\xi^{\prime}\xi^{q-1}(v-v_{\varepsilon})\big)r^{d-1}-(v-v_{\varepsilon})\xi^{q}f_{\varepsilon}r^{d-1}\d r.
\]
Rearranging the terms and adding $\int_{0}^{R^{\prime}}\kappa\sa v^{\prime}\sa^{q-2}v^{\prime}(v^{\prime}-v_{\varepsilon}^{\prime})\xi^{q}r^{d-1}\d r$
to both sides of the inequality, we get
\begin{align*}
 & \int_{0}^{R^{\prime}}\kappa(\sa v^{\prime}\sa^{q-2}v^{\prime}-\sa v_{\varepsilon}^{\prime}\sa^{q-2}v_{\varepsilon}^{\prime})(v^{\prime}-v_{\varepsilon}^{\prime})\xi^{q}r^{d-1}\d r\\
 & \ \leq\int_{0}^{R^{\prime}}\kappa q\sa v_{\varepsilon}^{\prime}\sa^{q-1}\sa v-v_{\varepsilon}\sa\sa\xi^{\prime}\sa\xi^{q-1}r^{d-1}\d r\\
 & \ \ \ \ \ +\int_{0}^{R^{\prime}}\left|v-v_{\varepsilon}\right|\xi^{q}\left|f_{\varepsilon}\right|r^{d-1}\d r\\
 & \ \ \ \ \ +\int_{0}^{R^{\prime}}\kappa\sa v^{\prime}\sa^{q-2}v^{\prime}(v^{\prime}-v_{\varepsilon}^{\prime})\xi^{q}r^{d-1}\d r\\
 & \ =:A_{1}+A_{2}+A_{3}.
\end{align*}
Since $v_{\varepsilon}^{\prime}$ is bounded in $L^{q}(r^{d-1},(0,R^{\prime}))$,
$v_{\varepsilon}\rightarrow v$ in $L^{q}(r^{d-1},(0,R^{\prime}))$
and $f\in L^{\infty}$, it follows from H{\"o}lder's inequality that $A_{1},A_{2}\rightarrow0$
as $\varepsilon\rightarrow0$. Moreover, since $v_{\varepsilon}^{\prime}\rightarrow v^{\prime}$
weakly in $L^{q}(r^{d-1},(0,R^{\prime}))$, also $A_{3}$ converges
to zero. We conclude that $v_{\varepsilon}^{\prime}\rightarrow v^{\prime}$
strongly in $L^{q}(r^{d-1},(0,R^{\prime\prime}))$ by applying H{\"o}lder's
inequality and the following inequality (see \cite[p95-96]{lindqvist_plaplace})
\[
(\left|a\right|^{q-2}a-\left|b\right|^{q-2}b)\left(a-b\right)\geq\begin{cases}
\left(q-1\right)\left|a-b\right|^{2}(1+\left|a\right|^{2}+\left|b\right|^{2})^{\frac{q-2}{2}}, & 1<q<2,\\
2^{2-q}\left|a-b\right|^{q}, & q\geq2.
\end{cases}
\]

Recall then that since $v_{\varepsilon}$ is a weak supersolution
to $-\kappa\Delta_{q}^{d}v_{\varepsilon}\geq f_{\varepsilon}$ in
$(0,R_{\varepsilon})$, any $\varphi\in C_{0}^{\infty}(-R^{\prime\prime},R^{\prime\prime})$
satisfies
\[
\int_{0}^{R^{\prime\prime}}\kappa\sa v_{\varepsilon}^{\prime}\sa^{q-2}v_{\varepsilon}^{\prime}\varphi^{\prime}r^{d-1}-\varphi f_{\varepsilon}r^{d-1}\d r\geq0.
\]
Since $v_{\varepsilon}^{\prime}\rightarrow v^{\prime}$ strongly in
$L^{q}(r^{d-1},(0,R^{\prime\prime}))$, we may let $\varepsilon\rightarrow0$
in the above inequality. Since $R^{\prime\prime}<R$ was arbitrary,
the proof is finished.
\end{proof}
\smallskip{}

\section{The case of integer $d$ }

We show that if $d$ is an integer, then weak supersolutions to (\ref{eq:radial eq})
coincide with radial weak supersolutions to $\smash{-\Delta_{q}u\geq f(\left|x\right|)}$,
where $\smash{\Delta_{q}}$ is the usual $q$-Laplacian in $d$-dimensions.
We begin by recalling the definition of weak supersolutions to the
latter equation. 
\begin{defn}
\label{def:weak q-Laplace}Let $d$ be an integer and let $\smash{B_{R}\subset\mathbb{R}^{d}}$
be a ball centered at the origin. A function $\smash{u\in W_{loc}^{1,q}(B_{R})}$
is a weak supersolution to $\smash{-\Delta_{q}u\geq f(\left|x\right|)}$
in $\smash{B_{R}}$ if 
\[
\int_{B_{R}}\left|Du\right|^{q-2}Du\cdot D\varphi-\varphi f(\left|x\right|)\d x\geq0
\]
for all non-negative $\smash{\varphi\in C_{0}^{\infty}(B_{R})}$.
\end{defn}
We will use the following lemma which states that the weighted Sobolev
space $\smash{W^{1,q}(r^{d-1},(0,R))}$ can be identified with the
space of radial Sobolev functions in $d$-dimensions. Similar results
hold also for higher-order Sobolev spaces, see \cite{figueiredoSantosMiyagaki}.
\begin{lem}
\label{lem:radial sobolev spaces}Let $d$ be an integer. Assume that
$\smash{u:B_{R}\rightarrow\mathbb{R}}$ is radial, i.e.\ $u(x)=v(\left|x\right|)$
for all $\smash{x\in B_{R}}$. Then $\smash{u\in W^{1,q}(B_{R})}$
if and only if $\smash{v\in W^{1,q}(r^{d-1},(0,R))}$. Moreover, we
have
\begin{equation}
Du(x)=\frac{x}{\left|x\right|}v^{\prime}(\left|x\right|)\quad\text{for a.e. }x\in B_{R}.\label{eq:radial sobolev spaces formula}
\end{equation}
\end{lem}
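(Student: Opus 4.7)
The plan is to prove both implications by density-and-approximation arguments using polar coordinates. Throughout, let $\omega_{d-1}$ denote the surface measure of $S^{d-1}\subset\mathbb{R}^d$, so that for any nonnegative radial function $f(x)=\bar f(|x|)$ one has $\int_{B_R}f\,dx=\omega_{d-1}\int_0^R\bar f(r)\,r^{d-1}\,dr$.

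For the ``if'' direction, suppose $v\in W^{1,q}(r^{d-1},(0,R))$. By Theorem \ref{thm:w1q density} there exist $v_j\in C^\infty[0,R]$ with $v_j\to v$ in $W^{1,q}(r^{d-1},(0,R))$. Each radial extension $u_j(x):=v_j(|x|)$ is Lipschitz on $\overline{B_R}$, hence lies in $W^{1,q}(B_R)$, and a direct computation off the origin gives $Du_j(x)=v_j'(|x|)\,x/|x|$ almost everywhere. Setting $g(x):=v'(|x|)\,x/|x|$, polar coordinates yield
\[
\|u_j-u\|_{L^q(B_R)}^q=\omega_{d-1}\!\int_0^R|v_j-v|^qr^{d-1}\,dr,\quad
\|Du_j-g\|_{L^q(B_R)}^q=\omega_{d-1}\!\int_0^R|v_j'-v'|^qr^{d-1}\,dr,
\]
so both tend to zero. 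Passing to the limit in $\int_{B_R}u_j\partial_i\varphi\,dx=-\int_{B_R}\partial_i u_j\,\varphi\,dx$ for any $\varphi\in C_{0}^{\infty}(B_R)$ identifies $g$ as the weak gradient of $u$, giving $u\in W^{1,q}(B_R)$ together with \eqref{eq:radial sobolev spaces formula}.

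For the ``only if'' direction, suppose $u\in W^{1,q}(B_R)$ is radial. Let $\rho_\epsilon$ be a standard radial mollifier and set $u_\epsilon:=u*\rho_\epsilon\in C^\infty(B_{R-\epsilon})$. The substitution $z\mapsto Q^{-1}z$ for any $Q\in SO(d)$, combined with the radiality of $\rho_\epsilon$, shows $u_\epsilon$ is still radial, hence $u_\epsilon(x)=v_\epsilon(|x|)$ for some smooth $v_\epsilon$ with $Du_\epsilon(x)=v_\epsilon'(|x|)\,x/|x|$ when $x\ne 0$. Fix $0<a<b<R$. On the annulus $\{a<|x|<b\}$ we have $u_\epsilon\to u$ in $W^{1,q}$, which by polar coordinates is equivalent to $v_\epsilon$ being Cauchy in $W^{1,q}(a,b)$, since the weight $r^{d-1}$ is bounded above and below on $[a,b]$. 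Thus $v\in W^{1,q}(a,b)$ with $v'$ equal to the pointwise a.e.\ limit of $v_\epsilon'$, and the pointwise a.e.\ identity $Du(x)=v'(|x|)\,x/|x|$ follows on the annulus from the same $L^q$-limit together with passing to a subsequence. Exhausting $(0,R)$ by such intervals and invoking polar coordinates together with monotone convergence gives
\[
\int_0^R(|v|^q+|v'|^q)\,r^{d-1}\,dr=\omega_{d-1}^{-1}\bigl(\|u\|_{L^q(B_R)}^q+\|Du\|_{L^q(B_R)}^q\bigr)<\infty,
\]
and for any $\varphi\in C_{0}^{\infty}(0,R)$, choosing $(a,b)\Subset(0,R)$ with $\supp\varphi\subset(a,b)$, the identity $\int_0^R v\varphi'\,dr=-\int_0^R v'\varphi\,dr$ reduces to the corresponding identity in the unweighted space $W^{1,q}(a,b)$. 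Hence $v\in W^{1,q}(r^{d-1},(0,R))$ in the sense of the paper's definition.

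The main subtlety lies in the ``only if'' direction: one must identify the weak gradient of $u$ as having the radial structure $\alpha(|x|)\,x/|x|$ with $\alpha$ coinciding with the distributional derivative $v'$. Working on compact annuli $\{a<|x|<b\}\Subset B_R\setminus\{0\}$ circumvents this by reducing to the unweighted Sobolev setting, where the mollified approximants $u_\epsilon$ carry the explicit radial gradient formula. The integrability in the weighted norm and the behavior near the origin are then recovered globally by polar coordinates and monotone convergence.
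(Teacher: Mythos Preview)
Your proof is correct and close in spirit to the paper's. The ``if'' direction is essentially identical: both you and the paper approximate $v$ by smooth functions via Theorem~\ref{thm:w1q density}, compute the gradients of the radial extensions explicitly, and use polar coordinates to transfer convergence from the weighted one-dimensional space to $W^{1,q}(B_R)$.

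The ``only if'' direction differs in execution. The paper also starts from radial smooth approximants $u_n(x)=v_n(|x|)$ (asserting their existence without the explicit mollification argument you provide), but then works \emph{globally}: it observes that $|v_n'(r)-v_m'(r)|=\bigl|\tfrac{x}{|x|}\cdot(Du_n-Du_m)\bigr|\le|Du_n(x)-Du_m(x)|$ and applies polar coordinates once to conclude directly that $(v_n)$ is Cauchy in $W^{1,q}(r^{d-1},(0,R))$. Your route---first identifying $v'$ and the gradient formula on each compact annulus via the unweighted theory, then recovering the global weighted norm from the formula \eqref{eq:radial sobolev spaces formula} and polar coordinates---is a bit longer but makes the identification of $v'$ completely explicit and sidesteps any question about the behavior of the approximants near the origin. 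Both approaches are sound; the paper's is shorter, yours is more self-contained regarding the construction and local regularity of the approximants.
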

\begin{proof}
Suppose first that $\smash{v\in W^{1,q}(r^{d-1},(0,R))}$. By Lemma
\ref{thm:w1q density} there is a sequence $\smash{v_{n}\in C^{\infty}[0,R]}$
such that $\smash{v_{n}\rightarrow v}$ in $\smash{W^{1,q}(r^{d-1},(0,R))}$.
Setting $\smash{u_{n}(x):=v_{n}(\left|x\right|)}$ we have $\smash{u_{n}\in W^{1,q}(B_{R})}$
by Lipschitz continuity and
\begin{equation}
Du_{n}(x)=\frac{x}{\left|x\right|}v_{n}^{\prime}(\left|x\right|)\quad\text{for all }x\in B_{R}\setminus\left\{ 0\right\} .\label{eq:radial sobolev spaces 1}
\end{equation}
We obtain using the formula (9) in \cite[p280]{steinShakarchi05}
\begin{align*}
\int_{B_{R}}\left|u_{n}-u\right|^{q}\d x & =\int_{\partial B_{1}}\int_{0}^{R}\left|u_{n}(rz)-u(rz)\right|^{q}r^{d-1}\d r\d\sigma(z)\\
 & =\int_{\partial B_{1}}\int_{0}^{R}\left|v_{n}(\left|rz\right|)-v(\left|rz\right|)\right|^{q}r^{d-1}\d r\d\sigma(z)\\
 & =\sigma(\partial B_{1})\int_{0}^{R}\left|v_{n}(r)-v(r)\right|^{q}r^{d-1}\d r,
\end{align*}
where $\sigma$ is the spherical measure. Similarly, but now also
using (\ref{eq:radial sobolev spaces 1}), we compute
\begin{align*}
\int_{B_{R}}\left|Du_{n}-\frac{x}{\left|x\right|}v^{\prime}(\left|x\right|)\right|^{q}\d x= & \int_{B_{R}}\left|v_{n}^{\prime}(\left|x\right|)-v^{\prime}(\left|x\right|)\right|^{q}\d x\\
= & \sigma(\partial B_{1})\int_{0}^{R}\left|v_{n}^{\prime}(r)-v^{\prime}(r)\right|^{q}r^{d-1}\d r.
\end{align*}
Since $\smash{v_{n}\rightarrow v}$ in $\smash{W^{1,q}(r^{d-1},(0,R))}$,
it follows from the last two displays that $u\in W^{1,q}(B_{R})$
and that (\ref{eq:radial sobolev spaces formula}) holds.

Suppose then that $\smash{u\in W^{1,q}(B_{R})}$. Since $u$ is radial,
there exists a sequence of radial functions $\smash{u_{n}(x)=v_{n}(\left|x\right|)}$
such that $\smash{u_{n}\in C^{\infty}(B_{R})}$ and $\smash{u_{n}\rightarrow u}$
in $\smash{W^{1,q}(B_{R})}$. Now we have
\begin{align*}
\sigma(\partial B_{1})\int_{0}^{R}\left|v_{n}(r)-v(r)\right|^{q}r^{d-1}\d r & =\int_{\partial B_{1}}\int_{0}^{R}\left|v_{n}(\left|rz\right|)-v(\left|rz\right|)\right|^{q}r^{d-1}\d r\d\sigma(z)\\
 & =\int_{B_{R}}\left|u_{n}(x)-u(x)\right|^{q}\d x,
\end{align*}
which means that $\smash{v_{n}\rightarrow v}$ in $\smash{L^{q}(r^{d-1},(0,R))}$.
Observe then that for all $m,n\in\mathbb{N}$ we have
\begin{align*}
\sigma(\partial B_{1})\int_{0}^{R}\left|v_{n}^{\prime}(r)-v_{m}^{\prime}(r)\right|r^{d-1}\d r & =\int_{B_{R}}\left|v_{n}^{\prime}(\left|x\right|)-v_{m}^{\prime}(\left|x\right|)\right|^{q}\d x\\
 & =\int_{B_{R}}\left|\frac{x}{\left|x\right|}\cdot Du_{n}(x)-\frac{x}{\left|x\right|}\cdot Du_{m}(x)\right|^{q}\d x\\
 & \leq\int_{B_{R}}\left|Du_{n}(x)-Du_{m}(x)\right|^{q}\d x.
\end{align*}
In other words, $\smash{v_{n}}$ is Cauchy in $\smash{W^{1,q}(r^{d-1},(0,R))}$
and thus converges to some function. This function has to be $v$
since $\smash{v_{n}\rightarrow v}$ in $\smash{L^{q}(r^{d-1},(0,R))}$.
Hence we have established that $\smash{v\in W^{1,q}(r^{d-1},(0,R))}$.
The formula (\ref{eq:radial sobolev spaces formula}) now follows
from the first part of the proof.
\end{proof}
\begin{thm}
\label{thm:q laplace equivalence}Let $d$ be an integer. Then $v$
is a radial weak supersolution to (\ref{eq:radial eq}) in $(0,R)$
if and only if the function $u(x):=v(\left|x\right|)$ is a weak supersolution
to $-\Delta_{q}u=f(\left|x\right|)$ in $B_{R}\subset\mathbb{\mathbb{R}}^{d}$. 
\end{thm}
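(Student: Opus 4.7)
The proof rests on Lemma~\ref{lem:radial sobolev spaces}, which identifies radial $W^{1,q}(B_R)$ functions with $W^{1,q}(r^{d-1},(0,R))$ and gives the pointwise formula $Du(x)=(x/|x|)\,v'(|x|)$ a.e., combined with a change to spherical coordinates in the weak formulations. Under this identification the Sobolev regularity required in Definitions~\ref{def:weak sol} and \ref{def:weak q-Laplace} already matches, so it remains only to translate the integral inequalities.

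For the forward implication, I would take an arbitrary non-negative test function $\varphi \in C_0^\infty(B_R)$ for the $d$-dimensional weak formulation and form its spherical average
\[
\tilde\varphi(r) := \frac{1}{\sigma(\partial B_1)}\int_{\partial B_1}\varphi(rz)\,d\sigma(z),
\]
which is non-negative, lies in $C^\infty[0,R)$, and is compactly supported in $[0,R')$ for some $R'<R$. By Lemma~\ref{lem:admissible test functions} it is admissible in (\ref{eq:weak super}). Using Lemma~\ref{lem:radial sobolev spaces}, the identity $(x/|x|)\cdot D\varphi(x)=(\partial/\partial r)\varphi(rz)$, and $\int_{\partial B_1}(\partial/\partial r)\varphi(rz)\,d\sigma(z)=\sigma(\partial B_1)\tilde\varphi'(r)$, spherical coordinates yield
\[
\int_{B_R}|Du|^{q-2}Du\cdot D\varphi\,dx=\sigma(\partial B_1)\int_0^R|v'|^{q-2}v'\tilde\varphi'\,r^{d-1}\,dr,
\]
and the analogous identity for the source term with $\tilde\varphi$ in place of $\varphi$. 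Testing (\ref{eq:weak super}) against $\tilde\varphi$ then produces the desired $d$-dimensional inequality (up to the positive multiplicative constant $\kappa$, which can be absorbed into the source term).

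For the converse, given a non-negative $\eta \in C_0^\infty(-R,R)$, I would form the radial function $\varphi(x):=\eta(|x|)$ on $B_R$. This is Lipschitz with compact support in $B_R$, hence in $W^{1,q}_0(B_R)$, so by density of $C_0^\infty(B_R)$ in $W^{1,q}_0(B_R)$ it is admissible in Definition~\ref{def:weak q-Laplace}. The same spherical coordinates computation now runs in reverse and transfers the inequality for $u$ to (\ref{eq:weak super}) for $v$.

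The main subtlety is the matching of test function classes at the origin: the spherical average of a general $\varphi \in C_0^\infty(B_R)$ need not extend to a $C_0^\infty(-R,R)$ function, as its odd derivatives at $0$ may fail to vanish, and conversely the radial extension $\eta(|x|)$ of a general $\eta \in C_0^\infty(-R,R)$ is only Lipschitz at the origin. Both difficulties are circumvented by the enlarged test function class furnished by Lemma~\ref{lem:admissible test functions} on the one-dimensional side, and by the density of $C_0^\infty(B_R)$ in $W^{1,q}_0(B_R)$ on the $d$-dimensional side.
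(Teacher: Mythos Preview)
Your proposal is correct and, for the converse direction, essentially identical to the paper's argument. In the forward direction the paper takes a slightly simpler route that avoids the spherical average altogether: for each fixed $z\in\partial B_1$ the slice $\phi_z(r):=\varphi(rz)$ already lies in $C_0^\infty(-R,R)$, because $\varphi\in C_0^\infty(B_R)$ is defined on the whole ball and the line $\{rz:r\in(-R,R)\}$ stays inside $B_R$. Hence the inner integral $\int_0^R |v'|^{q-2}v'\phi_z' r^{d-1}-\phi_z f r^{d-1}\,dr$ is non-negative for every $z$ directly from Definition~\ref{def:weak sol}, and integrating over $\partial B_1$ finishes the job. This bypasses entirely the subtlety you flag about the spherical average failing to extend smoothly across $r=0$, so Lemma~\ref{lem:admissible test functions} is not needed for this half. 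Your averaging argument is nonetheless valid; it just introduces an extra step that the slice-by-slice argument makes unnecessary. Your remark about the constant $\kappa$ is well taken; the paper's proof silently drops it as well.
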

\begin{proof}
Suppose first that $v$ is a weak supersolution to (\ref{eq:radial eq})
in $(0,R)$. By Lemma \ref{lem:radial sobolev spaces} we have at
least $\smash{u\in W_{loc}^{1,q}(B_{R})}$. Let $\smash{\varphi\in C_{0}^{\infty}(B_{R})}$
be a non-negative test function. Then by \cite[p280]{steinShakarchi05}
and (\ref{eq:radial sobolev spaces formula}) we have
\begin{align*}
 & \int_{B_{R}}\left|Du\right|^{q-2}Du\cdot D\varphi-\varphi f(\left|x\right|)\d x\\
 & \ =\int_{\partial B_{1}}\int_{0}^{R}\left|Du(rz)\right|^{q-2}Du(rz)\cdot D\varphi(rz)r^{d-1}-\varphi(rz)f(\left|rz\right|)r^{d-1}\d r\d\sigma(z)\\
 & \ =\int_{\partial B_{1}}\int_{0}^{R}\left|v^{\prime}(r)\right|^{q-2}v^{\prime}(r)z\cdot D\varphi(rz)r^{d-1}-\varphi(rz)f(r)r^{d-1}\d r\d\sigma(z)\geq0,
\end{align*}
where the last inequality follows from the assumption that $v$ is
a weak supersolution to (\ref{eq:radial eq}) and that $\phi(r):=\varphi(rz)$,
$z\in\partial B_{1}$, is an admissible test function in Definition
\ref{def:weak sol}. Thus $u$ is a weak supersolution to $\smash{-\Delta_{q}u\geq f(\left|x\right|)}$
in $\smash{B_{R}}$. 

Suppose then that $u$ is a radial weak supersolution to $\smash{-\Delta_{q}u\geq f(\left|x\right|)}$
in $\smash{B_{R}}$. By Lemma \ref{lem:radial sobolev spaces} we
have $\smash{v\in W^{1,q}(r^{d-1},(0,R^{\prime}))}$ for all $\smash{R^{\prime}\in(0,R)}$.
Let $\smash{\phi\in C_{0}^{\infty}(-R,R)}$ be a non-negative test
function and set $\smash{\varphi(x):=\phi(\left|x\right|)}$. Then
$\varphi$ is a Lipschitz continuous function that is compactly supported
in $\smash{B_{R}}$ and therefore an admissible test function in Definition
\ref{def:weak q-Laplace}. Using formula (\ref{eq:radial sobolev spaces formula})
we obtain
\begin{align*}
0\geq & \int_{B_{R}}\left|Du\right|^{q-2}Du\cdot D\varphi-\varphi f(\left|x\right|)\d x\\
= & \int_{B_{R}}\left|\frac{x}{\left|x\right|}v^{\prime}(\left|x\right|)\right|^{q-2}v^{\prime}(\left|x\right|)\frac{x}{\left|x\right|}\cdot\frac{x}{\left|x\right|}\phi^{\prime}(\left|x\right|)-\phi(\left|x\right|)f(\left|x\right|)\d x\\
= & \int_{B_{R}}\left|v^{\prime}(\left|x\right|)\right|^{q-2}v^{\prime}(\left|x\right|)\phi^{\prime}(\left|x\right|)-\phi(\left|x\right|)f(\left|x\right|)\d x\\
= & \sigma(\partial B_{1})\int_{0}^{R}\left|v^{\prime}(r)\right|^{q-2}v^{\prime}(r)\phi^{\prime}(r)r^{d-1}-\phi(r)f(r)r^{d-1}\ dr,
\end{align*}
which means that $v$ is a weak supersolution to (\ref{eq:radial eq})
in $(0,R)$.
\end{proof}
Combining Theorems \ref{thm:weak is visc}, \ref{thm:visc is weak}
and \ref{thm:q laplace equivalence} we get the following corollary.
\begin{cor}
Let $d$ be an integer. Then $u(x):=v_{\ast}(\left|x\right|)$ is
a bounded viscosity supersolution to (\ref{eq:normplap f}) in $\smash{B_{R}\subset\mathbb{R}^{N}}$
if and only if $w(x):=v(\left|x\right|)$ is a bounded weak supersolution
to $\smash{-\Delta_{q}w=f(\left|x\right|)}$ in $B_{R}\subset\mathbb{R}^{d}$.
\end{cor}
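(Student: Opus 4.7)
The plan is simply to chain the three main equivalence results already established in the paper. The statement is the composition of (i) Theorems \ref{thm:weak is visc} and \ref{thm:visc is weak}, which identify bounded radial viscosity super-solutions of (\ref{eq:normplap f}) on $B_R \subset \mathbb{R}^N$ with bounded weak super-solutions of the one-dimensional fictitious-dimension equation (\ref{eq:radial eq}) on $(0,R)$, and (ii) Theorem \ref{thm:q laplace equivalence}, which (because $d$ is an integer) further identifies bounded weak super-solutions of (\ref{eq:radial eq}) on $(0,R)$ with bounded radial weak super-solutions of $-\Delta_q w \geq f(|x|)$ on $B_R \subset \mathbb{R}^d$.

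For the forward implication, I would start from the hypothesis that $u(x) = v_\ast(|x|)$ is a bounded viscosity supersolution of (\ref{eq:normplap f}) on $B_R \subset \mathbb{R}^N$. Applying Theorem \ref{thm:visc is weak} to $u$ yields that $v_\ast$ is a bounded weak supersolution of (\ref{eq:radial eq}) on $(0,R)$. Since every element of $W^{1,q}(r^{d-1},(0,R'))$ has a continuous representative, $v_\ast$ and $v$ agree almost everywhere and determine the same Sobolev class, so $v$ is itself a bounded weak supersolution. Invoking Theorem \ref{thm:q laplace equivalence} then promotes $v$ to a bounded weak supersolution $w(x) = v(|x|)$ of $-\Delta_q w \geq f(|x|)$ on $B_R \subset \mathbb{R}^d$, with boundedness preserved at every step.

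For the reverse implication, I would run the same chain backwards. Starting from the hypothesis that $w(x) = v(|x|)$ is a bounded weak supersolution of $-\Delta_q w \geq f(|x|)$ on $B_R \subset \mathbb{R}^d$, Theorem \ref{thm:q laplace equivalence} transports this into a bounded weak supersolution $v$ of (\ref{eq:radial eq}) on $(0,R)$, and Theorem \ref{thm:weak is visc} then concludes that $u(x) = v_\ast(|x|)$ is a viscosity supersolution of (\ref{eq:normplap f}) on $B_R$. There is no substantive obstacle here, as all the analytical work has been carried out in Sections 3--5; the only minor bookkeeping is to match the representative of $v$ in use (the continuous representative on the weak side, and the lower semicontinuous regularization $v_\ast$ on the viscosity side), and this is exactly what the formulations $u(x) = v_\ast(|x|)$ and $w(x) = v(|x|)$ in the statement of the corollary were arranged to accommodate.
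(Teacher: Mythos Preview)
Your proposal is correct and follows exactly the route the paper takes: the corollary is stated immediately after the sentence ``Combining Theorems \ref{thm:weak is visc}, \ref{thm:visc is weak} and \ref{thm:q laplace equivalence} we get the following corollary,'' with no further argument given. Your write-up is simply a careful unpacking of that sentence, including the bookkeeping about the representative $v$ versus $v_\ast$, which the paper leaves implicit.
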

\begin{rem}
Let us conclude this section with a brief remark on the special case
where $\eqref{eq:normplap f}$ is simply the homogeneous $p$-Laplace
equation ($q=p$ and $f\equiv0$). Recall that $p$-superharmonic
functions are defined as lower semicontinuous functions that satisfy
a comparison principle with respect to the solutions of the $p$-Laplace
equation \cite{lindqvist86}. In particular, the so called fundamental
solution
\[
V(x)=\begin{cases}
\left|x\right|^{\frac{p-N}{p-1}}, & p\not=N,\\
\log(\left|x\right|), & p=N,
\end{cases}
\]
is $p$-superharmonic. It is possible to show that if $u(x):=v(\left|x\right|)$
is a radial $p$-superharmonic function, then $v$ satisfies a comparison
principle with respect to weak solutions of (\ref{eq:radial eq}).
The converse is also true. If $v:[0,R)\rightarrow(-\infty,\infty]$,
$v\not\equiv\infty$, is a lower semicontinuous function that satisfies
a comparison principle with respect to weak solutions of (\ref{eq:radial eq}),
then $u$ is $p$-superharmonic. However, for expository reasons we
have decided to not discuss this further here.
\end{rem}

\appendix

\section{Some properties of the weighted Sobolev space}

In this section we collect some basic facts about the weighted Sobolev
space $W^{1,q}(r^{d-1},(0,R))$, where $d>1$. In particular, we have
the following theorem from \cite{kufner85} about the density of smooth
functions.
\begin{thm}
\label{thm:w1q density} The set
\[
C^{\infty}[0,R]:=\left\{ v_{|(0,R)}:v\in C^{\infty}(\mathbb{R})\right\} 
\]
is dense in $W^{1,q}(r^{d-1},(0,R))$.
\end{thm}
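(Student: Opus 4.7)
The plan is to produce the approximation in two stages: first regularize $v$ near the singular endpoint $r=0$ so that the resulting function lies in the unweighted Sobolev space $W^{1,q}(0,R)$, and then invoke the classical fact that $C^{\infty}[0,R]$ is dense there.

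For the first stage, using the continuous representative of $v$ on $(0,R]$, I would set $v_{\epsilon}(r) := v(\max(r,\epsilon))$, so that $v_{\epsilon}$ equals the constant $v(\epsilon)$ on $(0,\epsilon)$ and coincides with $v$ on $[\epsilon,R)$. The distributional derivative is $v_{\epsilon}'= v'\, \chi_{(\epsilon,R)}$, which converges to $v'$ in $L^{q}(r^{d-1},(0,R))$ by dominated convergence. The $L^{q}$ convergence of $v_{\epsilon}$ itself reduces, via the triangle inequality, to showing that $|v(\epsilon)|^{q}\epsilon^{d}\to 0$ as $\epsilon\to 0$.

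To establish this pointwise control I would use a Hardy-type estimate: writing $v(\epsilon)=v(R/2)-\int_{\epsilon}^{R/2} v'(s)\d s$ and applying H\"older's inequality against the weight $s^{d-1}$,
\[
|v(\epsilon)|^{q}\leq C + C\,\|v'\|_{L^{q}(r^{d-1})}^{q}\Bigl(\int_{\epsilon}^{R/2}s^{-(d-1)/(q-1)}\d s\Bigr)^{q-1}.
\]
A short case analysis on the sign of $d-q$ then shows that multiplying through by $\epsilon^{d}$ yields something tending to zero: when $d<q$ the integral is bounded and $\epsilon^{d}\to 0$; when $d=q$ the integral grows only logarithmically and is annihilated by $\epsilon^{q}$; when $d>q$ the integral is of order $\epsilon^{q-d}$, so the product is of order $\epsilon^{q}$. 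This is the only place where the assumption $d>1$ is genuinely needed, and the borderline case $d=q$ is the main obstacle.

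Having $v_{\epsilon}\to v$ in $W^{1,q}(r^{d-1},(0,R))$, I would then observe that $v_{\epsilon}$ is bounded (because $v$ is continuous on the compact set $[\epsilon,R]$) and that its derivative, being supported in $(\epsilon,R)$, belongs to the ordinary space $L^{q}(0,R)$ since the weight is bounded below by $\epsilon^{d-1}$ there. Hence $v_{\epsilon}\in W^{1,q}(0,R)$, and the classical density theorem on a bounded interval (cutoff near $R$ plus Friedrichs mollification) yields $\phi_{n}\in C^{\infty}[0,R]$ with $\phi_{n}\to v_{\epsilon}$ in $W^{1,q}(0,R)$. Since $r^{d-1}\leq R^{d-1}$ on $(0,R)$, this unweighted convergence immediately upgrades to convergence in $W^{1,q}(r^{d-1},(0,R))$, and a diagonal argument combining the two approximations finishes the proof.
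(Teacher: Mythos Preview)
Your argument is correct and takes a different route from the paper's. Both proofs share the same endgame---reducing to the classical density of $C^{\infty}[0,R]$ in the unweighted space $W^{1,q}(0,R)$ and then using $r^{d-1}\le R^{d-1}$ to upgrade the convergence---but they regularize near the singular endpoint $r=0$ by different mechanisms. The paper first localizes via a partition of unity $v=\theta_1 v+\theta_2 v$ and approximates the piece $w=\theta_1 v$ supported near the origin by its \emph{translates} $w_\lambda(r)=w(r+\lambda)$; the convergence $w_\lambda\to w$ in the weighted norm is obtained from the $q$-mean continuity of $g(r)=w'(r)\,r^{(d-1)/q}$ in ordinary $L^q(0,R)$, together with a dominated-convergence argument for the factor $\bigl|1-((r-\lambda)/r)^{(d-1)/q}\bigr|$. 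No pointwise control on $v$ near $0$ is ever invoked. You instead \emph{freeze} $v$ on $(0,\epsilon)$ by setting $v_\epsilon(r)=v(\max(r,\epsilon))$, and the price is the pointwise Hardy-type decay $|v(\epsilon)|^q\epsilon^d\to 0$, which your case analysis on the sign of $d-q$ handles correctly (the product is $O(\epsilon^{\min(d,q)})$ up to a logarithm). The paper's translation approach is softer, relying only on general $L^q$ facts; yours is more hands-on but yields, as a by-product, an explicit growth bound for $|v(r)|$ as $r\to 0$ that the translation argument does not supply.
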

\begin{proof}
Let $v\in W^{1,q}(r^{d-1},(0,R))$. Take $\theta_{1}$,$\theta_{2}\in C^{\infty}(\mathbb{R})$
such that $0\leq\theta_{i}\leq1$, $\theta_{1}+\theta_{2}=1$ in $[0,R]$
and $\supp\theta_{1}\subset(-\infty,R^{\prime})$, $\supp\theta_{2}\subset(R^{\prime\prime},\infty)$
for some $0<R^{\prime\prime}<R^{\prime}<R$. Then we have
\[
v=\theta_{1}v+\theta_{2}v.
\]
Since $\theta_{2}v$ vanishes near zero, we have $\theta_{2}v\in W^{1,q}(0,R)$.
Hence by \cite[Theorem 8.2]{brezis2011} there exists a sequence of
functions in $C^{\infty}[0,R]$ that converges to $\theta_{2}v$ in
$W^{1,q}(0,R)$ and thus also in $W^{1,q}(r^{d-1},(0,R))$. Consequently
it remains to approximate the function
\[
w:=\theta_{1}v.
\]
For $\lambda>0$, we define the function $w_{\lambda}:(-\lambda,R)\rightarrow\mathbb{R}$
by setting
\[
w_{\lambda}(r):=w(r+\lambda).
\]
We show that $w_{\lambda}\rightarrow w$ in $W^{1,q}(r^{d-1},(0,R))$
as $\lambda\rightarrow0$. We start with the estimate
\begin{align}
 & \int_{0}^{R}\sa w_{\lambda}^{\prime}-w^{\prime}\sa^{q}r^{d-1}\d r\nonumber \\
 & \ =\int_{0}^{R}\sa w_{\lambda}^{\prime}r^{\frac{d-1}{q}}-w_{\lambda}^{\prime}\cdot(r+\lambda)^{\frac{d-1}{q}}+w_{\lambda}^{\prime}\cdot(r+\lambda)^{\frac{d-1}{q}}-w^{\prime}r^{\frac{d-1}{q}}\sa^{q}\d r\nonumber \\
 & \ \leq2^{q-1}\big(\int_{0}^{R}\sa w_{\lambda}^{\prime}\sa^{q}\sa r^{\frac{d-1}{q}}-(r+\lambda)^{\frac{d-1}{q}}\sa^{q}\d r+\int_{0}^{R}\sa w_{\lambda}^{\prime}\cdot(r+\lambda)^{\frac{d-1}{q}}-w^{\prime}r^{\frac{d-1}{q}}\sa^{q}\d r\big)\nonumber \\
 & =:2^{q-1}(I_{1}+I_{2}).\label{eq:density 1}
\end{align}
To see that $I_{1}\rightarrow0$ as $\lambda\rightarrow0$, fix $\varepsilon>0$.
Since $w^{\prime}\in L^{q}(r^{d-1},(0,R))$, we can take positive
$\delta=\delta(\varepsilon)<1$ such that 
\begin{equation}
\int_{0}^{2\delta}\left|w^{\prime}\right|^{q}r^{d-1}\d r<\varepsilon.\label{eq:density 0}
\end{equation}
Then for all $0<\lambda<\delta$ we have
\begin{align}
I_{1}= & \int_{0}^{R}\left|w^{\prime}(r+\lambda)\right|^{q}(r+\lambda)^{d-1}\bigg|1-\frac{r^{\frac{d-1}{q}}}{(r+\lambda)^{\frac{d-1}{q}}}\bigg|^{q}\d r\nonumber \\
\leq & \int_{0}^{\delta}\left|w^{\prime}(r+\lambda)\right|^{q}(r+\lambda)^{d-1}\d r+\int_{\delta}^{R}\left|w^{\prime}(r+\lambda)\right|^{q}(r+\lambda)^{d-1}\bigg|1-\frac{r^{\frac{d-1}{q}}}{(r+\lambda)^{\frac{d-1}{q}}}\bigg|^{q}\d r\nonumber \\
= & \int_{\lambda}^{\delta+\lambda}\left|w^{\prime}(r)\right|^{q}r^{d-1}\d r+\int_{\delta+\lambda}^{R+\lambda}\left|w^{\prime}(r)\right|^{q}r^{d-1}\bigg|1-\frac{(r-\lambda)^{\frac{d-1}{q}}}{r^{\frac{d-1}{q}}}\bigg|^{q}\d r\nonumber \\
\leq & \varepsilon+\int_{\delta}^{R+1}\left|w^{\prime}(r)\right|^{q}r^{d-1}\bigg|1-\frac{(r-\lambda)^{\frac{d-1}{q}}}{r^{\frac{d-1}{q}}}\bigg|^{q}\d r,\label{eq:density 4}
\end{align}
where in the last estimate we used (\ref{eq:density 0}). Since the
term
\[
\bigg|1-\frac{(r-\lambda)^{\frac{d-1}{q}}}{r^{\frac{d-1}{q}}}\bigg|^{q}
\]
is bounded by $1$ and converges to zero as $\lambda\rightarrow0$
for all $r>\delta$, it follows from Lebesgue's dominated convergence
theorem that for small enough $\lambda=\lambda(\varepsilon)<\delta$
we have
\begin{equation}
\int_{\delta}^{R+1}\left|w^{\prime}(r)\right|^{q}r^{d-1}\bigg|1-\frac{(r-\lambda)^{\frac{d-1}{q}}}{r^{\frac{d-1}{q}}}\bigg|^{q}\d r<\varepsilon.\label{eq:density 5}
\end{equation}
It follows from (\ref{eq:density 4}) and (\ref{eq:density 5}) that
$I_{1}\rightarrow0$ as $\lambda\rightarrow0$. Observe then that
\begin{equation}
I_{2}=\int_{0}^{R}\sa w^{\prime}(r+\lambda)(r+\lambda)^{\frac{d-1}{q}}-w^{\prime}(r)r^{\frac{d-1}{q}}\sa^{q}\d r=\int_{0}^{R}\sa g(r+\lambda)-g(r)\sa^{q}\d r,\label{eq:density 6}
\end{equation}
where $g(r)=w^{\prime}(r)r^{\frac{d-1}{q}}.$ Since $w^{\prime}\in L^{q}(r^{d-1},(0,R))$,
we have $g\in L^{q}(0,R)$. Thus $g$ is $q$-mean continuous by \cite[Theorem 3.3.3]{pickKufnerJohnFucik19}.
This means that the integral at the right-hand side of (\ref{eq:density 6})
converges to zero as $\lambda\rightarrow0$ and so also $I_{2}\rightarrow0$.
It now follows from (\ref{eq:density 1}) that $w_{\lambda}^{\prime}\rightarrow w^{\prime}$
in $L^{q}(r^{d-1},(0,R))$ and the convergence $w_{\lambda}\rightarrow w$
is seen in the same way. Consequently, for any $\varepsilon>0$ we
may take $\lambda_{\varepsilon}>0$ such that 
\begin{equation}
\left\Vert w_{\lambda_{\varepsilon}}-w\right\Vert _{W^{1,q}(r^{d-1},(0,R))}<\varepsilon.\label{eq:density 2}
\end{equation}
Observe now that $w_{\lambda_{\varepsilon}}\in W^{1,q}(-\mu,R)$ for
some $\mu\in(0,\lambda_{\varepsilon})$. Hence there is a function
$\psi\in C^{\infty}[-\mu,R]$ such that 
\begin{equation}
\left\Vert w_{\lambda_{\varepsilon}}-\psi\right\Vert _{W^{1,q}(-\mu,R)}<\varepsilon.\label{eq:density 3}
\end{equation}
Using (\ref{eq:density 2}) and (\ref{eq:density 3}) we obtain
\begin{align*}
\left\Vert w-\psi\right\Vert _{W^{1,q}(r^{d-1},(0,R))}\leq & \left\Vert w_{\lambda_{\varepsilon}}-\psi\right\Vert _{W^{1,q}(r^{d-1},(0,R))}+\left\Vert w_{\lambda_{\varepsilon}}-w\right\Vert _{W^{1,q}(r^{d-1},(0,R))}\\
\leq & \big(\int_{0}^{R}\sa w_{\lambda_{\varepsilon}}-\psi\sa^{q}r^{d-1}+\sa w_{\lambda_{\varepsilon}}^{\prime}-\psi^{\prime}\sa^{q}r^{d-1}\d r\big)^{1/q}+\varepsilon\\
\leq & R^{\frac{d-1}{q}}\big(\int_{0}^{R}\sa w_{\lambda_{\varepsilon}}-\psi\sa^{q}+\sa w_{\lambda_{\varepsilon}}^{\prime}-\psi^{\prime}\sa^{q}\d r\big)^{1/q}+\varepsilon\\
\leq & R^{\frac{d-1}{q}}\left\Vert w_{\lambda_{\varepsilon}}-\psi\right\Vert _{W^{1,q}(-\mu,R)}+\varepsilon\\
\leq & \varepsilon(R^{\frac{d-1}{q}}+1).
\end{align*}
Thus $w$ can be approximated by functions in $C^{\infty}[0,R]$ and
the proof is finished.
\end{proof}
\begin{lem}
\label{lem:sobolev is in H1q}The usual Sobolev space $W^{1,q}(0,R)$
is contained in $W^{1,q}(r^{d-1},(0,R))$. 
\end{lem}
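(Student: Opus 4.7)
The plan is to verify the containment directly from the definitions, using only that the weight $r^{d-1}$ is bounded on $(0,R)$ when $d>1$.

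First I would observe that since $d>1$, on the interval $(0,R)$ we have the pointwise bound $r^{d-1} \leq R^{d-1}$. Given any $v \in W^{1,q}(0,R)$, both $v$ and its usual distributional derivative $v'$ lie in $L^q(0,R)$. Applying the weight bound gives
\[
\int_0^R |v|^q r^{d-1}\,dr \leq R^{d-1}\int_0^R |v|^q\,dr < \infty,
\]
and similarly for $v'$. Hence $v$ and $v'$ both belong to the weighted Lebesgue space $L^q(r^{d-1},(0,R))$.

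Next I would check that $v'$ continues to serve as the distributional derivative of $v$ in the sense required by the definition of $W^{1,q}(r^{d-1},(0,R))$. This is immediate: both spaces use the same test function class $C_0^\infty(0,R)$ and the very same integration-by-parts identity
\[
\int_0^R v'\varphi\,dr = -\int_0^R v\varphi'\,dr \quad \text{for all } \varphi \in C_0^\infty(0,R),
\]
with no weight appearing in the definition of the distributional derivative itself. Thus $v \in W^{1,q}(r^{d-1},(0,R))$, which concludes the argument.

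There is no real obstacle here; the containment is essentially a one-line observation once one notes that the weight is bounded above on $(0,R)$ and that the notion of distributional derivative is independent of the weight. The only minor point worth emphasizing is that the inclusion is strict in general (as seen already by the example in Section 2, where $r^{-\alpha}$ sits in the weighted space but not in $L^q(0,R)$ when $\alpha q \geq 1$), so the containment only goes in one direction.
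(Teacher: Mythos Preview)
Your argument is correct and is essentially identical to the paper's proof: both use the bound $r^{d-1}\le R^{d-1}$ on $(0,R)$ to get $L^{q}(0,R)\subset L^{q}(r^{d-1},(0,R))$, and note that the distributional derivative carries over unchanged. The remark about strictness of the inclusion is extra but accurate.
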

\begin{proof}
If $v\in W^{1,q}(0,R)$, then $v$ has a distributional derivative
$v^{\prime}$. The claim then follows from the inclusion $L^{q}(0,R)\subset L^{q}(r^{d-1},(0,R))$
which holds since
\[
\int_{0}^{R}\left|v\right|^{q}r^{d-1}\d r\leq\int_{0}^{R}\left|v\right|^{q}R^{d-1}\d r.\qedhere
\]
\end{proof}
\begin{lem}
\label{lem:H1q conv}Let $\smash{v_{n}\in W^{1,q}(r^{d-1},(0,R))}$
be a sequence such that
\[
v_{n}\rightarrow v\text{ weakly in }L^{q}(r^{d-1},(0,R))
\]
and $\smash{v_{n}^{\prime}}$ is bounded in $\smash{L^{q}(r^{d-1},(0,R))}$.
Then $\smash{v\in W^{1,q}(r^{d-1},(0,R))}$ and
\[
v_{n}^{\prime}\rightarrow v^{\prime}\text{ weakly in }L^{q}(r^{d-1},(0,R))
\]
up to a subsequence.
\end{lem}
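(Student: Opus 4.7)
The plan is to exploit the reflexivity of the weighted Lebesgue space to extract a weakly convergent subsequence of $\{v_n'\}$, and then identify its limit as the distributional derivative of $v$ using a rescaled test function.

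First, I would note that since $1<q<\infty$, the space $L^q(r^{d-1},(0,R))$ is reflexive (it is isometric to the standard $L^q$ of the measure $r^{d-1}\,dr$). The boundedness of $v_n'$ together with the Banach--Alaoglu theorem then produces a subsequence (still denoted $v_n'$) and a limit function $g \in L^q(r^{d-1},(0,R))$ such that $v_n' \rightarrow g$ weakly in $L^q(r^{d-1},(0,R))$.

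The key step is to identify $g$ with the distributional derivative of $v$. Fix $\varphi \in C_0^\infty(0,R)$. Since $\supp\varphi \Subset (0,R)$, the function $r$ is bounded away from $0$ on $\supp\varphi$, so both $\varphi r^{-(d-1)}$ and $\varphi' r^{-(d-1)}$ are bounded and compactly supported in $(0,R)$, hence lie in the dual space $L^{q'}(r^{d-1},(0,R))$. Rewriting the definition of the distributional derivative of $v_n$ as
\[
\int_0^R v_n' \,(\varphi r^{-(d-1)})\, r^{d-1}\,dr = -\int_0^R v_n \,(\varphi' r^{-(d-1)})\, r^{d-1}\,dr,
\]
and passing to the limit using the two weak convergences in $L^q(r^{d-1},(0,R))$, I obtain
\[
\int_0^R g\,\varphi\,dr = -\int_0^R v\,\varphi'\,dr
\]
for every $\varphi \in C_0^\infty(0,R)$. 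Thus $v' = g$ in the distributional sense, so $v \in W^{1,q}(r^{d-1},(0,R))$ and $v_n' \rightarrow v'$ weakly along the chosen subsequence.

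The only subtle point is the technical check that $\varphi r^{-(d-1)}$ is an admissible dual element despite the singular weight at the origin; this works precisely because $\varphi$ has compact support inside the open interval $(0,R)$. Everything else is a direct application of reflexivity and the duality pairing for weighted $L^q$-spaces, so no further difficulty is expected.
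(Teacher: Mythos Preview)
Your proposal is correct and follows essentially the same approach as the paper: extract a weak limit $g$ of $v_n'$ by reflexivity, then identify $g=v'$ by writing $\int g\varphi\,dr=\int g\,(\varphi r^{-(d-1)})\,r^{d-1}\,dr$ and passing to the limit via the weighted duality pairing. The paper's computation is line-for-line the same, and your explicit remark that $\varphi r^{-(d-1)}$ and $\varphi' r^{-(d-1)}$ lie in $L^{q'}(r^{d-1},(0,R))$ because $\supp\varphi\Subset(0,R)$ is exactly the point that makes this work.
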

\begin{proof}
Since $v_{n}^{\prime}$ is bounded in $L^{q}(r^{d-1},(0,R))$, there
is $g\in L^{q}(r^{d-1},(0,R))$ such that $v_{n}^{\prime}\rightarrow g$
in $L^{q}(r^{d-1},(0,R))$ weakly up to a subsequence (see e.g.\ \cite[p126]{yosida80}).
Let $\varphi\in C_{0}^{\infty}(0,R)$. Then
\begin{align*}
\int_{0}^{R}g\varphi\d r=\int_{0}^{R}g\frac{\varphi}{r^{d-1}}r^{d-1}\d r & =\lim_{n\rightarrow\infty}\int_{0}^{R}v_{n}^{\prime}\frac{\varphi}{r^{d-1}}r^{d-1}\d r\\
 & =\lim_{n\rightarrow\infty}\int_{0}^{R}v_{n}^{\prime}\varphi\d r\\
 & =\lim_{n\rightarrow\infty}-\int_{0}^{R}v_{n}\varphi^{\prime}\d r\\
 & =-\int_{0}^{R}v\varphi^{\prime}\d r.
\end{align*}
Hence $g\in L^{q}(r^{d-1},(0,R))$ is the distributional derivative
of $v$, as desired.
\end{proof}
\begin{lem}
\label{lem:max is in H1q}If $v\in W^{1,q}(r^{d-1},(0,R))$, then
$v_{+}:=\max(v,0)\in W^{1,q}(r^{d-1},(0,R))$ with
\[
(v_{+})^{\prime}=\begin{cases}
v^{\prime} & \text{a.e.\ in }\left\{ r\in(0,R):v>0\right\} ,\\
0 & \text{a.e.\ in }\left\{ r\in(0,R):v\leq0\right\} .
\end{cases}
\]
\end{lem}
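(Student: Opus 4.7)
The plan is to reduce the weighted statement to the classical (unweighted) Sobolev case by localizing away from the origin. Recall the observation made in Section~2: for any $\delta\in(0,R)$, the restriction $v_{|(\delta,R)}$ belongs to the usual Sobolev space $W^{1,q}(\delta,R)$, because the weight $r^{d-1}$ is bounded between the positive constants $\delta^{d-1}$ and $R^{d-1}$ on that interval, so the weighted and unweighted $L^q$ norms on $(\delta,R)$ are equivalent.

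First I would apply the classical chain rule for Sobolev functions (see e.g.\ \cite{brezis2011}) to obtain $(v_{|(\delta,R)})_{+}\in W^{1,q}(\delta,R)$ together with the pointwise a.e.\ identity
\[
(v_{|(\delta,R)})_{+}'= v'\chi_{\{v>0\}}\quad\text{a.e.\ in }(\delta,R).
\]
Since this holds for every $\delta>0$, the function $g:(0,R)\to\mathbb{R}$ defined by $g:=v'\chi_{\{v>0\}}$ is well-defined almost everywhere on $(0,R)$ and is independent of $\delta$.

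Next I would check integrability. Since $0\le v_{+}\le |v|$ and $|g|\le |v'|$ a.e.\ in $(0,R)$, we immediately obtain $v_{+},g\in L^{q}(r^{d-1},(0,R))$ from the assumption $v\in W^{1,q}(r^{d-1},(0,R))$. It then remains only to verify that $g$ is the distributional derivative of $v_{+}$ on $(0,R)$. Fix $\varphi\in C_{0}^{\infty}(0,R)$ and choose $\delta>0$ small enough that $\supp\varphi\subset(\delta,R-\delta)$. By the classical integration-by-parts formula in $W^{1,q}(\delta,R)$,
\[
\int_{0}^{R}g\,\varphi\d r=\int_{\delta}^{R}(v_{|(\delta,R)})_{+}'\,\varphi\d r=-\int_{\delta}^{R}(v_{|(\delta,R)})_{+}\,\varphi'\d r=-\int_{0}^{R}v_{+}\,\varphi'\d r,
\]
which is exactly the required distributional identity. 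Hence $v_{+}\in W^{1,q}(r^{d-1},(0,R))$ with $(v_{+})'=g$, finishing the proof.

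This proof is essentially routine: there is no real obstacle because the weight $r^{d-1}$ only degenerates at $r=0$, whereas the argument involves only test functions supported strictly inside $(0,R)$ where the weight is bounded above and below. The only point worth care is noting that the a.e.\ formula for $(v_{|(\delta,R)})_{+}'$ is independent of $\delta$, which is clear since both sides only depend on the restriction of $v$ to $\{v>0\}\cap(\delta,R)$.
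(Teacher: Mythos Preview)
Your proof is correct and follows essentially the same route as the paper: both arguments localize to an interval compactly contained in $(0,R)$, where the weight is bounded above and below so that the restriction lies in the ordinary Sobolev space, apply the classical result for $(v_{|I})_{+}$ there, and then observe that the resulting derivative satisfies the required distributional identity for every $\varphi\in C_{0}^{\infty}(0,R)$ and lies in $L^{q}(r^{d-1},(0,R))$ by comparison with $|v'|$. Your write-up is slightly more explicit about the integrability of $v_{+}$ itself, but the structure is the same.
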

\begin{proof}
Let $\varphi\in C_{0}^{\infty}(0,R)$ with $\supp\varphi\subset I$,
where $I\Subset(0,R)$ is an interval. Since the restriction $v_{|I}$
is in the standard Sobolev space $W^{1,q}(I)$, we have also $(v_{|I})_{+}\in W^{1,q}(I)$
and
\[
((v_{|I})_{+})^{\prime}=\begin{cases}
v^{\prime} & \text{a.e.\ in }\left\{ r\in I:v>0\right\} ,\\
0 & \text{a.e.\ in }\left\{ r\in I:v\leq0\right\} .
\end{cases}
\]
Therefore 
\[
\int_{0}^{R}v_{+}\varphi^{\prime}\d r=\int_{I}(v_{|I})_{+}\varphi^{\prime}\d r=-\int_{I}((v_{|I})_{+})^{\prime}\varphi\d r=\int_{0}^{R}(v_{+})^{\prime}\varphi\d r.
\]
Since clearly $(v_{+})^{\prime}\in L^{q}(r^{d-1},(0,R))$, it follows
that $v\in W^{1,q}(r^{d-1},(0,R))$.
\end{proof}
\bibliographystyle{alpha}

\Addresses
\end{document}